\documentclass{article}

\usepackage{latexsym}
\usepackage{amsmath, amsfonts, amsthm}
\usepackage{epsfig}
\usepackage{stmaryrd}
\usepackage{multicol}
\usepackage{pdfsync}
\usepackage{dsfont}
\usepackage{comment}
\usepackage{algorithmic}
\usepackage{algorithm}
\usepackage{caption}
\usepackage{psfrag}
\usepackage{subfig}
\usepackage{xcolor}
\usepackage{graphics}

\usepackage{mathtools}
\DeclarePairedDelimiter\ceil{\lceil}{\rceil}
\DeclarePairedDelimiter\floor{\lfloor}{\rfloor}

\newcommand{\la}{\left \langle}
\newcommand{\ra}{\right\rangle}

\newcommand{\norm}[1]{\left\lVert #1 \right\rVert}



\newtheorem{theorem}{Theorem}[section]
\newtheorem{corollary}[theorem]{Corollary}
\newtheorem{lemma}[theorem]{Lemma}

\theoremstyle{definition}

\newtheorem{assumption}[theorem]{Assumption}

\theoremstyle{remark}
\newtheorem{remark}[theorem]{Remark}

\numberwithin{equation}{section}


\topmargin=-1in    
\textheight=9.0in  
\oddsidemargin=0pt 
\textwidth=6.5in   

\title{Mean Field Analysis of Neural Networks: A Central Limit Theorem}
\author{Justin Sirignano\footnote{Department of Industrial \& Systems Engineering, University of Illinois at Urbana Champaign, Urbana, E-mail: jasirign@illinois.edu} \phantom{.}  and Konstantinos Spiliopoulos\footnote{Department of Mathematics and Statistics, Boston University, Boston, E-mail: kspiliop@math.bu.edu}
\thanks{K.S. was partially supported by the National Science Foundation (DMS 1550918)}\\
}

\date{\today}

\begin{document}

\maketitle

\begin{abstract}
We rigorously prove a central limit theorem for neural network models with a single hidden layer. The central limit theorem is proven in the asymptotic regime of simultaneously (A) large numbers of hidden units and (B) large numbers of stochastic gradient descent training iterations. Our result describes the neural network's fluctuations around its mean-field limit. The fluctuations have a Gaussian distribution and satisfy a stochastic partial differential equation. The proof relies upon weak convergence methods from stochastic analysis. In particular, we prove relative compactness for the sequence of processes and uniqueness of the limiting process in a suitable Sobolev space.
\end{abstract}

\section{Introduction}


Neural network models have been used as computational tools in many different contexts including machine learning, pattern recognition, physics, neuroscience and statistical mechanics, see for example \cite{Hertz}.
Neural network models, particularly in machine learning, have achieved immense practical success over the past decade in fields such as image, text, and speech recognition. We mathematically analyze neural networks with a single hidden layer in the asymptotic regime of large network sizes and large numbers of stochastic gradient descent iterations. A law of large numbers was previously proven in \cite{NeuralNetworkLLN}, see also \cite{Montanari,Rotskoff_VandenEijnden2018} for related results. This paper rigorously proves a central limit theorem (CLT) for the empirical distribution of the neural network parameters. The central limit theorem describes the fluctuations of the finite empirical distribution of the neural network parameters around its mean-field limit.

The mean-field limit is a law of large numbers for the empirical measure of the neural network parameters as $N \rightarrow \infty$. It satisfies a deterministic nonlinear partial differential equation. The mean-field limit of course is only accurate in the limit $N \rightarrow \infty$, and the central limit theorem provides a first-order correction in $N$. The central limit theorem quantifies the fluctuations of the finite $N$ empirical measure around its mean-field limit. It satisfies a linear stochastic partial differential equation (SPDE) driven by a Gaussian process. In particular, our result shows that the trained neural network behaves as $\mu_t^N \approx \bar \mu_t + \frac{1}{\sqrt{N}} \bar \eta_t$ where $\mu_t^N$ is the empirical measure of the parameters for a neural network with $N$ hidden units, $\bar \mu_t$ is the mean-field limit, and $\bar \eta_t$ is the Gaussian correction from the central limit theorem.

The proof requires a linearization of the nonlinear pre-limit evolution equation for the empirical distribution of the neural network parameters. This linearization produces several remainder terms which must be shown to vanish in the limit (similar to a perturbation analysis for PDEs). The SPDE for the CLT $\bar \eta_t$ is linearized around the nonlinear PDE for the mean-field limit $\bar \mu_t$. The CLT SPDE and mean-field limit PDE are therefore coupled. We must also show that the pre-limit evolution equation (which is in discrete time since stochastic gradient descent is a discrete-time algorithm) converges to a continuous-time limit.

The proof relies upon weak convergence analysis for interacting particle systems. The convergence analysis is technically challenging since the fluctuations of the empirical distribution is a signed-measure-valued process and its limit process turns out to be distribution-valued in the appropriate space. Unfortunately, the space of signed measures endowed with the weak topology is in general not metrizable (see \cite{DelBarrio} and \cite{FluctuationSpiliopoulosSirignanoGiesecke} for further discussion of the space of signed measures). We study the convergence of the fluctuations as a process taking values in the dual space of an appropriate Sobolev space. We prove that the pre-limit fluctuation process is relatively compact in that space and that any limit point is unique in that space. In particular, we will use the dual space $W^{-J,2}=W^{-J,2}(\Theta)$ of the Sobolev space $W_0^{J,2}(\Theta)$ with $\Theta$ a bounded subset of the appropriate Euclidean space and where $J$ is sufficiently large; see Section \ref{WeightedSobolevSpace} for a detailed description. Since the pre-limit evolution equation has discrete updates, we study convergence in the Skorokhod space $D_{W^{-J,2}}([0,T])$. ($D_S([0,T])$ is the set of maps from $[0,T]$ into $S$ which are right-continuous and which have left-hand limits.)

Most of the literature on central limit theorems for interacting particle systems considers continuous-time systems, see for example \cite{Meleard,KurtzXiong,FluctuationSpiliopoulosSirignanoGiesecke,Chevallier,Dawson,CometsEisele}. In contrast, in this article the pre-limit process is in discrete time and converges to a continuous-time limit process after an appropriate time rescaling. At a practical level, this shows that the relation between the number of particles (``hidden units" in the language of neural networks) and the number of stochastic gradient steps should be of the same order to have convergence and statistically good behavior. At a more mathematical level, this passage from discrete to continuous time produces a number of additional remainder terms that must be shown to vanish at the correct rate in order for a CLT to hold. We resolve all these issues for one-layer neural network models, rigorously establishing and characterizing the fluctuations limit.

Weak convergence and mean field analysis has been used in many other disciplines, including interacting particle systems in physics, neural networks in biology and financial modeling, see for example \cite{TypicalDefaults}, \cite{LargePortfolio}, \cite{DaiPra1}, \cite{DaiPra2}, \cite{DaiPra3}, \cite{Capponi}, \cite{Hambly}, \cite{Delarue}, \cite{Inglis}, \cite{Moynot}, \cite{Touboul},  \cite{Sompolinsky} and the references therein for a certainly not-complete list.
 Recently, \cite{NeuralNetworkLLN}, \cite{Mattingly}, \cite{Montanari}, and \cite{Rotskoff_VandenEijnden2018} study mean-field limits of machine learning algorithms, including neural networks. In this paper, we rigorously establish a central limit theorem for neural networks trained with stochastic gradient descent. \cite{Rotskoff_VandenEijnden2018} also formally studies corrections to the mean field limit.

Consider the one-layer neural network
\begin{eqnarray}
g_{\theta}^N(x) = \frac{1}{N} \sum_{i=1}^N c^i \sigma( w^i \cdot x),\label{Eq:NN}
\end{eqnarray}
where for every $i\in\{1,\cdots, N\}$, $c^i \in \mathbb{R}$ and $x, w^i \in \mathbb{R}^{d}$. For notational convenience we shall interpret $w^i \cdot x=\sum_{j=1}^{d}w^{i,j} x^{j}$ as the standard scalar inner product. The neural network model has parameters $\theta = (c^1, \ldots, c^N, w^1, \ldots, w^N)\in\mathbb{R}^{(1+d)N}$, which must be estimated from data.

The neural network (\ref{Eq:NN}) takes a linear function of the original data, applies an element-wise nonlinear operation using the function $\sigma: \mathbb{R} \rightarrow \mathbb{R}$, and then takes another linear function to produce the output. The activation function $\sigma(\cdot)$ is a nonlinear function such as a sigmoid or tanh function. The quantity $\sigma( w^i \cdot x)$ is referred to as the $i$-th ``hidden unit", and the vector $\big{(} \sigma( w^1 \cdot x), \ldots, \sigma( w^N \cdot x) \big{)}$ is called the ``hidden layer". The number of units in the hidden layer is $N$.

The objective function is
\begin{eqnarray}
L(\theta) = \mathbb{E}_{Y,X} [ ( Y - g_{\theta}^N(X) )^2 ],\label{Eq:ObjFunction}
\end{eqnarray}
where the data $(Y,X)$ is assumed to have a  joint distribution $\pi (dx,dy)$. We shall write $\mathcal{X}\subset\mathbb{R}^{d}$ and $\mathcal{Y}\subset\mathbb{R}$ for the state spaces of $X$ and $Y$, respectively. The parameters $\theta = (c^1, \ldots, c^N, w^1, \ldots, w^N)$ are estimated using stochastic gradient descent:
\begin{eqnarray}
c_{k+1}^i &=& c^i_k + \frac{\alpha}{N} (y_k - g_{\theta_k}^N(x_k) )  \sigma (w^i_k \cdot x_k), \notag \\
w^{i,j}_{k+1} &=& w^{i,j}_k + \frac{\alpha}{N} (y_k - g_{\theta_k}^N(x_k) )  c^i_k \sigma' (w^i_k \cdot x_k) x^{j}_k, \quad j=1,\cdots, d,\label{Eq:SGD}
\end{eqnarray}
where $\alpha$ is the learning rate and $(x_k, y_k) \sim \pi(dx,dy)$. Stochastic gradient descent minimizes (\ref{Eq:ObjFunction}) using a sequence of noisy (but unbiased) gradient descent steps $\nabla_{\theta} [ ( y_k - g_{\theta_k}^N(x_k) )^2 ]$. 
Stochastic gradient descent typically converges more rapidly than gradient descent for large datasets. For this reason, stochastic gradient descent is widely used in machine learning.

Define the empirical measure
\begin{eqnarray}
\nu^N_k(dc, dw) = \frac{1}{N} \sum_{i=1}^N \delta_{c_k^i, w_k^i}(dc, dw).\notag
\end{eqnarray}

The neural network's output can be re-written in terms of the empirical measure:
\begin{eqnarray}
g_{\theta_k}^N(x)  = \la c \sigma(w \cdot x),  \nu^N_k \ra.\notag
\end{eqnarray}
$\la f, h \ra$ denotes the inner product of $f$ and $h$. For example, $ \la c \sigma(w \cdot x),  \nu^N_k \ra = \int c \sigma(w \cdot x) \nu^N_k(dc, dw)  $.

The scaled empirical measure is
\begin{eqnarray}
\mu^N_t = \nu^N_{\floor*{N t} }.\notag
\end{eqnarray}

The scaled empirical measure $\mu^N$ is a random element of the Skorokhod space $D_{E}([0,T])$\footnote{$D_S([0,T])$ is the set of maps from $[0,T]$ into $S$ which are right-continuous and which have left-hand limits.} with $E = \mathcal{M}(\mathbb{R}^{1+d})$.

We shall work on a filtered probability space $(\Omega,\mathcal{F},\mathbb{P})$  on which all the random variables are defined. The probability space is equipped with a filtration 
 that is right continuous and contains all $\mathbb{P}$-null sets.

We impose the following conditions.
\begin{assumption} \label{A:Assumption1} We have that
\begin{itemize}
\item The activation function $\sigma\in C^{\infty}_{b}(\mathbb{R})$.
\item The data $(X,Y)\in \mathcal{X}\times\mathcal{Y}$ is compactly supported.
\item The sequence of data samples $(x_k, y_k)$ is i.i.d.
\item The random initialization $(c_0^i, w_0^i)$ is i.i.d, generated from a distribution $\bar{\mu}_{0}$ with compact support.
\end{itemize}
\end{assumption}

\subsection{Law of Large Numbers} \label{LLNintro}

\cite{NeuralNetworkLLN} proves the mean-field limit $\mu^N \overset{p} \rightarrow \bar \mu$ as $N \rightarrow \infty$. The convergence theorems of \cite{NeuralNetworkLLN} are summarized below.

\begin{theorem} \label{TheoremLLN}
 Assume Assumption \ref{A:Assumption1}. The scaled empirical measure $\mu^N_t$ converges in distribution to $\bar \mu_t$ in $D_E([0,T])$ as $N \rightarrow \infty$. For every $f\in C^{2}_{b}(\mathbb{R}^{1+d})$, $\bar \mu$ is the deterministic unique solution of the measure evolution equation
\begin{eqnarray}
\la f, \bar \mu_t \ra  &=& \la f, \bar \mu_0 \ra + \int_0^t   \bigg{(} \int_{\mathcal{X}\times\mathcal{Y}}   \alpha \big{(} y -  \la c' \sigma(w'\cdot x),  \bar \mu_s \ra \big{)} \la \nabla (c\sigma(w \cdot x)) \cdot \nabla f, \bar \mu_s \ra   \pi(dx,dy)\bigg{)} ds,
\label{EvolutionEquationIntroduction}
\end{eqnarray}
where $\nabla f=(\partial_{c}f,\nabla_{w}f)$.
\end{theorem}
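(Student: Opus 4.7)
The plan is to follow the standard three-step strategy for mean-field limits of interacting particle systems, adapted to the discrete-time SGD dynamics: (i) derive a pre-limit evolution equation for $\la f, \mu_t^N\ra$ with $f\in C^2_b(\mathbb{R}^{1+d})$; (ii) establish relative compactness of $\{\mu^N\}_{N\geq 1}$ in $D_E([0,T])$; and (iii) identify any subsequential limit as a solution of the evolution equation and then prove uniqueness.

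For step (i), I would apply a second-order Taylor expansion to $f(c_{k+1}^i, w_{k+1}^i) - f(c_k^i, w_k^i)$ around $(c_k^i, w_k^i)$. Since the SGD increments in \eqref{Eq:SGD} are $O(1/N)$, summing over $i$ and over $k = 0, \ldots, \floor*{Nt}-1$ and rescaling by $\mu^N_t = \nu^N_{\floor*{Nt}}$ gives a decomposition
\[
\la f, \mu_t^N\ra \;=\; \la f, \mu_0^N\ra + A_t^N + M_t^N + R_t^N,
\]
where $A_t^N$ is a Riemann-type sum that, upon conditioning on $(x_k,y_k)\sim\pi$, approximates the integral on the right-hand side of \eqref{EvolutionEquationIntroduction}; $M_t^N$ is the martingale arising from replacing each data sample by its conditional expectation under $\pi$; and $R_t^N$ collects the higher-order Taylor remainders, each of order $1/N^2$ per step. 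Summed over $\floor*{Nt}$ steps, $R_t^N = O(1/N)$, and a direct second-moment computation shows $\mathbb{E}[(M_t^N)^2] = O(1/N)$ because the per-step quadratic variation is $O(1/N^2)$ and there are $O(N)$ steps.

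For step (ii), Assumption \ref{A:Assumption1} (compactness of the initial support of $\bar\mu_0$, of the data, and boundedness of $\sigma$ and its derivatives) gives uniform-in-$N$ and uniform-in-$k\leq \floor*{NT}$ bounds on $(c_k^i,w_k^i)$, so all empirical measures $\mu^N_t$ are supported in a common compact set $K\subset\mathbb{R}^{1+d}$. This confines $\mu^N$ to a compact subset of $\mathcal{M}(\mathbb{R}^{1+d})$ equipped with the weak topology. Together with the oscillation control supplied by the decomposition above, Aldous–Kurtz-type criteria applied to the real-valued processes $\la f,\mu^N\ra$ for $f$ in a countable dense subset of $C_b(\mathbb{R}^{1+d})$ yield tightness of $\{\mu^N\}$ in $D_E([0,T])$.

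For step (iii), I would pass to a weakly convergent subsequence $\mu^{N_k} \Rightarrow \bar\mu$ and use continuity of $\mu \mapsto \la c'\sigma(w'\cdot x),\mu\ra$ and $\mu \mapsto \la \nabla(c\sigma(w\cdot x))\cdot\nabla f,\mu\ra$ on measures supported in $K$ (which holds because the integrands are bounded continuous on $K$) to identify any limit point as a solution of \eqref{EvolutionEquationIntroduction}. The main obstacle is uniqueness: the right-hand side is nonlinear in $\bar\mu_s$, combining the two $\la\cdot,\bar\mu_s\ra$ factors quadratically. My approach would be to write a Grönwall estimate on the bounded-Lipschitz or dual-Sobolev distance between two solutions; since both solutions are supported in the same compact $K$ (inherited from the a priori bounds) and the test functions $c\sigma(w\cdot x)$ and $\nabla(c\sigma(w\cdot x))\cdot\nabla f$ are Lipschitz on $K$, the drift is Lipschitz in $\bar\mu$ in this metric uniformly in $s\in[0,T]$. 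Grönwall's inequality then forces the two solutions to agree, which together with the relative compactness in (ii) upgrades subsequential convergence to convergence of the full sequence.
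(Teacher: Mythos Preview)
The paper does not actually prove Theorem~\ref{TheoremLLN}; it is stated as a summary of the companion paper \cite{NeuralNetworkLLN} (see the sentence immediately preceding the theorem: ``The convergence theorems of \cite{NeuralNetworkLLN} are summarized below''). Your three-step outline---Taylor expansion to obtain a drift/martingale/remainder decomposition, compact containment plus Aldous--Kurtz tightness, and identification/uniqueness via Lipschitz-in-measure estimates and Gr\"onwall---is the standard route and is indeed the one taken in \cite{NeuralNetworkLLN}; the very decomposition you describe in step~(i) reappears verbatim in Section~\ref{PreliminaryCalculations} of the present paper (equation~(\ref{MuEqn})) as the starting point for the CLT analysis. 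So your proposal is correct and matches the approach of the cited proof.
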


\begin{remark}\label{R:ProbConvergence}
Since weak convergence to a constant implies convergence in probability, Theorem \ref{TheoremLLN} leads to the stronger result of convergence in probability
\begin{equation*}\label{E:mulimit}
  \lim_{N\to \infty}\mathbb{P}\left\{ d_{E}(\mu^N,\bar \mu)\ge \delta\right\} = 0
  \end{equation*}
for every $\delta>0$ and where $d_E$ is the metric for $D_E([0,T])$.
\end{remark}
\begin{corollary} \label{CorollaryLLN}
 Assume Assumption \ref{A:Assumption1}.  Suppose that $\bar \mu_0$ admits a density $p_0(c,w)$ and that there exists a unique solution to the nonlinear partial differential equation
\begin{eqnarray}
\frac{ \partial p(t, c, w) }{ \partial t}  &=& - \alpha \int_{\mathcal{X}\times\mathcal{Y}}   \bigg{(} \big{(} y -  \la c' \sigma(w' \cdot x), p(t,c', w') \ra \big{)} \frac{\partial}{\partial c} \big{[} \sigma(w \cdot x) p(t,c,w) \big{]} \bigg{)}\pi(dx,dy)  \notag \\
& &\quad- \alpha \int_{\mathcal{X}\times\mathcal{Y}}   \bigg{(} \big{(} y -  \la c' \sigma(w' \cdot x),  p(t,c',w') \ra \big{)} x \cdot \nabla_{w} \big{[}  c \sigma'(w \cdot x)  p(t,c,w) \big{]} \bigg{)}\pi(dx,dy), \notag \\
p(0,c,w) &=& p_0(c,w).\notag
\end{eqnarray}
Then, we have that the solution to the measure evolution equation (\ref{EvolutionEquationIntroduction}) is such that
\begin{eqnarray*}
\bar \mu_t(dc, dw) = p(t, c,w) dc dw. \notag
\end{eqnarray*}
\end{corollary}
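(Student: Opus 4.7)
The plan is to verify that the measure $\bar\mu_t(dc,dw) := p(t,c,w)\,dc\,dw$ satisfies the weak measure evolution equation \eqref{EvolutionEquationIntroduction} on the relevant class of test functions, and then invoke the uniqueness statement already contained in Theorem \ref{TheoremLLN} to identify it with the mean-field limit $\bar\mu_t$.

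First I would fix any test function $f \in C^2_b(\mathbb{R}^{1+d})$, multiply the PDE for $p(t,c,w)$ by $f(c,w)$, and integrate over $(c,w)\in\mathbb{R}^{1+d}$. Differentiating under the integral on the left-hand side gives $\frac{d}{dt}\langle f,\bar\mu_t\rangle$, while on the right-hand side Fubini exchanges the $(x,y)$ and $(c,w)$ integrations. The two terms on the right-hand side of the PDE are in conservative form, namely a $\partial_c$-derivative of $\sigma(w\cdot x)p$ and a $\nabla_w$-divergence of $c\sigma'(w\cdot x)x\,p$. I would then integrate by parts in $(c,w)$ to move these derivatives onto $f$, yielding
\begin{align*}
\frac{d}{dt}\langle f, \bar\mu_t\rangle
&= \alpha\!\!\int_{\mathcal{X}\times\mathcal{Y}}\!\!\big(y-\langle c'\sigma(w'\cdot x),\bar\mu_t\rangle\big)\!\!\int\!\!\big[\sigma(w\cdot x)\partial_c f + c\sigma'(w\cdot x)\, x\cdot\nabla_w f\big]p(t,c,w)\,dc\,dw\,\pi(dx,dy).
\end{align*}
Since $\nabla\!\big(c\sigma(w\cdot x)\big) = (\sigma(w\cdot x),\, c\sigma'(w\cdot x)x)$, the inner bracket is exactly $\nabla(c\sigma(w\cdot x))\cdot\nabla f$, so integrating in time from $0$ to $t$ and using $\bar\mu_0(dc,dw) = p_0(c,w)\,dc\,dw$ by assumption recovers precisely \eqref{EvolutionEquationIntroduction}. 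Thus $p(t,c,w)\,dc\,dw$ solves the measure evolution equation, and the uniqueness asserted in Theorem \ref{TheoremLLN} forces $\bar\mu_t(dc,dw) = p(t,c,w)\,dc\,dw$.

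The one nontrivial point is the justification of the integration by parts, i.e. the absence of boundary contributions at infinity. Under Assumption \ref{A:Assumption1} the data $(X,Y)$ and the initial law $\bar\mu_0$ are compactly supported, $\sigma\in C^\infty_b$, and the drifts in the characteristic ODEs
\[
\dot c = \alpha\!\!\int\!(y-\langle c'\sigma(w'\cdot x),\bar\mu_s\rangle)\sigma(w\cdot x)\,\pi(dx,dy),\qquad
\dot w = \alpha\!\!\int\!(y-\langle c'\sigma(w'\cdot x),\bar\mu_s\rangle)c\,\sigma'(w\cdot x)x\,\pi(dx,dy),
\]
are Lipschitz with at most linear growth; hence a standard Gronwall estimate shows that the support of $\bar\mu_t$ stays inside a compact set $K_t\subset\mathbb{R}^{1+d}$ for every $t\le T$. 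Consequently $p(t,\cdot,\cdot)$ is supported in $K_t$, and both $\sigma(w\cdot x)p$ and $c\sigma'(w\cdot x)x\,p$ vanish outside $K_t$, so the integration by parts carries no boundary terms.

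The main (and essentially only) obstacle is this support/decay justification, since once compact support of $p(t,\cdot,\cdot)$ is in hand the rest is the algebraic identification of the two forms of the equation. If one did not want to rely on compact support, the same conclusion could be reached by a density/approximation argument: approximate $f$ by a compactly supported $C^2$ function, apply the integration by parts on the approximation (which is trivially justified), and pass to the limit using the integrability of $p(t,\cdot,\cdot)$ together with the boundedness of $\sigma$, $\sigma'$ and of $(x,y)$ on their supports.
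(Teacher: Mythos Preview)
The paper does not supply a proof of this corollary; it is simply stated immediately after Theorem \ref{TheoremLLN} as a direct consequence and then the paper moves on. Your argument---verify that $p(t,c,w)\,dc\,dw$ satisfies the weak equation \eqref{EvolutionEquationIntroduction} by multiplying the PDE by a test function, integrating by parts, and then invoking the uniqueness in Theorem \ref{TheoremLLN}---is exactly the standard way to establish such a statement, and it is correct.

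One small wording issue: in your compact-support discussion you refer to ``the support of $\bar\mu_t$'' to justify that $p(t,\cdot,\cdot)$ is compactly supported, but at that stage $\bar\mu_t$ is \emph{defined} as $p(t,c,w)\,dc\,dw$, so the argument should be phrased directly in terms of $p$. The characteristic-ODE/Gronwall reasoning you give applies equally well to $p$ itself (the PDE is a nonlinear transport equation whose characteristics have bounded velocity under Assumption \ref{A:Assumption1}), so the conclusion stands; just make clear you are arguing for $p$ rather than appealing to properties of the limit measure you have not yet identified. Your fallback via compactly supported approximations of $f$ is also perfectly valid and sidesteps this entirely.
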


\subsection{Main Result: A Central Limit Theorem} \label{CLTintro}

In this paper, we prove a central limit theorem for one-layer neural networks as the size of the network and the number of training steps become large. The central limit theorem quantifies the speed of convergence of the finite neural network to its mean-field limit as well as how the finite neural network fluctuates around the mean-field limit for large $N$.

Define the fluctuation process
\begin{eqnarray}
\eta_t^N = \sqrt{N} ( \mu_t^N - \bar \mu_t ).\notag
\end{eqnarray}

We prove that $\eta^N \overset{d} \rightarrow \bar \eta$, where $\bar \eta$ satisfies a stochastic partial differential equation. This result characterizes the fluctuations of the finite empirical measure $\mu^N$ around its mean-field limit $\bar \mu$ for large $N$. The limit $\bar \eta$ has a Gaussian distribution. We study the convergence of $\eta_t^N$ in the space $D_{W^{-J,2} }([0,T])$, where $W^{-J,2}=W^{-J,2}(\Theta)$ is the dual of the Sobolev space $W_0^{J,2}(\Theta)$ with $\Theta\subset \mathbb{R}^{1+d}$ a  bounded domain. These spaces are described in detail in Section \ref{WeightedSobolevSpace}.

\begin{theorem} \label{MainTheoremCLT}
Assume Assumption \ref{A:Assumption1} and let $J \geq 3  \ceil*{\frac{d+1}{2}} + 7$.  Let $0<T<\infty$ be given. The sequence $\{\eta^{N}_{t},t\in[0,T]\}_{N\in\mathbb{N}}$ is relatively compact in $D_{W^{-J,2}}([0,T])$. The sequence of processes $\{\eta^{N}_{t},t\in[0,T]\}_{N\in\mathbb{N}}$  converges in distribution in $D_{W^{-J,2}}([0,T])$ to the process $\{\bar{\eta}_{t},t\in[0,T]\}$, which, for every $f \in W_0^{J,2}(\Theta)$,  satisfies the stochastic partial differential equation
\begin{eqnarray}
\la f, \bar \eta_t \ra  &=& \la f, \bar \eta_0 \ra +    \int_0^t \int_{\mathcal{X}\times\mathcal{Y}} \alpha \big{(} y -  \la c \sigma(w \cdot x), \bar \mu_{s} \ra \big{)} \la \nabla(c\sigma(w \cdot x)) \cdot \nabla f, \bar \eta_{s} \ra \pi(dx,dy)  ds \notag \\
& &-  \int_0^t   \int_{\mathcal{X}\times\mathcal{Y}}  \alpha \la c \sigma(w \cdot x), \bar \eta_s \ra \la \nabla(c\sigma(w \cdot x))\cdot \nabla f, \bar \mu_{s} \ra \pi(dx,dy)  ds +\la f, \bar M_t \ra.
\label{SPDEmain}
\end{eqnarray}

$\bar M_t$ is a mean-zero Gaussian process; see Lemma \ref{VarianceOfBarM} for its covariance structure. Finally, the stochastic evolution equation (\ref{SPDEmain}) has a unique solution in $W^{-J,2}$, which implies that $\bar{\eta}$ is unique.
\end{theorem}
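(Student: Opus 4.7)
The plan is the standard weak convergence program for measure-valued processes: prove (i) relative compactness of $\{\eta^N\}$ in $D_{W^{-J,2}}([0,T])$, (ii) uniqueness of the limiting SPDE (\ref{SPDEmain}) in that space, and (iii) identify every limit point as the unique solution. Steps (i) and (ii) together upgrade subsequential convergence to convergence of the full sequence.

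First I would derive a pre-limit evolution equation for $\eta^N_t$. Testing against $f\in W_0^{J,2}(\Theta)$ and using the SGD recursion (\ref{Eq:SGD}), I would Taylor expand $f(c^i_{k+1},w^i_{k+1})$ around $(c^i_k,w^i_k)$ to second order. Summed over $i$ and over $k=0,\ldots,\floor*{Nt}-1$, the first-order term reproduces (up to conditional expectation over $(x_k,y_k)$) the drift of the mean-field PDE, while the second-order term contributes $O(N^{-1})$ per step, hence $O(1)$ after $O(N)$ steps and $\sqrt N$ scaling. Subtracting the mean-field equation satisfied by $\bar\mu$ from the pre-limit equation for $\mu^N$, multiplying by $\sqrt N$, and linearizing via
\[
(y-\la c\sigma(w\cdot x),\mu^N_s\ra) = (y-\la c\sigma(w\cdot x),\bar\mu_s\ra) - N^{-1/2}\la c\sigma(w\cdot x),\eta^N_s\ra
\]
produces an identity of the form
\begin{align*}
\la f,\eta^N_t\ra = \la f,\eta^N_0\ra &+ \int_0^t \int_{\mathcal{X}\times\mathcal{Y}} \alpha\big(y-\la c\sigma(w\cdot x),\bar\mu_s\ra\big)\la \nabla(c\sigma(w\cdot x))\cdot\nabla f,\eta^N_s\ra \pi(dx,dy)\,ds \\
&- \int_0^t \int_{\mathcal{X}\times\mathcal{Y}} \alpha\la c\sigma(w\cdot x),\eta^N_s\ra \la \nabla(c\sigma(w\cdot x))\cdot\nabla f,\bar\mu_s\ra \pi(dx,dy)\,ds \\
&+ \la f, M^N_t\ra + \la f, R^N_t\ra,
\end{align*}
where $M^N$ is the martingale built from centered single-step SGD increments and $R^N$ collects all Taylor, discretization, and sample-vs-expectation residuals.

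Next I would establish uniform moment bounds $\sup_N \mathbb{E} \sup_{t\le T}\|\eta^N_t\|_{-J,2}^2 <\infty$ together with $\mathbb{E}\sup_{t\le T}\|R^N_t\|_{-J,2}^2 = o(1)$. The key analytic input is the Sobolev embedding $W_0^{J,2}(\Theta)\hookrightarrow C^k_b(\Theta)$ for $k$ as large as the highest derivative of $f$ appearing in the evolution equation and in the quadratic variation of $M^N$; the lower bound $J\ge 3\ceil*{(d+1)/2}+7$ is exactly what is required so that every inner product above is controlled by $\|f\|_{J,2}$ (the factor $3$ accounts for the additional derivatives that composing with $c\sigma(w\cdot x)$ and taking gradients introduces, and the $+7$ absorbs the second-order Taylor terms plus the Sobolev-embedding loss). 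Compactness of the data support and of the initial distribution, together with $\sigma\in C^{\infty}_b$, propagates uniform-in-$N$ boundedness of the trajectories $(c^i_k,w^i_k)$ on $[0,T]$ and hence of all coefficients. For the martingale, each single-step increment has conditional variance $O(N^{-2})\|f\|^2_{C^1}$, so $\la M^N(f)\ra_t=O(N^{-1})\cdot \floor*{Nt}\cdot \|f\|^2_{J,2}$, which after the $\sqrt N$ scaling is $O(1)$. Relative compactness in $D_{W^{-J,2}}([0,T])$ then follows from an Aldous-Kurtz criterion (as in \cite{FluctuationSpiliopoulosSirignanoGiesecke}): pointwise tightness of $\eta^N_t$ in $W^{-J,2}$ plus the modulus-of-continuity estimate $\mathbb{E}\la f,\eta^N_{\tau+\delta}-\eta^N_\tau\ra^2 \to 0$ uniformly in stopping times $\tau\le T$ as $\delta\to 0$, both immediate from the pre-limit equation.

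Finally, along any weakly convergent subsequence $\eta^{N_k}\Rightarrow \bar\eta$, I would pass to the limit termwise. The deterministic drift terms converge by continuity of the coefficients and by $\mu^N\to\bar\mu$ from Theorem \ref{TheoremLLN}; $R^N$ vanishes by the $o(1)$ bound above; $M^N$ converges to a continuous mean-zero Gaussian process $\bar M$ by a martingale CLT applied to the triangular array of single-step increments, with covariance read off from $\lim_N \la M^N(f)\ra_t$ (this is the content of Lemma \ref{VarianceOfBarM}). Uniqueness of (\ref{SPDEmain}) is proven by linearity: if $\bar\eta^1,\bar\eta^2$ are two solutions with the same initial datum and driven by the same $\bar M$, subtracting and applying Gronwall to $\|\bar\eta^1_t-\bar\eta^2_t\|_{-J,2}^2$ (using boundedness of $\bar\mu$ and of the coefficients on $[0,T]$) forces coincidence. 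The main technical obstacle is showing that the cumulative residuals $R^N$ vanish in $W^{-J,2}$: the discrete-to-continuous-time passage generates several genuinely distinct error types (second-order Taylor residue, sample-vs-expectation error in the drift, Riemann-sum-vs-integral error), each of which must be bounded in the correct $W^{-J,2}$-norm to order $o(N^{-1/2})$ to survive the $\sqrt N$ amplification, and this is precisely where the sharp Sobolev threshold on $J$ is consumed.
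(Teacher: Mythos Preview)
Your overall program matches the paper's: derive the pre-limit evolution equation, prove relative compactness, identify the limit via a martingale CLT, and prove uniqueness by Gronwall. However, there is a genuine gap in your derivation of the uniform bound $\sup_N \mathbb{E}\sup_{t\le T}\|\eta^N_t\|^2_{-J}<\infty$, which is the backbone of both compact containment and the regularity estimate.

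You write that Sobolev embedding ensures ``every inner product above is controlled by $\|f\|_{J,2}$,'' but that alone does not close a Gronwall loop in $\|\cdot\|_{-J}$. Squaring $\la f_a,\eta^N_t\ra$ for an orthonormal basis $\{f_a\}$ of $W_0^{J,2}$ and summing over $a$ produces, from the drift, the cross term
\[
\sum_{a}\la f_a,\eta^N_s\ra\,\la \nabla(c\sigma(w\cdot x))\cdot\nabla f_a,\eta^N_s\ra \;=\; \la \eta^N_s,\mathcal{G}^{\ast}\eta^N_s\ra_{-J},
\]
with $\mathcal{G} f=\nabla(c\sigma(w\cdot x))\cdot\nabla f$ a first-order operator. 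A naive duality bound yields $\|\mathcal{G} f_a\|_{J}\|\eta^N_s\|_{-J}$, but $\|\mathcal{G} f_a\|_J$ involves $J{+}1$ derivatives of $f_a$ and is not summable in $a$; bounding instead in a weaker index $J'<J$ returns $\|\eta^N_s\|_{-J'}$, a stronger norm, and the loop still does not close. The paper handles this with a commutator-type estimate (Lemma~\ref{LemmaBoundPsiG}, proven via the Riesz isomorphism and the integration-by-parts Lemma~\ref{LemmaBoundPsi}): writing $\Psi=F(\Xi)\in W_0^{J,2}$ for the Riesz representative, one shows $|\la \mathcal{G}\Psi,\Psi\ra_J|\le C\|\Psi\|_J^2=C\|\Xi\|_{-J}^2$ by moving the extra $\partial_w$ off $\Psi$ through integration by parts on $\Theta$. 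This is the missing ingredient, and it is also the reason the coefficients must be multiplied by a $C_0^\infty$ bump function supported well inside $\Theta$ (so that the boundary terms in the integration by parts vanish).

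A secondary point: the paper does not run Gronwall on $\eta^N$ directly. It introduces an auxiliary i.i.d.\ particle system $(\tilde c^i_t,\tilde w^i_t)$ driven by $\bar\mu$, sets $\tilde\mu^N=\frac1N\sum\delta_{(\tilde c^i,\tilde w^i)}$, and decomposes $\eta^N=\Xi^N+Z^N$ with $\Xi^N=\sqrt N(\mu^N-\tilde\mu^N)$ and $Z^N=\sqrt N(\tilde\mu^N-\bar\mu)$. The piece $Z^N$ is an i.i.d.\ fluctuation handled by elementary second-moment bounds; the piece $\Xi^N$ starts at zero and has an evolution in which the derivative-losing term is isolated and controlled by the lemma above. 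This decomposition (or an equivalent device) is what lets you separately treat the initial fluctuation and close the Gronwall estimate; your proposal does not indicate any such mechanism. Your explanation of the threshold $J\ge 3\ceil*{\frac{d+1}{2}}+7$ is also off: the factor~$3$ comes from cascading Hilbert--Schmidt embeddings $W_0^{J,2}\hookrightarrow W_0^{J_1+1,2}\hookrightarrow W_0^{L,2}$ needed at successive stages (uniform bound, then regularity, then compact containment), not from derivative counting in the coefficients.
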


The CLT SPDE (\ref{SPDEmain}) is coupled with the mean-field limit PDE (\ref{EvolutionEquationIntroduction}). (\ref{EvolutionEquationIntroduction}) is a deterministic nonlinear PDE while (\ref{SPDEmain}) is a stochastic linear PDE. The SPDE (\ref{SPDEmain}) is linear in $\bar{\eta}$ and driven by a Gaussian process; therefore, the limiy $\bar \eta_t$ itself is a Gaussian process.

Theorem \ref{MainTheoremCLT} indicates that for large $N$ the empirical distribution of the neural network's parameters behaves as
\begin{eqnarray}
\nu^{N}_{\floor*{N \cdot} }=\mu^{N}_{\cdot}\approx \bar{\mu}_{\cdot}+\frac{1}{\sqrt{N}}\bar{\eta}_{\cdot},\notag
\end{eqnarray}
where $\bar \eta$ has a Gaussian distribution. Combined, Theorems \ref{TheoremLLN}  and \ref{MainTheoremCLT}  show that the relation between the number of particles ("hidden units" in the language of neural networks) and the number of stochastic gradient steps should be of the same order to have convergence and statistically good behavior. Under this scaling, as a measure valued process, the empirical distribution of the parameters behaves as a Gaussian distribution with specific variance-covariance structure (as indicated by Theorem \ref{MainTheoremCLT}).

\subsection{Outline of Paper} \label{Outline}
In Section \ref{WeightedSobolevSpace} we present the Sobolev spaces with respect to which  convergence is studied. The pre-limit evolution equation for the fluctuation process $\eta^N$ is derived in Section \ref{PreliminaryCalculations}. Section \ref{RelativeCompactness} proves relative compactness. Section \ref{Identification} derives the limiting SPDE (\ref{SPDEmain}). Uniqueness of the SPDE (\ref{SPDEmain}) is proven in Section \ref{Uniqueness}. Section \ref{ProofOfMainResult} collects these results and proves Theorem \ref{MainTheoremCLT}. Conclusions are in Section \ref{Conclusion}.

\section{Sobolev Spaces} \label{WeightedSobolevSpace}

We study convergence in a Sobolev space \cite{Adams}. Weighted Sobolev spaces have been previously used to study central limit theorems of mean field systems in papers such as \cite{Meleard}, \cite{KurtzXiong} and \cite{FluctuationSpiliopoulosSirignanoGiesecke}. Weights are not necessary in this paper since $\eta_t^N$ and $\mu_t^N$ are compactly supported uniformly with respect to $N\in\mathbb{N}$ and $t\in[0,T]$ (see Lemma \ref{CompactLemmaXi}).

Let $\Theta \subset \mathbb{R}^D$ be a bounded domain with $D = d + 1$. For any integer $J\in\mathbb{N}$, consider the space of real valued functions $f$ with partial derivatives up to order $J$ which satisfy
\begin{eqnarray}
\norm{f}_J = \bigg{(} \sum_{|k| \leq J} \int_{\Theta}  \big{|}  D^k f(x) \big{|}^2 dx\bigg{)}^{1/2} < \infty.\notag
\end{eqnarray}

Define  the  space $W_0^{J,2}(\Theta)$ as the closure of functions of class $C_0^{\infty}(\Theta)$ in the norm defined above. $C_0^{\infty}(\Theta)$ is the space of all functions in $C^{\infty}(\Theta)$ with compact support. (The space $W_0^{J,2}(\Theta)$ is frequently also denoted by $H^{J}_{0}(\Theta)$ in the literature.) $W_0^{J,2}(\Theta)$ is a Hilbert space (see Theorem 3.5 and Remark 3.33 in \cite{Adams}) and has the inner product
\begin{eqnarray}
\la f, g \ra_{J} =  \sum_{|k| \leq J} \int_{\Theta} D^k f(x) D^k g(x) dx.\notag
\end{eqnarray}

When $J=0$, we write $\la f, g \ra_{0}=\la f, g \ra$. $W^{-J,2}(\Theta)$ denotes the dual space of $W_0^{J,2}(\Theta)$ that is equipped with the norm
\begin{eqnarray}
\norm{f}_{-J} = \sup_{g \in W_0^{J,2}(\Theta)} \frac{ \big{|} \la f, g \ra \big{|} }{ \norm{g}_J}.\notag
\end{eqnarray}

We will study convergence in the Sobolev space corresponding to $J \geq 3 \ceil*{\frac{D}{2}} + 7$. From Lemma \ref{CompactLemmaXi}, we have that $\mu_t^N$ and $\eta_t^N$ are compactly supported. In particular, there exists a compact set $K=[-C_{o},C_{o}]^{D}\subset \mathbb{R}^{D}$  such that $\mu_t^N$ and $\eta_t^N$ vanish outside the compact set $K$ for every $N\in\mathbb{N}$ and $t\in[0,T]$.  We choose $\Theta = (-B, B)^{D}$ where $B =  3 \sqrt{D} C_{o}$. Note that $C_{o}$, and thus the domain $\Theta$, may depend upon fixed parameters of the problem such that $T$, $\alpha$, $\pi(dx,dy)$, and $\bar \mu_0$, but what is important is that the bounded set $\Theta$ is fixed and does not change with $N\in\mathbb{N}$ or $t\in[0,T]$.

Sometimes, we may write for simplicity $W^{-J,2}$ in place of $W^{-J,2}(\Theta)$ and $W_0^{J,2}$ in place of $W_0^{J,2}(\Theta)$.

\section{Preliminary Calculations} \label{PreliminaryCalculations}

The goal of this section is to write $\la f, \eta^{N}_{t}\ra$, with $\eta^{N}_{t}$ being the fluctuation process and $f \in C^{2}_{b}(\mathbb{R}^{1+d})$ a test function, in a way that allows us to take limits. In particular, our goal is to describe the evolution of $\la f, \eta^{N}_{t}\ra$ in terms of the equation (\ref{EtaEqn1}). In order to do this, we need some preliminary computations.

We consider the evolution of the empirical measure $\nu^N_k$ via test functions $f \in C^{2}_{b}(\mathbb{R}^{1+d})$. A Taylor expansion yields
\begin{align}
\la f , \nu^N_{k+1} \ra - \la f , \nu^N_k \ra &= \frac{1}{N} \sum_{i=1}^N \bigg{(} f(c^i_{k+1}, w^i_{k+1} ) -  f(c^i_{k}, w^i_{k} )  \bigg{)} \notag \\
&= \frac{1}{N} \sum_{i=1}^N \partial_c f(c^i_{k}, w^i_{k} ) ( c^i_{k+1} -  c^i_{k} )  + \frac{1}{N} \sum_{i=1}^N \nabla_w  f(c^i_{k}, w^i_{k} )^{\top}  ( w^i_{k+1} -  w^i_{k} ) \notag \\
& \quad +\frac{1}{N} \sum_{i=1}^N \partial^{2}_{c} f(\bar c^i_{k},  \bar w^i_{k} ) ( c^i_{k+1} -  c^i_{k} )^2  + \frac{1}{N} \sum_{i=1}^N ( c^i_{k+1} -  c^i_{k} )\nabla_{cw}  f(\bar c^i_{k}, \bar w^i_{k} )( w^i_{k+1} -  w^i_{k} )    \notag \\
&  \quad+\frac{1}{N} \sum_{i=1}^N ( w^i_{k+1} -  w^i_{k} )^{\top}\nabla^{2}_{w} f(\bar c^i_{k}, \bar w^i_{k} ) ( w^i_{k+1} -  w^i_{k} ),\label{FirstEqn}
\end{align}
for points $\bar c^{i}_{k}, \bar w^{i}_{k}$ in the segments connecting $c^i_{k+1}$ with $c^i_{k}$ and  $w^i_{k+1}$ with $w^i_{k}$, respectively.  Under the compactness part of Assumption \ref{A:Assumption1}, the results of \cite{NeuralNetworkLLN} imply that the parameters are uniformly bounded (in both $0 \leq k \leq N T$ and $N$):
\begin{eqnarray}
| c_k^i | + \norm{w_k^i} < C_{o}.
\label{UniformBoundfromLLNpaper}
\end{eqnarray}

We shall also denote by  $\mathcal{F}_k^N$ to be the $\sigma-$algebra generated by $(c^{i}_{0},w^{i}_{0})_{i=1}^{N}$ and $(x_{j}, y_{j})_{j=0}^{k-1}$.
Using the relation (\ref{Eq:SGD}), equation (\ref{FirstEqn}) becomes
\begin{eqnarray}
\la f , \nu^N_{k+1} \ra - \la f , \nu^N_k \ra &=&  \frac{1}{N^2} \sum_{i=1}^N \partial_c f(c^i_{k}, w^i_{k} )  \alpha (y_k - g_{\theta_k}^N(x_k) )  \sigma (w^i_k \cdot x_k)   \notag \\
& &+ \frac{1}{N^2} \sum_{i=1}^N   \alpha (y_k - g_{\theta_k}^N(x_k) )  c^i_k \sigma' (w^i_k \cdot x_k) \nabla_w  f(c^i_{k}, w^i_{k} )\cdot x_{k} + \frac{G_k^N}{N^2}. \notag
\end{eqnarray}
where $\frac{G_k^N}{N^2}$ is an $O\left(N^{-2}\right)$ term with
\begin{eqnarray}
G_k^N &=& N^2 \bigg{(} \frac{1}{N} \sum_{i=1}^N \partial^{2}_{c} f(\bar c^i_{k},  \bar w^i_{k} ) ( c^i_{k+1} -  c^i_{k} )^2  + \frac{1}{N} \sum_{i=1}^N ( c^i_{k+1} -  c^i_{k} )\nabla_{cw}  f(\bar c^i_{k}, \bar w^i_{k} )( w^i_{k+1} -  w^i_{k} )    \notag \\
& &+ \frac{1}{N} \sum_{i=1}^N ( w^i_{k+1} -  w^i_{k} )^{\top}\nabla^{2}_{w} f(\bar c^i_{k}, \bar w^i_{k} ) ( w^i_{k+1} -  w^i_{k} ) \bigg{)}.\notag
\end{eqnarray}
Note that $|G_k^N| < C \displaystyle \sum_{|\alpha| =2 } \sup_{c,w \in K} |D^{\alpha} f(c,w)  |$ due to the uniform bound $| c_k^i| + \norm{w_k^i} < C_{o}$, $(X,Y)$ having compact support, and the relation (\ref{Eq:SGD}). $K \subset \mathbb{R}^{1+d}$ is the compact set $K=[-C_{o},C_{o}]^{1+d}$.

We next define the following components:
\begin{eqnarray}
D^{1,N}_k &=& \frac{1}{N} \int_{\mathcal{X}\times\mathcal{Y}}   \alpha \big{(} y -  \la c \sigma(w \cdot x),  \nu^N_k \ra \big{)} \la \sigma(w \cdot x) \partial_c f, \nu_k^N \ra \pi(dx,dy), \notag \\
D^{2,N}_k &=&  \frac{1}{N} \int_{\mathcal{X}\times\mathcal{Y}}   \alpha \big{(} y -  \la c \sigma(w \cdot x),  \nu^N_k \ra \big{)} \la c  \sigma'(w \cdot x) x \cdot \nabla_w f, \nu_k^N \ra  \pi(dx,dy), \notag \\
\la f, M^{1,N}_k  \ra &=&  \frac{1}{N} \alpha \big{(} y_k -  \la c \sigma(w \cdot x_k),  \nu^N_k \ra \big{)} \la \sigma(w \cdot x_k) \partial_c f, \nu_k^N \ra   - D^{1,N}_k, \notag \\
\la f, M^{2,N}_k \ra &=& \frac{1}{N} \alpha \big{(} y_k -  \la c \sigma(w \cdot x_k),  \nu^N_k \ra \big{)} \la c  \sigma'(w \cdot x_k) x \cdot \nabla_w f, \nu_k^N \ra  - D^{2,N}_k.\notag
\end{eqnarray}

Combining the different terms together, we subsequently obtain
\begin{eqnarray*}
\la f , \nu^N_{k+1} \ra - \la f , \nu^N_k \ra &=&  D^{1,N}_k + D^{2,N}_k + \la f,  M^{1,N}(t) \ra + \la f,  M^{2,N}(t) \ra + O\left(N^{-2}\right).
\end{eqnarray*}

Next, we define the scaled versions of $D^{1,N}, D^{2,N}, M^{1,N}$ and $M^{2,N}$:
\begin{eqnarray}
D^{1,N}(t) &=&  \sum_{k=0}^{ \floor*{N t}-1 } D^{1,N}_k, \qquad D^{2,N}(t) =  \sum_{k=0}^{ \floor*{N t} -1} D^{2,N}_k , \notag \\
\la f, M^{1,N}(t) \ra &=& \sum_{k=0}^{ \floor*{N t} -1 } \la f, M^{1,N}_k \ra, \qquad \la f, M^{2,N}(t) \ra = \sum_{k=0}^{ \floor*{N t} -1} \la f, M^{2,N}_k \ra. \notag
\end{eqnarray}

As it will be demonstrated in Section \ref{SS:CompactContainmentM}, $\la f, M^{1,N}(t) \ra$ and $\la f, M^{2,N}(t) \ra$ are martingale terms. We also define
\begin{eqnarray}
\la f, M^{N}_t  \ra  = \la f,  M^{1,N}(t) \ra + \la f, M^{2,N}(t) \ra .\notag
\end{eqnarray}

$D^{1,N}(t)$ and $D^{2,N}(t)$ can be approximated by integrals:
\begin{eqnarray}
\sum_{k=0}^{ \floor*{Nt}-1} D^{1,N}_k &=& \sum_{k=0}^{ \floor*{Nt}-1}  \int_{ \frac{k}{N}}^{ \frac{k+1}{N}} \int_{\mathcal{X}\times\mathcal{Y}}   \alpha \big{(} y -  \la c \sigma(w \cdot x),  \nu^N_k \ra \big{)} \la \sigma(w \cdot x) \partial_c f, \nu_k^N \ra \pi(dx,dy) ds \notag \\
&=&  \sum_{k=0}^{ \floor*{Nt}-1}  \int_{ \frac{k}{N}}^{ \frac{k+1}{N}} \int_{\mathcal{X}\times\mathcal{Y}}   \alpha \big{(} y -  \la c \sigma(w \cdot x),  \mu^N_s \ra \big{)} \la \sigma(w \cdot x) \partial_c f, \mu_s^N \ra \pi(dx,dy) ds \notag \\
&=&  \int_{ 0}^t \int_{\mathcal{X}\times\mathcal{Y}}   \alpha \big{(} y -  \la c \sigma(w \cdot x),  \mu^N_s \ra \big{)} \la \sigma(w \cdot x) \partial_c f, \mu_s^N \ra \pi(dx,dy) ds + V^{1,N}_t,\notag
\end{eqnarray}
where $V^{1,N}_t$ is a remainder term defined below. Similarly,
\begin{eqnarray}
\sum_{k=0}^{ \floor*{Nt}-1} D^{2,N}_k =  \int_{ 0}^t \int_{\mathcal{X}\times\mathcal{Y}}   \alpha \big{(} y -  \la c \sigma(w \cdot x),  \mu^N_s \ra \big{)} \la c \sigma'(w \cdot x) x \cdot \nabla_w f, \mu_s^N \ra \pi(dx,dy) ds + V^{2,N}_t.\notag
\end{eqnarray}

The remainder terms $V^{1,N}_t$ and $V^{2,N}_t$ are
\begin{eqnarray}
V^{1,N}_t &=&  - \int_{ \frac{ \floor*{Nt} }{N} }^t \int_{\mathcal{X}\times\mathcal{Y}}   \alpha \big{(} y -  \la c \sigma(w \cdot x),  \mu^N_s \ra \big{)} \la \sigma(w \cdot x) \partial_c f, \mu_s^N \ra \pi(dx,dy) ds, \notag \\
V^{2,N}_t &=& - \int_{ \frac{ \floor*{Nt} }{N} }^t \int_{\mathcal{X}\times\mathcal{Y}}   \alpha \big{(} y -  \la c \sigma(w \cdot x),  \mu^N_s \ra \big{)} \la c \sigma'(w \cdot x) x \cdot \nabla_w f, \mu_s^N \ra \pi(dx,dy) ds, \notag \\
V^N_t &=& V^{1,N}_t + V^{2,N}_t.\notag
\end{eqnarray}

$V_t^N$ is a c\'adl\'ag process with jumps at times $ \frac{1}{N}, \frac{2}{N}, \ldots, \frac{\floor*{NT} }{N}$. Furthermore, due to the uniform bound (\ref{UniformBoundfromLLNpaper}) and $\mathcal{X} \times \mathcal{Y}$ being a compact set, $V_t^N$ is an $\mathcal{O}( N^{-1})$ remainder term:
\begin{eqnarray}
\sup_{ t \in [0,T]} | V_t^N | \leq \frac{C}{N}  \sum_{|\alpha| =1 } \sup_{c,w \in K} |D^{\alpha} f(c,w)  |
\label{Vbound}
\end{eqnarray}

The scaled empirical measure can be written as the telescoping sum
\begin{eqnarray}
\la f, \mu^N_{t} \ra - \la f, \mu^N_{0} \ra
&=& \la f, \nu^N_{\floor*{N t} } \ra - \la f, \nu^N_{0} \ra \notag \\
&=& \bigg{(} \la f , \nu^N_{  \floor*{Nt} } \ra - \la f , \nu^N_{  \floor*{Nt}-1 } \ra \bigg{)} + \bigg{(} \la f , \nu^N_{  \floor*{Nt} -1} \ra - \la f , \nu^N_{  \floor*{Nt}-2 } \ra \bigg{)}  \notag \\
& &+ \ldots +  \bigg{(} \la f , \nu^N_{  1} \ra - \la f , \nu^N_{  0} \ra \bigg{)} \notag \\
&=&\sum_{k=0}^{ \floor*{Nt}-1}  \bigg{(} \la f , \nu^N_{k+1} \ra - \la f , \nu^N_k \ra \bigg{)} .\notag
\end{eqnarray}

Therefore, the scaled empirical measure satisfies
\begin{eqnarray}
\la f, \mu^N_{t} \ra - \la f, \mu^N_{0} \ra &=& \sum_{k=0}^{ \floor*{Nt}-1}  \bigg{(} \la f , \nu^N_{k+1} \ra - \la f , \nu^N_k \ra \bigg{)} \notag \\
&=& \sum_{k=0}^{ \floor*{Nt}-1}  \bigg{(} D^{1,N}_k + D^{2,N}_k + \la f,  M^{1,N}(t) \ra + \la f,  M^{2,N}(t) \ra \bigg{)}  +  \frac{1}{N^2} \sum_{k=0}^{ \floor*{N t}-1}  G_k^N \notag \\
&=& \int_0^t   \int_{\mathcal{X}\times\mathcal{Y}}  \alpha \big{(} y -  \la c \sigma(w \cdot x),  \mu^N_{s} \ra \big{)} \la \sigma(w \cdot x) \partial_c f, \mu^N_{s} \ra \pi(dx,dy)   ds\notag \\
& &+ \int_0^t \int_{\mathcal{X}\times\mathcal{Y}}   \alpha \big{(} y -  \la c \sigma(w \cdot x),  \mu^N_{s} \ra \big{)} \la c  \sigma'(w \cdot x) x \cdot \nabla_w f, \mu^N_{s} \ra \pi(dx, dy) ds \notag \\
& &+ \la f, M_t^N \ra+  \frac{1}{N^2} \sum_{k=0}^{ \floor*{N t}-1}  G_k^N + V_t^N
\label{MuEqn}
\end{eqnarray}

Note that $ \frac{1}{N^2} \displaystyle \sum_{k=0}^{ \floor*{N t}-1}  G_k^N$ is $\mathcal{O}(N^{-1})$. Define the fluctuation process
\begin{eqnarray}
\eta_t^N = \sqrt{N} ( \mu_t^N - \bar \mu_t ).\notag
\end{eqnarray}
Then,
\begin{eqnarray}
\la f, \eta_t^N \ra - \la f, \eta^N_0 \ra &=& \int_0^t  \bigg{(} \int_{\mathcal{X}\times\mathcal{Y}}  \alpha \big{(} y -  \la c \sigma(w \cdot x), \bar \mu_{s} \ra \big{)} \la \sigma(w \cdot x) \partial_c f, \eta^N_{s} \ra \pi(dx,dy)  \bigg{)} ds\notag \\
&-&  \int_0^t  \bigg{(} \int_{\mathcal{X}\times\mathcal{Y}}  \alpha \la c \sigma(w \cdot x), \eta_s^N \ra \la \sigma(w \cdot x) \partial_c f, \bar \mu_{s} \ra \pi(dx,dy)  \bigg{)} ds\notag \\
&+& \int_0^t \bigg{(} \int_{\mathcal{X}\times\mathcal{Y}}   \alpha \big{(} y -  \la c \sigma(w \cdot x),  \bar \mu_{s} \ra \big{)} \la c  \sigma'(w \cdot x) x \cdot \nabla_w f, \eta^N_{s} \ra \pi(dx, dy) \bigg{)}ds \notag \\
&-&  \int_0^t \bigg{(} \int_{\mathcal{X}\times\mathcal{Y}}  \alpha\la c \sigma(w \cdot x),  \eta_{s}^N \ra \big{)} \la c  \sigma'(w \cdot x) x \cdot \nabla_w f, \bar \mu_{s} \ra \pi(dx, dy) \bigg{)}ds \notag \\
&+& \sqrt{N} \la f, M_t^N \ra + \Gamma^{1,N}_t +  \Gamma^{2,N}_t  + R_t^{1,N} + R_t^{2,N},
\label{EtaEqn1}
\end{eqnarray}
where
\begin{eqnarray}
 \Gamma^{1,N}_t &=& \frac{1}{\sqrt{N}} \int_0^t  \int_{\mathcal{X}\times\mathcal{Y}} - \alpha   \la c \sigma(w \cdot x), \eta_s^N \ra \la \sigma(w \cdot x) \partial_c f, \eta^N_{s} \ra \pi(dx,dy)   ds \notag \\
\Gamma^{2,N}_t &=& \frac{1}{\sqrt{N}} \int_0^t  \int_{\mathcal{X}\times\mathcal{Y}}  -\alpha   \la c \sigma(w \cdot x), \eta_s^N \ra  \la c \sigma'(w \cdot x) x \nabla_w f, \eta^N_{s} \ra \pi(dx,dy) ds.\notag
\end{eqnarray}
$R_t^{1,N}$ and $R_t^{2,N}$ are $\mathcal{O}(N^{-1/2})$ remainder terms where
\begin{eqnarray}
R_t^{1,N} &=& N^{-3/2} \sum_{k=0}^{  \floor*{N t}-1 } G_k^N, \notag \\
R_t^{2,N} &=& \sqrt{N} V_t^N.\notag
\end{eqnarray}

\section{Relative Compactness} \label{RelativeCompactness}

This section proves the relative compactness of the pre-limit processes $\{\eta_t^N, t\in[0,T]\}_{N\in\mathbb{N}}$ in $D_{W^{-J,2}}([0,T])$ and of  $\{\sqrt{N}M_t^N,t\in[0,T]\}_{N\in\mathbb{N}}$ in $D_{W^{-J,2}}([0,T])$. Lemma \ref{RelativeCompactnessLemma} states that relative compactness of $\{\eta_t^N, t\in[0,T]\}_{N\in\mathbb{N}}$ and of  $\{\sqrt{N}M_t^N,t\in[0,T]\}_{N\in\mathbb{N}}$ in $D_{W^{-J,2}}([0,T])$. The proof is based on Theorem 4.20 of \cite{Kurtz1975}, see also Theorem 8.6 in Chapter 3 of \cite{EthierAndKurtz}. We need to prove that $\eta^{N}_{\cdot}$ and $\sqrt{N}M_{\cdot}$ are appropriately uniformly bounded, see Lemma \ref{CompactcontainmentEtaProcessLemma} and Lemma \ref{L:CompactContainmenrM_process} respectively, and that they satisfy an appropriate regularity type of property, see Lemma \ref{L:regularityEta} and Lemma \ref{L:RegularityMprocess} respectively.

\subsection{Uniform bound on the fluctuations process $\eta^N$}

The main result of this section is Lemma \ref{UniformEtaBoundLemma} below and it provides a uniform bound with respect to $N\in\mathbb{N}$ and $t\in[0,T]$ for the process $\eta^{N}_{t}$.
\begin{lemma} \label{UniformEtaBoundLemma}
If $J_{1} = 2 \ceil*{\frac{D}{2}} + 4$, then there is a constant $C<\infty$ such that
\begin{eqnarray}
\sup_{N \in \mathbb{N}} \sup_{t \in [0,T]} \mathbb{E}  \norm{\eta^N_t}_{-J_{1}}^2  < C.
\label{UnfiormEtaBound}
\end{eqnarray}
\end{lemma}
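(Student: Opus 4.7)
The plan is to expand
\[
\|\eta^N_t\|_{-J_1}^2 \;=\; \sum_{k\ge 1} |\la \phi_k, \eta^N_t\ra|^2
\]
in a complete orthonormal basis $\{\phi_k\}_{k\ge 1}$ of $W_0^{J_1,2}(\Theta)$ (Parseval's identity in the dual pairing), apply the pre-limit evolution equation (\ref{EtaEqn1}) to each $\phi_k$, and close the resulting estimate by Gronwall's inequality. The specific value $J_1 = 2\ceil*{D/2}+4$ is dictated by the Hilbert--Schmidt summability that one needs across the basis.

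Set $J_1' = \ceil*{D/2}+1 > D/2$. Sobolev embedding then gives $\|\phi\|_\infty \leq C\|\phi\|_{J_1',2}$, and since $J_1 - J_1' = \ceil*{D/2}+3 > D/2$ the inclusion $W_0^{J_1,2}(\Theta)\hookrightarrow W_0^{J_1',2}(\Theta)$ is Hilbert--Schmidt, so $\sum_k \|\phi_k\|_\infty^2 \leq C\sum_k \|\phi_k\|_{J_1'}^2 < \infty$. This immediately handles the initial condition: since $(c_0^i, w_0^i)$ are i.i.d.~from $\bar\mu_0$, one has $\mathbb{E}|\la \phi_k, \eta^N_0\ra|^2 = \mathrm{Var}_{\bar\mu_0}(\phi_k) \leq \|\phi_k\|_\infty^2$, giving $\mathbb{E}\|\eta^N_0\|_{-J_1}^2 \leq C$. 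It also handles the martingale piece: the $\mathcal{F}^N_k$-martingale increments $\la \phi_k, M^N_j\ra$ are orthogonal, and each has conditional second moment $O(N^{-2}\|\phi_k\|_\infty^2)$ by (\ref{UniformBoundfromLLNpaper}) and the compactness of $\mathcal{X}\times\mathcal{Y}$; summing $\lfloor NT\rfloor$ terms and multiplying by $N$ gives $N\mathbb{E}|\la \phi_k, M^N_t\ra|^2 \leq CT\|\phi_k\|_\infty^2$, which sums to a finite bound.

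The main obstacle is the deterministic drift $\int_0^t \mathcal{L}_s[\phi_k](\eta^N_s)\,ds$ coming from the first four integrals in (\ref{EtaEqn1}). This bilinear form decomposes into a direct piece of the form $\int\alpha(y - \la c\sigma(w\cdot x), \bar\mu_s\ra)\la g_\phi^{x,y}, \eta\ra \pi(dx,dy)$ and a coupling piece $-\int\alpha \la c\sigma(w\cdot x), \eta\ra\la g_\phi^{x,y}, \bar\mu_s\ra \pi(dx,dy)$, where $g_\phi^{x,y} = \sigma(w\cdot x)\partial_c\phi + c\sigma'(w\cdot x) x\cdot\nabla_w\phi$ carries one derivative of $\phi$. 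For the coupling piece I use Lemma \ref{CompactLemmaXi} to multiply $c\sigma(w\cdot x)$ by a fixed smooth cutoff $\chi$ equal to 1 on $K$, producing an element of $W_0^{J_1,2}(\Theta)$ with norm uniformly bounded in $x$, so that $|\la c\sigma(w\cdot x), \eta^N_s\ra| \leq C\|\eta^N_s\|_{-J_1}$; combined with $|\la g_\phi^{x,y}, \bar\mu_s\ra| \leq C\|\phi\|_{J_1'+1,2}$ and the Hilbert--Schmidt embedding $W_0^{J_1,2}\hookrightarrow W_0^{J_1'+1,2}$ (valid since $J_1-(J_1'+1) = \ceil*{D/2}+2 > D/2$), this piece contributes at most $C(1+\|\eta^N_s\|_{-J_1}^2)$ after summing over the basis. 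For the direct piece one identifies the adjoint of the first-order operator $\phi \mapsto g_\phi^{x,y}$ and uses integration by parts in the distributional pairing with $\eta^N_s$; the apparent derivative loss comes from the antisymmetric principal part, whose contribution to the Sobolev inner product cancels, leaving only bounded zeroth-order commutator terms. Combined with Parseval's identity on $W^{-J_1,2}$, this yields $\sum_k |\la g_{\phi_k}^{x,y}, \eta^N_s\ra|^2 \leq C\|\eta^N_s\|_{-J_1}^2$ uniformly in $(x,y,s)$.

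The remainder terms $\Gamma^{1,N}_t, \Gamma^{2,N}_t, R^{1,N}_t, R^{2,N}_t$ carry $N^{-1/2}$ prefactors and, combined with (\ref{Vbound}) and (\ref{UniformBoundfromLLNpaper}), contribute at most $O(N^{-1})$. Collecting everything yields
\[
\mathbb{E}\|\eta^N_t\|_{-J_1}^2 \leq C + C\int_0^t \mathbb{E}\|\eta^N_s\|_{-J_1}^2\,ds, \quad t \in [0, T],
\]
uniformly in $N$, and Gronwall's inequality concludes. The hardest step is the integration-by-parts / commutator bound for the direct drift piece: this is where the one-derivative loss on $\phi$ has to be absorbed, and the margin in $J_1 = 2\ceil*{D/2}+4$ beyond the minimal value $J_1' = \ceil*{D/2}+1$ needed for Sobolev continuity is precisely what supplies the Hilbert--Schmidt summability across all the embeddings in play.
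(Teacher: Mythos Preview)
Your overall strategy—Parseval in $W^{-J_1,2}$, Hilbert--Schmidt summability, commutator/antisymmetry to absorb the derivative in the transport drift, and Gronwall—is the same as the paper's. But two of your steps do not go through as written.

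\textbf{The drift bound is misstated.} The antisymmetry/integration-by-parts argument for a first-order operator $\mathcal{G}\phi = g^{x,y}_\phi$ only controls the \emph{diagonal quadratic form}: one gets $|\la \eta,\mathcal{G}^*\eta\ra_{-J_1}|\le C\|\eta\|_{-J_1}^2$ (this is exactly Lemma~\ref{LemmaBoundPsiG} in the paper, proved via Lemma~\ref{LemmaBoundPsi}). It does \emph{not} give $\|\mathcal{G}^*\eta\|_{-J_1}\le C\|\eta\|_{-J_1}$, which is what your claimed bound $\sum_k|\la g_{\phi_k}^{x,y},\eta^N_s\ra|^2\le C\|\eta^N_s\|_{-J_1}^2$ amounts to by Parseval. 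A first-order operator is simply not bounded on $W^{-J_1,2}$; the cancellation is only in the pairing $\la\eta,\mathcal{G}^*\eta\ra$. Consequently you cannot square the drift integral in (\ref{EtaEqn1}) and sum over the basis. You must instead differentiate $\la\phi_k,\eta^N_t\ra^2$ (chain rule with jumps, as in (\ref{Ito})) so that the drift enters through the cross term $2\la\phi_k,\eta^N_s\ra\la\mathcal{G}\phi_k,\eta^N_s\ra$, whose sum over $k$ is precisely $2\la\eta^N_s,\mathcal{G}^*\eta^N_s\ra_{-J_1}$.

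\textbf{The $\Gamma$ terms are not $O(N^{-1})$.} $\Gamma^{1,N}_t,\Gamma^{2,N}_t$ are quadratic in $\eta^N_s$ with only a $N^{-1/2}$ prefactor; (\ref{UniformBoundfromLLNpaper}) and (\ref{Vbound}) give $|\la c\sigma(w\cdot x),\eta^N_s\ra|\le C\sqrt{N}$, not a small bound, so the claimed $O(N^{-1})$ contribution would be circular (indeed the paper only proves $\mathbb{E}|\Gamma^{i,N}_t|\le CN^{-1/2}$ \emph{after} establishing Lemma~\ref{UniformEtaBoundLemma}, in the proof of Lemma~\ref{LemmaIdentifyLimit}). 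The paper resolves this by introducing an auxiliary i.i.d.\ particle system $(\tilde c^i_t,\tilde w^i_t)$ with law $\bar\mu_t$ and decomposing $\eta^N_t=\Xi^N_t+\sqrt{N}(\tilde\mu^N_t-\bar\mu_t)$; the evolution of $\Xi^N$ contains no $\Gamma$-type terms, and the nonlinearity instead splits into pieces paired with the bounded measures $\mu^N,\tilde\mu^N$ or with the i.i.d.\ fluctuation $\sqrt{N}(\tilde\mu^N-\bar\mu)$, whose second moment is uniformly bounded by the classical CLT. Your direct approach can be repaired without this decomposition, but only via the chain-rule cross form: write $N^{-1/2}\la c\sigma(w\cdot x),\eta^N_s\ra=\la c\sigma(w\cdot x),\mu^N_s-\bar\mu_s\ra$, which is uniformly bounded by (\ref{UniformBoundfromLLNpaper}), so that the $\Gamma$ cross term becomes a bounded multiple of $\la\eta^N_s,\mathcal{G}^*\eta^N_s\ra_{-J_1}$ and is absorbed exactly like the direct drift.
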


The proof of this lemma requires a number of intermediate results. We develop these estimates now and present the proof of Lemma \ref{UniformEtaBoundLemma} in the end of this section.

Consider the particle system
\begin{eqnarray}
 \tilde c_t^i &=& c_{0}^i +\int_{0}^{t}\alpha \int_{\mathcal{X}\times\mathcal{Y}}   (y - \la c \sigma (w \cdot x), \bar \mu_s \ra )  \sigma (\tilde w_s^i \cdot x) \pi(dx,dy) ds, \notag \\
\tilde w_t^i &=& w_{0}^i +\int_{0}^{t}\alpha \int_{\mathcal{X}\times\mathcal{Y}}  (y -\la c \sigma (w \cdot x), \bar \mu_s \ra ) \tilde c_s^i \sigma' (\tilde w_s^i \cdot x) x  \pi(dx,dy) ds. \notag \\
\tilde \mu^N_t &=& \frac{1}{N} \sum_{i=1}^N \delta_{ (\tilde c_t^i, \tilde w_t^i )}.\notag
\end{eqnarray}

The particles $(\tilde c^i, \tilde w^i)$ are i.i.d. with law $\bar \mu$ and $\tilde \mu^N \overset{p} \rightarrow \bar \mu$. By the results of  \cite{NeuralNetworkLLN} we obtain that $\tilde \mu^N$ is also compactly supported uniformly in $N\in\mathbb{N}$ and $t\in[0,T]$. We decompose the $\eta_t^N$ into two terms:
\begin{eqnarray}
\eta_t^N = \sqrt{N} ( \mu_t^N - \tilde \mu^N_t ) + \sqrt{N} ( \tilde \mu^N_t - \bar \mu_t ).
\label{DeCompositionEta}
\end{eqnarray}

Define $\Xi^N_t = \sqrt{N} ( \mu_t^N - \tilde \mu^N_t )$. Then,
\begin{eqnarray}
\la f, \Xi^N_t \ra &=& \sqrt{N} \int_0^t  \int_{\mathcal{X}\times\mathcal{Y}}  \alpha \big{(} y -  \la c \sigma(w \cdot x),  \mu^N_{s} \ra \big{)} \la \sigma(w \cdot x) \partial_c f, \mu^N_{s} \ra \pi(dx,dy)   ds\notag \\
& &+ \sqrt{N} \int_0^t  \int_{\mathcal{X}\times\mathcal{Y}}   \alpha \big{(} y -  \la c \sigma(w \cdot x),  \mu^N_{s} \ra \big{)} \la c  \sigma'(w \cdot x) x \cdot \nabla_w f, \mu^N_{s} \ra \pi(dx, dy) ds \notag \\
& &- \sqrt{N} \int_0^t  \int_{\mathcal{X}\times\mathcal{Y}}  \alpha \big{(} y -  \la c \sigma(w \cdot x),  \bar \mu_{s} \ra \big{)} \la \sigma(w \cdot x) \partial_c f, \tilde \mu^N_{s} \ra \pi(dx,dy)   ds\notag \\
& &- \sqrt{N} \int_0^t  \int_{\mathcal{X}\times\mathcal{Y}}   \alpha \big{(} y -  \la c \sigma(w \cdot x), \bar \mu_{s} \ra \big{)} \la c  \sigma'(w \cdot x) x \cdot \nabla_w f, \tilde \mu^N_{s} \ra \pi(dx, dy)  ds \notag \\
& &+ \sqrt{N} \la f, M_t^N \ra+ R_t^{1,N} + R_t^{2,N}.\label{Eq:XiRepresentation}
\end{eqnarray}
By chain rule,
\begin{align}
\la f, \Xi^N_t \ra^2 &= 2 \sqrt{N} \int_0^t   \int_{\mathcal{X}\times\mathcal{Y}}  \alpha \la f, \Xi^N_s \ra \big{(} y -  \la c \sigma(w \cdot x),  \mu^N_{s} \ra \big{)} \la \sigma(w \cdot x) \partial_c f, \mu^N_{s} \ra \pi(dx,dy)   ds\notag \\
&\quad+ 2  \sqrt{N} \int_0^t  \int_{\mathcal{X}\times\mathcal{Y}}   \alpha \la f, \Xi^N_s \ra \big{(} y -  \la c \sigma(w \cdot x),  \mu^N_{s} \ra \big{)} \la c  \sigma'(w \cdot x) x \cdot \nabla_w f, \mu^N_{s} \ra \pi(dx, dy) ds \notag \\
&\quad- 2  \sqrt{N} \int_0^t   \int_{\mathcal{X}\times\mathcal{Y}}  \alpha  \la f, \Xi^N_s \ra \big{(} y -  \la c \sigma(w \cdot x),  \bar \mu_{s} \ra \big{)} \la \sigma(w \cdot x) \partial_c f, \tilde \mu^N_{s} \ra \pi(dx,dy)   ds\notag \\
&\quad- 2 \sqrt{N} \int_0^t  \int_{\mathcal{X}\times\mathcal{Y}}   \alpha  \la f, \Xi^N_s \ra \big{(} y -  \la c \sigma(w \cdot x), \bar \mu_{s} \ra \big{)} \la c  \sigma'(w \cdot x) x \cdot \nabla_w f, \tilde \mu^N_{s} \ra \pi(dx, dy) ds \notag \\
&\quad+  \sum_{k =0}^{\floor*{Nt}-1}  \left( \la f, \Xi_{\frac{k+1}{N}^{-}}^N   +  \sqrt{N} M^{1,N}_k +\sqrt{N} M^{2,N}_k\ra^2   - \la f, \Xi^N_{\frac{k+1}{N}^{-}} \ra^2 \right)  \notag \\
&\quad+ \tilde R_t^{1,N} + \tilde R_t^{2,N}.
\label{Ito}
\end{align}

$\tilde R_t^{1,N}$  and $\tilde R_t^{2,N}$ are the remainder terms
\begin{eqnarray}
\tilde R_t^{1,N} = \sum_{k =0}^{\floor*{Nt}-1} \bigg{(} \big{(} \la f, \Xi_{\frac{k+1}{N}^{-}}^N \ra  + G_k^N  N^{-3/2} \big{)}^2  - \la f, \Xi^N_{\frac{k+1}{N}^{-}} \ra^2  \bigg{)},\notag
\end{eqnarray}
and
\begin{align}
\tilde R_t^{2,N} &= - 2\sqrt{N} \int_{ \frac{ \floor*{Nt} }{N} }^t \int_{\mathcal{X}\times\mathcal{Y}}   \alpha \la f, \Xi_s^N \ra \big{(} y -  \la c \sigma(w \cdot x),  \mu^N_s \ra \big{)} \la \sigma(w \cdot x) \partial_c f, \mu_t^N \ra \pi(dx,dy) ds \notag \\
&- 2\sqrt{N} \int_{ \frac{ \floor*{Nt} }{N} }^t \int_{\mathcal{X}\times\mathcal{Y}}   \alpha  \la f, \Xi_s^N \ra \big{(} y -  \la c \sigma(w \cdot x),  \mu^N_t \ra \big{)} \la c \sigma'(w \cdot x) x \cdot \nabla_w f, \mu_t^N \ra \pi(dx,dy) ds.\nonumber
\end{align}

\begin{lemma}\label{L:RemainderTerms}
With $\tilde R_t^{1,N}$  and $\tilde R_t^{2,N}$ defined as above we have
\begin{align}
|\tilde R_t^{1,N}|+| \tilde R_t^{2,N}  |  &\leq   C_1 \int_{0}^t  \la f, \Xi_s^N \ra^2 ds + C_2 \norm{f}_L^2.\label{Eq:BoundRTerms}
\end{align}
In addition,
\begin{align}
\mathbb{E} \bigg{[}  \sum_{k =0}^{\floor*{Nt}-1} \bigg{(} \big{(} \la f, \Xi^N_{\frac{k+1}{N}^{-}}   + \sqrt{N} M^{1,N}_k + \sqrt{N} M^{2,N}_k \ra\big{)}^2  - \la f, \Xi^N_{ \frac{k+1}{N}^{-}} \ra^2  \bigg{)} \bigg{]}  & \leq C \norm{f}_L^2.
\label{Mbound}
\end{align}
\end{lemma}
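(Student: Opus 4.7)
The plan is to split the estimate into three pieces: a pathwise bound for $\tilde R_t^{1,N}$, a pathwise bound for $\tilde R_t^{2,N}$, and an expectation bound for the martingale telescoping sum. For $\tilde R_t^{1,N}$, I would expand the square to produce a cross term $2 N^{-3/2}\la f,\Xi^N_{(k+1)/N^-}\ra G_k^N$ plus a quadratic piece $(G_k^N)^2 N^{-3}$. Using the uniform bound $|G_k^N|\le C\sum_{|\alpha|=2}\sup_{K}|D^\alpha f|\le C\norm{f}_L$ (second derivatives of $f$ being dominated by its Sobolev norm via embedding for $L$ large enough), the quadratic contribution sums to $O(N^{-2})\norm{f}_L^2$. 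For the cross term I would apply Young's inequality in the weighted form $2ab\le a^2/N + Nb^2$ to obtain
\[
2 N^{-3/2}|\la f,\Xi^N_{(k+1)/N^-}\ra||G_k^N| \le \frac{1}{N}\la f,\Xi^N_{(k+1)/N^-}\ra^2 + \frac{(G_k^N)^2}{N^2},
\]
which sums to $\frac{1}{N}\sum_k \la f,\Xi^N_{(k+1)/N^-}\ra^2 + C\norm{f}_L^2$. A Riemann-sum comparison then converts the first sum into $C\int_0^t\la f,\Xi^N_s\ra^2\,ds$, up to errors absorbed into $C\norm{f}_L^2$; here one uses that $\tilde\mu^N$ is Lipschitz in time and $\mu^N$ is constant on each $[k/N,(k+1)/N)$, so the within-interval fluctuation of $\la f,\Xi^N_s\ra$ is of order $\norm{f}_L/\sqrt{N}$.

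For $\tilde R_t^{2,N}$, the key idea is that the integration interval $[\floor*{Nt}/N,t]$ has length at most $1/N$, which is what absorbs the prefactor $\sqrt{N}$. Using \eqref{UniformBoundfromLLNpaper}, compactness of $\mathcal{X}\times\mathcal{Y}$, and $|\la \sigma(w\cdot x)\partial_c f,\mu^N\ra|\le C\norm{f}_L$, I would first get
\[
|\tilde R_t^{2,N}|\le C\sqrt{N}\,\norm{f}_L \int_{\floor*{Nt}/N}^t |\la f,\Xi^N_s\ra|\,ds,
\]
then apply Cauchy--Schwarz to bound this by $C\sqrt{N(t-\floor*{Nt}/N)}\,\norm{f}_L\bigl(\int_0^t \la f,\Xi^N_s\ra^2\,ds\bigr)^{1/2}\le C\norm{f}_L\bigl(\int_0^t \la f,\Xi^N_s\ra^2\,ds\bigr)^{1/2}$, and finally use Young's inequality to split this as $\tfrac12\int_0^t \la f,\Xi^N_s\ra^2\,ds + C'\norm{f}_L^2$. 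Combining with the $\tilde R^{1,N}$ estimate yields \eqref{Eq:BoundRTerms}.

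For the martingale estimate \eqref{Mbound}, I would expand
\[
\bigl(\la f, \Xi^N_{(k+1)/N^-}\ra + \sqrt{N}\la f, M^{1,N}_k+M^{2,N}_k\ra\bigr)^2 - \la f,\Xi^N_{(k+1)/N^-}\ra^2 = 2\sqrt{N}\la f,\Xi^N_{(k+1)/N^-}\ra\la f, M^{1,N}_k+M^{2,N}_k\ra + N\la f, M^{1,N}_k+M^{2,N}_k\ra^2.
\]
The crucial observation is that $\Xi^N_{(k+1)/N^-}$ is $\mathcal{F}_k^N$-measurable (because $\mu^N_{(k+1)/N^-}=\nu^N_k$ is, and $\tilde\mu^N$ depends only on the initial particles and on the deterministic $\bar\mu$), while $M^{1,N}_k$ and $M^{2,N}_k$ were constructed precisely so that $\mathbb{E}[\la f,M^{i,N}_k\ra\mid\mathcal{F}_k^N]=0$. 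Hence the cross term vanishes under expectation by the tower property, leaving only $N\,\mathbb{E}\la f, M^{1,N}_k+M^{2,N}_k\ra^2$. The pathwise bound $|\la f,M^{i,N}_k\ra|\le \tfrac{C}{N}\norm{f}_L$ then gives $\mathbb{E}\la f, M^{1,N}_k+M^{2,N}_k\ra^2\le CN^{-2}\norm{f}_L^2$, and summing over $k\le\floor*{NT}$ produces $CT\norm{f}_L^2$ as required. The main technical obstacle I anticipate is making the Riemann-sum comparison in the $\tilde R^{1,N}$ step fully rigorous: since $\Xi^N_s$ drifts continuously through $\tilde\mu^N$ between discrete-time updates, one must quantify its within-interval oscillation quantitatively and verify that the resulting error contributions really are absorbed into $C\norm{f}_L^2$ rather than contaminating the Gronwall-ready integral in \eqref{Eq:BoundRTerms}.
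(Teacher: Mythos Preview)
Your proposal is correct, and for $\tilde R_t^{2,N}$ and the martingale estimate \eqref{Mbound} it matches the paper's proof essentially line by line (the paper uses weighted Young rather than Cauchy--Schwarz in time for $\tilde R_t^{2,N}$, but the effect is identical). For $\tilde R_t^{1,N}$, however, you take a genuinely different and harder route than the paper.

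The paper does not convert the cross term into a Riemann sum at all. Instead it undoes the $\sqrt{N}$ in $\Xi^N$: writing $\la f,\Xi^N_{(k+1)/N^-}\ra=\sqrt{N}\,\la f,\mu^N_{k/N}-\tilde\mu^N_{(k+1)/N}\ra$, the cross term becomes
\[
2N^{-3/2}\la f,\Xi^N_{(k+1)/N^-}\ra\,G_k^N
= \tfrac{2}{N}\,\la f,\mu^N_{k/N}-\tilde\mu^N_{(k+1)/N}\ra\,G_k^N,
\]
and since $\mu^N$ and $\tilde\mu^N$ are both probability measures supported in the compact set $K$, one has the crude bound $|\la f,\mu^N-\tilde\mu^N\ra|\le 2\sup_K|f|\le C\norm{f}_L$. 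Summing over $k\le\lfloor Nt\rfloor$ then gives $|\tilde R_t^{1,N}|\le C\norm{f}_L^2$ outright, with no integral term and no Riemann-sum comparison needed. In other words, the integral $\int_0^t\la f,\Xi^N_s\ra^2\,ds$ in the final bound \eqref{Eq:BoundRTerms} comes \emph{entirely} from $\tilde R_t^{2,N}$ in the paper's argument. Your route via weighted Young plus a Riemann-sum comparison also works (your within-interval oscillation estimate $O(\norm{f}_L/\sqrt{N})$ is correct and suffices), but it introduces exactly the technical obstacle you flag, which the paper sidesteps completely by exploiting that $\Xi^N/\sqrt{N}$ is a difference of probability measures.
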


The proof of Lemma \ref{L:RemainderTerms} is deferred to Appendix \ref{A:Appendix0}.

 Next, we employ a decomposition into several terms in order to study the first and third term of (\ref{Ito}) (and similarly for the terms two and four of (\ref{Ito})).
\begin{eqnarray}
 &\phantom{.}& \sqrt{N} \bigg{[} \big{(} y -  \la c \sigma(w \cdot x),  \mu^N_{s} \ra \big{)} \la \sigma(w \cdot x) \partial_c f, \mu^N_{s}  \ra  - \big{(} y -  \la c \sigma(w \cdot x),  \bar \mu_{s} \ra \big{)} \la \sigma(w \cdot x) \partial_c f, \tilde \mu^N_{s}  \ra  \bigg{]}  \notag \\
 &=&y \la \sigma(w \cdot x) \partial_c f, \Xi^N_{s}  \ra \notag \\
 & &-   \la c \sigma(w \cdot x),  \mu^N_{s} \ra  \la \sigma(w \cdot x) \partial_c f, \Xi^N_{s} \ra \notag \\
 & &-    \la c \sigma(w \cdot x),  \Xi_s^N \ra  \la \sigma(w \cdot x) \partial_c f, \tilde \mu^N_{s} \ra \notag \\
 & &-   \la c \sigma(w \cdot x), \sqrt{N} ( \tilde \mu_s^N   - \bar \mu_s ) \ra \la \sigma(w \cdot x) \partial_c f, \tilde \mu^N_{s} \ra.\nonumber
\end{eqnarray}

Using the bounds (\ref{Eq:BoundRTerms}) and (\ref{Mbound}) (Lemma \ref{L:RemainderTerms}), equation (\ref{Ito}) gives
\begin{eqnarray}
\mathbb{E} \bigg{[} \la f, \Xi^N_t \ra^2 \bigg{]} &\leq& 2 \mathbb{E} \bigg{[} \int_0^t \int_{\mathcal{X}\times\mathcal{Y}}  \alpha y  \la f, \Xi^N_s  \ra   \la \sigma(w \cdot x) \partial_c f, \Xi^N_{s} \ra \pi(dx,dy)  ds \notag \\
& &+ 2 \int_0^t \int_{\mathcal{X}\times\mathcal{Y}}  \alpha y \la f, \Xi^N_s  \ra   \la c \sigma'(w \cdot x)  x\cdot\nabla_w f, \Xi^N_{s} \ra \pi(dx,dy)  ds \notag \\
& &- 2 \int_0^t \int_{\mathcal{X}\times\mathcal{Y}}  \alpha  \la f, \Xi^N_s  \ra  \la c \sigma(w \cdot x),  \mu^N_{s} \ra  \la \sigma(w \cdot x) \partial_c f, \Xi^N_{s} \ra \pi(dx,dy)  ds \notag \\
& &- 2 \int_0^t   \int_{\mathcal{X}\times\mathcal{Y}}  \alpha  \la f, \Xi^N_s \ra  \la c \sigma(w \cdot x),  \Xi_s^N \ra \la \sigma(w \cdot x) \partial_c f, \tilde \mu^N_{s} \ra \pi(dx,dy)   ds \notag \\
& &-  2 \int_0^t  \int_{\mathcal{X}\times\mathcal{Y}}  \alpha  \la f, \Xi^N_s \ra   \la c \sigma(w \cdot x), \sqrt{N} ( \tilde \mu_s^N   - \bar \mu_s ) \ra  \la \sigma(w \cdot x) \partial_c f, \tilde \mu^N_{s} \ra \pi(dx,dy)  ds \notag \\
& &- 2  \int_0^t  \int_{\mathcal{X}\times\mathcal{Y}}  \alpha  \la f, \Xi^N_s \ra   \la c \sigma(w \cdot x),  \mu^N_{s} \ra  \la  c \sigma'(w \cdot x) x \cdot \nabla_w f, \Xi^N_{s} \ra \pi(dx,dy)  ds \notag \\
& &- 2 \int_0^t   \int_{\mathcal{X}\times\mathcal{Y}}  \alpha  \la f, \Xi^N_s \ra   \la c \sigma(w \cdot x),  \Xi_s^N \ra  \la c \sigma'(w \cdot x) x \cdot \nabla_w f, \tilde \mu^N_{s} \ra \pi(dx,dy)   ds \notag \\
& &- 2 \int_0^t \int_{\mathcal{X}\times\mathcal{Y}}  \alpha  \la f, \Xi^N_s \ra  \la c \sigma(w \cdot x), \sqrt{N} ( \tilde \mu_s^N   - \bar \mu_s ) \ra  \la c  \sigma'(w \cdot x)  x \cdot \nabla_w f, \tilde \mu^N_{s} \ra \pi(dx,dy)   ds \bigg{]} \notag \\
& &+   C_1 \int_{0}^t  \mathbb{E} \left[\la f, \Xi_s^N \ra^2 \right]ds + C_2 \norm{f}_L^2.
\label{XiSquared}
\end{eqnarray}

We begin with the fourth term in (\ref{XiSquared}); the seventh term can be treated completely analogously and is omitted.  First, notice that for any $x \in \mathcal{X}$,
\begin{eqnarray}
 \la c \sigma(w \cdot x),  \Xi_s^N \ra^2  &\leq&  \norm{  c \sigma(w \cdot x)}_{J_{1}}^2  \norm{\Xi_s^N}_{-J_{1}}^2\leq  C \norm{\Xi_s^N}_{-J_{1}}^2,
 \label{NewBoundXicsigma}
\end{eqnarray}
due to Assumption \ref{A:Assumption1} and to the compactness of $\Theta$.

By the Sobolev embedding Theorem (Theorem 6.2 in \cite{Adams}), we have that
\begin{eqnarray}
 \sum_{|\alpha| \leq 2 } \sup_{c,w \in K} |D^{\alpha} f(c,w)  |  \leq   C\norm{ f}_{L}
\label{SupToJnorm}
\end{eqnarray}
where $L = \ceil*{\frac{D}{2}} + 3$.

Using Young's inequality, (\ref{NewBoundXicsigma}), and (\ref{SupToJnorm}) to bound $\la \sigma(w \cdot x) \partial_c f, \tilde \mu^N_{s} \ra^2 \leq C  \big{(} \sum_{| \alpha | = 1} \sup_{(c,w) \in K} |D^{\alpha}f( c,w) | \big{)}^2 \leq C \norm{ f}_L^2$, we obtain
\begin{eqnarray}
&\phantom{.}&- \int_0^t  \int_{\mathcal{X}\times\mathcal{Y}}  \alpha  \la f, \Xi^N_s \ra  \la c \sigma(w \cdot x),  \Xi_s^N \ra \la \sigma(w \cdot x) \partial_c f, \tilde \mu^N_{s} \ra \pi(dx,dy)  ds \notag \\
&\leq& C  \int_0^t  \int_{\mathcal{X}\times\mathcal{Y}} \bigg{(}  \la f, \Xi^N_s \ra^2 +  \norm{\Xi_s^N}_{-J_{1}}^2  \norm{f}_L^2 \bigg{)} \pi(dx,dy)  ds.\notag
\end{eqnarray}

Next, we study the fifth term in (\ref{XiSquared}); the eighth term can be treated completely analogously and is omitted. The term $ \la c \sigma(w \cdot x), \sqrt{N} ( \tilde \mu_s^N   - \bar \mu_s ) \ra$ can be re-written as
\begin{eqnarray}
 \la c \sigma(w \cdot x), \sqrt{N} ( \tilde \mu_t^N   - \bar \mu_t ) \ra = N^{-1/2} \sum_{i=1}^N  \big{(} \tilde c_t^i  \sigma (  \tilde w_t^i  x ) -  \la c \sigma( w x), \bar \mu_t \ra   \big{)}.\notag
\end{eqnarray}
Since $(\tilde c_t^i, \tilde w_t^i )$ are i.i.d. random variables with law $\bar \mu_t$ and $x$ takes values in the compact set $\mathcal{X}$,
\begin{eqnarray}
\mathbb{E} \bigg{[}  \la c \sigma(w \cdot x), \sqrt{N} ( \tilde \mu_t^N   - \bar \mu_t ) \ra^{2} \bigg{]} \leq  C.\notag
\end{eqnarray}

Using Young's inequality and the fact that $\bar \mu$ takes values in a compact set $K$,
\begin{eqnarray}
&\phantom{.}& \mathbb{E} \bigg{[}  \int_0^t  \int_{\mathcal{X}\times\mathcal{Y}}  -  \la f, \Xi^N_s \ra   \la c \sigma(w \cdot x), \sqrt{N} ( \tilde \mu_s^N   - \bar \mu_s ) \ra \la \sigma(w \cdot x) \partial_c f, \tilde \mu^N_{s} \ra \pi(dx,dy)  ds \bigg{]} \notag \\
&\leq& C \int_0^t  \int_{\mathcal{X}\times\mathcal{Y}}    \mathbb{E} \big{[} \la f, \Xi^N_s \ra^2  \big{]}+  \mathbb{E} \big{[} \la c \sigma(w \cdot x), \sqrt{N} ( \tilde \mu_s^N   - \bar \mu_s ) \ra ^2 \la \sigma(w \cdot x) \partial_c f, \tilde \mu^N_{s} \ra^2 \big{]} \pi(dx,dy)  ds \notag \\
&\leq& C \int_0^t  \int_{\mathcal{X}\times\mathcal{Y}}    \mathbb{E} \big{[} \la f, \Xi^N_s \ra^2  \big{]}+  \mathbb{E} \big{[}   \la c \sigma(w \cdot x), \sqrt{N} ( \tilde \mu_s^N   - \bar \mu_s ) \ra ^2 \big{]} ( \sup_{c,w \in K} | \partial_{c}f | )^2  \pi(dx,dy)  ds \notag \\
&\leq& C \int_0^t  \int_{\mathcal{X}\times\mathcal{Y}}  \bigg{(}  \mathbb{E} \big{[} \la f, \Xi^N_s \ra^2  \big{]}+ \norm{f}_L^2 \bigg{)} \pi(dx,dy)  ds.\notag
\end{eqnarray}


Hence, it remains to study the first, second, third and sixth term in (\ref{XiSquared}). To do so, we first state the following lemma.
\begin{lemma} \label{CompactLemmaXi}
There is a compact set $K = [-C_{o}, C_{o}]^{1+d} \subset \mathbb{R}^{1+d}$ such that $\eta_t^N$, $\Xi_t^N$, and $\mu_t^N$ vanish when evaluated on any $A\subset K^{c}$.
\end{lemma}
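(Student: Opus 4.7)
The plan is to reduce this to the three uniform $L^\infty$ bounds on the underlying particle trajectories: for $\mu_t^N$ this is the SGD bound already quoted from \cite{NeuralNetworkLLN} in \eqref{UniformBoundfromLLNpaper}, for $\bar\mu_t$ this follows from passing that bound to the limit, and for $\tilde\mu_t^N$ it follows from the fact that the McKean--Vlasov particles $(\tilde c^i,\tilde w^i)$ are driven by an ODE whose coefficients are uniformly bounded on compacts. Since $\eta_t^N$ and $\Xi_t^N$ are (signed) linear combinations of these three measures, supports add, and the compact containment claim follows by choosing $C_o$ large enough so that all three supports lie in $[-C_o,C_o]^{1+d}$.

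Concretely, I would proceed in three steps. First, \eqref{UniformBoundfromLLNpaper} says that $|c_k^i|+\|w_k^i\|<C_o$ uniformly in $i$, $0\le k\le \lfloor NT\rfloor$, and $N$; taking $k=\lfloor Nt\rfloor$, each atom of $\mu_t^N=\frac1N\sum_i\delta_{(c_k^i,w_k^i)}$ lies in $K=[-C_o,C_o]^{1+d}$, so $\mu_t^N(A)=0$ for every Borel $A\subset K^c$. Second, $K$ is closed and $\mu_t^N\Rightarrow\bar\mu_t$ in $\mathcal{M}(\mathbb{R}^{1+d})$ by Theorem \ref{TheoremLLN}, so the Portmanteau theorem yields
\[
\bar\mu_t(K)\ge\limsup_{N\to\infty}\mu_t^N(K)=1,
\]
hence $\bar\mu_t$ is also supported in $K$. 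Third, the particles $(\tilde c_t^i,\tilde w_t^i)$ solve an integral equation driven by $(y-\langle c\sigma(w\cdot x),\bar\mu_s\rangle)\sigma(\tilde w_s^i\cdot x)$ and the analogous $w$-term; by Assumption \ref{A:Assumption1} the initial laws have compact support, $\sigma,\sigma'$ are bounded, $(X,Y)$ is compactly supported, and $\bar\mu_s$ was just shown to be compactly supported, so a Gronwall estimate (or, more directly, the compactness result quoted from \cite{NeuralNetworkLLN} for exactly this particle system) gives $|\tilde c_t^i|+\|\tilde w_t^i\|<C_o$ uniformly, possibly after enlarging $C_o$ once and for all. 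Thus $\tilde\mu_t^N$ also vanishes off $K$.

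Having all three measures supported in the same $K$, for any Borel $A\subset K^c$ we obtain
\[
\eta_t^N(A)=\sqrt{N}\bigl(\mu_t^N(A)-\bar\mu_t(A)\bigr)=0,\qquad \Xi_t^N(A)=\sqrt{N}\bigl(\mu_t^N(A)-\tilde\mu_t^N(A)\bigr)=0,
\]
which is the assertion. The only thing requiring care is the choice of a single constant $C_o$ serving all three supports simultaneously and uniformly in $N\in\mathbb{N}$ and $t\in[0,T]$; this is the step where one uses that the bound in \eqref{UniformBoundfromLLNpaper} and the analogous bound on $(\tilde c_t^i,\tilde w_t^i)$ are both uniform in $(N,t,i)$, and that taking weak limits of $\mu_t^N$ cannot enlarge the support beyond $K$. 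No delicate estimate is needed beyond this bookkeeping, so I do not expect a genuine obstacle; the content of the lemma is really a corollary of the a priori compactness already established in \cite{NeuralNetworkLLN}.
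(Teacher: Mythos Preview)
Your proposal is correct and follows the same approach as the paper's proof: both argue that $\mu_t^N$, $\tilde\mu_t^N$, and $\bar\mu_t$ are all supported in the same compact set $K$ (using the uniform parameter bound from \cite{NeuralNetworkLLN}), and then read off that $\eta_t^N$ and $\Xi_t^N$ vanish on $K^c$ directly from their definitions as differences of these measures. The paper's version is terser---it simply cites \eqref{UniformBoundfromLLNpaper} and the results of \cite{NeuralNetworkLLN} for all three measures at once---whereas you spell out the Portmanteau step for $\bar\mu_t$ and the Gronwall argument for $\tilde\mu_t^N$, but there is no substantive difference in strategy.
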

\begin{proof}
Due to the uniform bound (\ref{UniformBoundfromLLNpaper}), there exists a compact set $K \subset \mathbb{R}^{1+d}$ such that $\mu_t^N(K^c) = \tilde \mu_t^N(K^c) = \bar \mu_t(K^c) = 0$. It directly follows from the definitions of $\eta_t^N$ and $\Xi_t^N$ that they also vanish outside of the set $K$. For example, for $A \in K^c$, $\eta_t^N(A) = \sqrt{N} \big{(} \mu_t^N ( A) - \bar \mu_t(A) \big{)} = 0$.
\end{proof}

Due to Lemma \ref{CompactLemmaXi}, there is a $C^{\infty}_c$ ``bump" function $b(c,w)$  such that $ b(c,w) c\sigma'(w x)$ is in $C^{\infty}_c( \mathbb{R}^{1+d} \times \mathcal{X})$ and $ b(c,w) c\sigma'(w x) = c \sigma'(wx)$ for every $(c,w) \in K$, the compact set defined in Lemma \ref{CompactLemmaXi}, and $x\in \mathcal{X}$. Similar statements hold for the terms $ \sigma (w x)$ and $c \sigma (w x)$.  See \cite{Fry} for a discussion on bump functions. An example of a bump function is:
\begin{eqnarray}
b(z) &=& \frac{ h \big{(} 2- \frac{ \norm{z} }{ r} \big{)} }{ h \big{(} \frac{\norm{z}}{r} -1 \big{)} + h \big{(} 2 -  \frac{ \norm{z}}{r} \big{)} }, \notag \\
h(v) &=& e^{ - \frac{1}{v^2} } \mathbf{1}_{ v > 0}.
\label{BumpFunctionDefinition}
\end{eqnarray}
The function $b(z)$ is $C^{\infty}_c(\mathbb{R}^{1+d})$, vanishes for $\norm{z} \geq 2 r$, and is one on $\norm{z} \leq r$ \cite{Fry}. For the purposes of this paper, we may choose $r = \sqrt{D} C_{o}$, $B = 3 \sqrt{D} C_{o}$, and $\Theta = (-B, B)^D$. In particular, notice for instance that $b(c,w) c\sigma'(w x)$, and its partial derivatives, vanish on the boundary of $\Theta$.

Going back to (\ref{XiSquared}), the aforementioned discussion implies that we can write for example
\begin{align}
 \la c \sigma'(w \cdot x)  x\cdot \nabla_w f, \Xi^N_{s} \ra&= \la b(c,w) c \sigma'(w \cdot x)  x\cdot \nabla_w f, \Xi^N_{s} \ra\nonumber\\
  \la \sigma(w \cdot x) \partial_c f, \Xi^N_{s} \ra&= \la b(c,w)\sigma(w \cdot x) \partial_c f, \Xi^N_{s} \ra\nonumber
\end{align}



Hence, let us define the operators
\begin{eqnarray}
\mathcal{G}_1 f &=&  b(c,w) c \sigma'(w x) x \cdot \nabla_{w}f, \notag \\
\mathcal{G}_2 f &=&  b(c,w) \sigma(w x)  \partial_{c}f. \notag \\
\label{operatorsG}
\end{eqnarray}

Let $\{ f_a \}_{a=1}^{\infty}$ be a complete orthonormal basis for $W_0^{J_{1},2}(\Theta)$. Since $J_{1} -L > \frac{D}{2}$, the embedding $ W_0^{J_{1},2}(\Theta) \hookrightarrow W_0^{L,2}(\Theta)$ is of Hilbert-Schmidt type and
\begin{eqnarray}
\sum_{a} \norm{ f_a }_L^2 < \infty.
\label{HSbound}
\end{eqnarray}
(See Theorem 6.53 of \cite{Adams} for details.)

Let $f = f_a$ in (\ref{XiSquared}) and sum over all $a \geq 1$. Using Parseval's identity, we now have the bound
\begin{eqnarray}
\mathbb{E} \bigg{[}  \norm{\Xi^N_t }_{-J_{1}}^2 \bigg{]}  &\leq& C_1  \int_0^t  \int_{\mathcal{X}\times\mathcal{Y}} \mathbb{E} \bigg{[} \norm{\Xi^N_s}_{-J_{1}}^2 + \left|\big{(} y -  \la c \sigma(w \cdot x),  \mu^N_{s} \ra \big{)}\right| \left|\la \Xi^N_s, \mathcal{G}_1^{\ast} \Xi^N_s \ra_{-J_{1}} \right| \notag \\
&+& \left|\big{(} y -  \la c \sigma(w \cdot x),  \mu^N_{s} \ra \big{)}\right|\left|  \la \Xi^N_s, \mathcal{G}_2^{\ast} \Xi^N_s \ra_{-J_{1}}   \right|  \bigg{]} \pi(dx,dy)   ds +  C_2 .
\label{XiSquared2}
\end{eqnarray}

Since $\mu_t^N$ takes values in a compact set and $\mathcal{X} \times \mathcal{Y}$ is compact, we have that:
\begin{eqnarray}
\mathbb{E} \bigg{[}  \norm{\Xi^N_t }_{-J_{1}}^2 \bigg{]} &\leq& C_1  \int_0^t  \int_{\mathcal{X}\times\mathcal{Y}} \mathbb{E} \bigg{[} \norm{\Xi^N_s}_{-J_{1}}^2 + | \la \Xi^N_s, \mathcal{G}_1^{\ast} \Xi^N_s \ra_{-J_{1}} |  \notag \\
&+& | \la \Xi^N_s, \mathcal{G}_2^{\ast} \Xi^N_s \ra_{-J_{1}}|     \bigg{]} \pi(dx,dy)   ds +  C_2,
\label{XiSquared3}
\end{eqnarray}
for some unimportant but finite constants $C_1,C_2<\infty$.

The terms $| \la \Xi^N_s, \mathcal{G}_1^{\ast} \Xi^N_s \ra_{-J_{1}} |$  and $| \la \Xi^N_s, \mathcal{G}_2^{\ast} \Xi^N_s \ra_{-J_{1}}|$ must now be analyzed. By the Riesz representation theorem for Hilbert spaces, for $\Xi \in W^{-J_{1},2}$  there exists a unique $\Psi = F(\Xi) \in W_0^{J_{1},2}$ such that,
\begin{eqnarray*}
\la f, \Xi \ra &=& \la f, \Psi \ra_{J_{1}}, \text{ for }f \in W_0^{J_{1},2}.
\end{eqnarray*}


\begin{lemma} \label{LemmaBoundPsiG}
For $\Xi \in W^{-J,2}$ with $J\geq J_{1}=2\ceil*{\frac{D}{2}}+4$, we have
\begin{eqnarray}
\left|\la \Xi, \mathcal{G}_1^{\ast} \Xi \ra_{-J}\right| &\leq& C \norm{\Xi}_{-J}^2, \notag \\
\left|\la \Xi, \mathcal{G}_2^{\ast} \Xi \ra_{-J}\right| &\leq& C \norm{\Xi}_{-J}^2.\notag
\end{eqnarray}
\end{lemma}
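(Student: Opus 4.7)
The plan is to reduce both estimates to a coercivity-type bound on the primal side $W_0^{J,2}$ by exploiting the Riesz isometry. Given $\Xi\in W^{-J,2}$, let $\Psi=F(\Xi)\in W_0^{J,2}$ denote its Riesz representative, so that $\langle f,\Xi\rangle = \langle f,\Psi\rangle_J$ for all $f\in W_0^{J,2}$ and $\|\Psi\|_J=\|\Xi\|_{-J}$. Applying the defining property $\langle \mathcal{G}_i f,\Xi\rangle = \langle f,\mathcal{G}_i^*\Xi\rangle$ at $f=\Psi$ and using the Riesz identification on both sides, I expect the key identity
$$
\langle \Xi,\mathcal{G}_i^*\Xi\rangle_{-J} \;=\; \langle \mathcal{G}_i\Psi,\Psi\rangle_J \;=\; \sum_{|k|\le J}\int_\Theta D^k(\mathcal{G}_i\Psi)\,D^k\Psi\,dz.
$$
It is then enough to prove $|\langle \mathcal{G}_i\Psi,\Psi\rangle_J|\le C\|\Psi\|_J^2$ with a constant uniform in $x\in\mathcal{X}$.

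Next I would write $\mathcal{G}_i\Psi = a_i(c,w,x)\,\partial_{z_j}\Psi$ for the appropriate first-order derivative (either in $c$ or in the $w$-coordinates), where the coefficient $a_i$ is smooth in $(c,w)$ and compactly supported strictly inside $\Theta$, since the bump function $b$ vanishes outside $\{\|z\|\le 2\sqrt{D}C_o\}\subsetneq (-B,B)^D=\Theta$ by the choice of $B$ in Section \ref{WeightedSobolevSpace}. Because $\sigma\in C_b^\infty$ and $\mathcal{X}$ is compact, all derivatives $D^\ell a_i$ are bounded uniformly in $x\in\mathcal{X}$. The Leibniz rule then yields, for every multi-index $|k|\le J$,
$$
D^k(\mathcal{G}_i\Psi) \;=\; a_i\,\partial_{z_j}(D^k\Psi) \;+\; \sum_{0<\ell\le k}\binom{k}{\ell}D^\ell a_i\cdot D^{k-\ell}\partial_{z_j}\Psi.
$$
Each term in the sum with $|\ell|\ge 1$ involves at most $|k|\le J$ derivatives of $\Psi$, so Cauchy–Schwarz together with the coefficient bounds contributes at most $C\|\Psi\|_J^2$.

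The main obstacle, and the reason integration by parts is essential, is the top-order term $\int_\Theta a_i\,\partial_{z_j}(D^k\Psi)\,D^k\Psi\,dz$: it formally involves $|k|+1$ derivatives of $\Psi$ and is not term-by-term controlled by $\|\Psi\|_J^2$. I would symmetrise and integrate by parts,
$$
\int_\Theta a_i\,\partial_{z_j}(D^k\Psi)\,D^k\Psi\,dz \;=\; \tfrac{1}{2}\int_\Theta a_i\,\partial_{z_j}\!\bigl((D^k\Psi)^2\bigr)\,dz \;=\; -\tfrac{1}{2}\int_\Theta (\partial_{z_j} a_i)\,(D^k\Psi)^2\,dz,
$$
with no boundary contribution because $a_i$ has compact support strictly inside $\Theta$. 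This is bounded by $\|\partial_{z_j} a_i\|_{L^\infty}\|D^k\Psi\|_{L^2}^2\le C\|\Psi\|_J^2$. Summing over $|k|\le J$ and combining with the lower-order estimates yields $|\langle \mathcal{G}_i\Psi,\Psi\rangle_J|\le C\|\Psi\|_J^2 = C\|\Xi\|_{-J}^2$, with $C$ depending only on $\|a_i\|_{C^{J+1}(\Theta)}$, which is finite uniformly in $x\in\mathcal{X}$. The argument is identical for $\mathcal{G}_1$ and $\mathcal{G}_2$, as both are first-order operators with $C_c^\infty$ coefficients of the same type.
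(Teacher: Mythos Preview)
Your approach is essentially the paper's: pass to the Riesz representative $\Psi$, reduce to bounding $\langle \mathcal{G}_i\Psi,\Psi\rangle_J$, expand by Leibniz, and kill the top-order term by the symmetrisation/integration-by-parts identity $\int a_i\,\partial_{z_j}(D^k\Psi)\,D^k\Psi = -\tfrac12\int(\partial_{z_j}a_i)(D^k\Psi)^2$. The paper packages the last step as a separate auxiliary lemma (Lemma~\ref{LemmaBoundPsi}), but the content is the same.

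There is one regularity point you should make explicit. For a general $\Xi\in W^{-J,2}$ the Riesz representative $\Psi$ lies only in $W_0^{J,2}$, so $\mathcal{G}_i\Psi$ is a priori only in $W_0^{J-1,2}$; the object $\langle \mathcal{G}_i\Psi,\Psi\rangle_J$ and the top-order derivative $\partial_{z_j}(D^k\Psi)$ in your computation need not be defined. The paper handles this by first restricting to the dense set $\{\Xi\in W^{-J,2}:F(\Xi)\in W_0^{J+1,2}\}$, on which $\mathcal{G}_i\Psi\in W_0^{J,2}$ and all manipulations are legitimate, and then extending the resulting bound $|\langle \Xi,\mathcal{G}_i^*\Xi\rangle_{-J}|\le C\|\Xi\|_{-J}^2$ by continuity. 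Your argument goes through verbatim once you insert this density step (or, equivalently, argue first for $\Psi\in C_0^\infty(\Theta)$ and approximate).
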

\begin{proof}
 Notice that $\{\Xi\in W^{-J,2}: F(\Xi)\in W^{J+1,2}_{0}\}$ is dense in $W^{-J,2}$. For $\Xi \in W^{-J,2}$ such that $\Psi = F(\Xi) \in W_0^{J+1,2}$ we have by definition
\begin{eqnarray}
\la \Xi, \mathcal{G}^{\ast} \Xi \ra_{-J} = \la \Psi, \mathcal{G}^{\ast} \Xi \ra = \la \mathcal{G} \Psi, \Xi \ra  = \la \mathcal{G} \Psi, \Psi \ra_{J},\notag
\end{eqnarray}
since $ \mathcal{G}\Psi\in W_0^{J,2}$ for either $ \mathcal{G}= \mathcal{G}_{1}$ or $ \mathcal{G}= \mathcal{G}_{2}$. 
%
%


By setting $g(c,w,x) = b(c,w) c \sigma'(w x) x$ in Lemma \ref{LemmaBoundPsi} and  $\mathcal{X}$ being a compact set,
\begin{eqnarray}
\left|\la \mathcal{G}_1 \Psi, \Psi \ra_{J}\right|  &\leq& C \norm{\Psi }_{J}^2 = C \norm{ \Xi }_{-J}^2. \notag
\end{eqnarray}
Note that we have also used the fact that uniformly in $c,w,x$
\begin{eqnarray*}
\bigg{|} \frac{\partial^k g}{\partial c^{k_1} \partial k_2}(c,w, x) \bigg{|} \leq C,
\end{eqnarray*}
for any $0 \leq k \leq J$, $k = k_1 + k_2$, and $k_1, k_2 \geq 0$. This is due to $\sigma(\cdot) \in C_b^{\infty}$, $b(c,w) \in C_c^{\infty}$, and $\mathcal{X}$ being a compact set.


Similarly, setting $g(c,w,x) =  b(c,w) \sigma(w x) $ in Lemma  \ref{LemmaBoundPsi} and since $\mathcal{X}$ is a compact set,
\begin{eqnarray}
\left|\la \mathcal{G}_2 \Psi, \Psi \ra_{J}\right|  &\leq& C \norm{\Psi }_{J}^2 = C \norm{ \Xi }_{-J}^2. \notag
\end{eqnarray}

In the previous two bounds, $C<\infty$ is a finite constant that depends on $\mathcal{X}$ as well as on $C_{o}$ from Lemma \ref{CompactLemmaXi}. The proof of the lemma is now complete.
\end{proof}

Lemma \ref{LemmaBoundPsiG} and equation (\ref{XiSquared3}) produce the bound
\begin{eqnarray}
\mathbb{E}  \norm{\Xi^N_t }_{-J_{1}}  &\leq& C_1  \int_0^t  \mathbb{E}  \norm{\Xi^N_s}_{-J_{1}}^2     ds +  C_2 .\notag
\end{eqnarray}
Note that we have again used the fact that $\mathcal{X} \times \mathcal{Y}$ is a compact set.

By Gronwall's Lemma,
\begin{eqnarray}
\sup_{N \in \mathbb{N}} \sup_{t \in [0,T]} \mathbb{E}  \norm{\Xi^N_t}_{-J_{1}}^2  < C.
\label{XiFinalBound}
\end{eqnarray}

Recall that
\begin{eqnarray}
\la f, \eta_t^N \ra = \la f,  \Xi_t^N \ra + \la f, \sqrt{N} ( \tilde \mu_t^N - \bar \mu_t ) \ra.\notag
\end{eqnarray}
Therefore,
\begin{eqnarray}
\la f, \eta_t^N \ra^2 \leq 2 \la f,  \Xi_t^N \ra^2 + 2 \la f, \sqrt{N} ( \tilde \mu_t^N - \bar \mu_t ) \ra^2.
\label{DecompositionFinalEta}
\end{eqnarray}

The second term is  a sequence of i.i.d. random variables. That is,
\begin{eqnarray}
\mathbb{E} \la f, \sqrt{N} ( \tilde \mu_t^N - \bar \mu_t ) \ra^2 &=& \mathbb{E} \bigg{[} \bigg{(} \frac{1}{\sqrt{N}} \sum_{i=1}^N [ f( \tilde c_t^i, \tilde w_t^i) - \la f, \bar \mu_t \ra ] \bigg{)}^2 \bigg{]} \notag \\
&\leq& \mathbb{E} \bigg{[}  \frac{1}{N} \sum_{i=1}^N \sum_{j=1}^N [ f( \tilde c_t^i, \tilde w_t^i) - \la f, \bar \mu_t \ra ] [ f( \tilde c_t^j, \tilde w_t^j) - \la f, \bar \mu_t \ra ] \bigg{]} \notag \\
&= &  \big{(} \la f^2, \bar \mu_t \ra - \la f, \bar \mu_t \ra^2 \big{)} \notag \\
&\leq& C \norm{ f}_L^2,
\label{iidCLTbound}
\end{eqnarray}
where the last inequality follows from the compact support of $\bar \mu$ and the bound (\ref{SupToJnorm}).

Now, we are in position to complete the proof of Lemma \ref{UniformEtaBoundLemma}.
\begin{proof}[Proof of Lemma \ref{UniformEtaBoundLemma}]
Let $f = f_a$ in (\ref{DecompositionFinalEta}) and sum over all $a \geq 1$. The lemma follows from Parseval's identity, (\ref{HSbound}), (\ref{XiFinalBound}), and (\ref{iidCLTbound}).
\end{proof}

\subsection{Compact containment of $\sqrt{N} M^N$}\label{SS:CompactContainmentM}
Let $\mathfrak{F}_{t}$ be the $\sigma-$algebra generated by $\mu^{N}_{s}$ and $M^{N}_{s}$ for $s\leq t$ and note that $\la f, \sqrt{N} M_t^N \ra$ is a $\mathfrak{F}_{t}-$martingale. Indeed, as in Lemma 3.1 of \cite{NeuralNetworkLLN} we get 
\begin{eqnarray}
\mathbb{E} \bigg{[} \la f, \sqrt{N} M_t^N \ra \bigg{|} \mathfrak{F}_{r} \bigg{]} &=& \mathbb{E} \bigg{[} \la f, \sqrt{N} M_t^N  - \sqrt{N} M_r^N \ra \bigg{|}  \mathfrak{F}_{r} \bigg{]}
+ \mathbb{E} \bigg{[} \la f, \sqrt{N} M_r^N \ra \bigg{|} \mathfrak{F}_{r} \bigg{]} \notag \\
&=& \sum_{ k = \floor*{N r}}^{ \floor*{N t} -1 }   \mathbb{E}    \bigg{[} \bigg{(}  \la f, M^{1,N}_k \ra + \la f, M^{2,N}_k \ra \bigg{|} \mathcal{F}_{\floor*{Nr}}^{N} \bigg{]}  + \la f, \sqrt{N} M_r^N \ra \notag \\
&=&  \la f, \sqrt{N} M_r^N \ra.\notag
\end{eqnarray}

and in addition, for every $t\leq T$, the quadratic variation of $\la f, \sqrt{N} M_t^N \ra$ is seen to satisfy
\begin{equation*}
\mathbb{E}\left[\la f, \sqrt{N} M_{\cdot}^N \ra\right]_{t}\leq C  \norm{f}_L^2<\infty,
\end{equation*}

The last inequality is proven using the same approach as in equation (\ref{Mbound}) and using  Lemma 3.1 of \cite{NeuralNetworkLLN} (see also the derivation of (\ref{QuadraticVariationMpreLimit}) later on). Let us also recall that $L = \ceil*{\frac{D}{2}} + 3$.

Then,  Doob's martingale inequality yields
\begin{eqnarray}
\mathbb{E} \bigg{[} \sup_{t \in [0,T]}   \la f, \sqrt{N} M_t^N \ra^2  \bigg{]} \leq C \mathbb{E} \bigg{[} \la f,  \sqrt{N} M_T^N \ra^2 \bigg{]}\leq C  \norm{f}_L^2.
\label{BoundM001100}
\end{eqnarray}

\begin{lemma}\label{L:CompactContainmenrM_process}
If $J_{1} = 2 \ceil*{\frac{D}{2}} + 4$, then there is a constant $C<\infty$ such that
\begin{eqnarray}
\sup_{N \in \mathbb{N}} \mathbb{E} \bigg{[}  \sup_{t \in [0,T]}  \norm{\sqrt{N} M^N_t}_{-J_{1}}^2 \bigg{]} \leq C.
\label{CompactContainmentMEqn}
\end{eqnarray}
\end{lemma}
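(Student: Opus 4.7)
The strategy is to reduce the bound on the $-J_{1}$-norm of $\sqrt{N}M^{N}_{t}$ to the one-dimensional estimate \eqref{BoundM001100}, which has already been established for each fixed test function $f$, by expanding in a convenient orthonormal basis and exploiting the Hilbert--Schmidt embedding $W^{J_{1},2}_{0}(\Theta)\hookrightarrow W^{L,2}_{0}(\Theta)$ via \eqref{HSbound}.

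First, let $\{f_{a}\}_{a\ge1}$ be a complete orthonormal basis of $W^{J_{1},2}_{0}(\Theta)$ (the same basis used in the proof of Lemma \ref{UniformEtaBoundLemma}). For each fixed $\omega$ and $t\in[0,T]$, the element $\sqrt{N}M^{N}_{t}\in W^{-J_{1},2}$ corresponds, by the Riesz representation theorem, to a unique $\Psi^{N}_{t}\in W^{J_{1},2}_{0}$ such that $\langle f,\sqrt{N}M^{N}_{t}\rangle=\langle f,\Psi^{N}_{t}\rangle_{J_{1}}$ for every $f\in W^{J_{1},2}_{0}$, and $\|\sqrt{N}M^{N}_{t}\|_{-J_{1}}=\|\Psi^{N}_{t}\|_{J_{1}}$. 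Parseval's identity then gives
\begin{equation*}
\|\sqrt{N}M^{N}_{t}\|_{-J_{1}}^{2}=\|\Psi^{N}_{t}\|_{J_{1}}^{2}=\sum_{a\ge 1}\langle f_{a},\Psi^{N}_{t}\rangle_{J_{1}}^{2}=\sum_{a\ge 1}\langle f_{a},\sqrt{N}M^{N}_{t}\rangle^{2}.
\end{equation*}

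Next, since the summands are nonnegative, the supremum over $t$ may be pulled inside the sum:
\begin{equation*}
\sup_{t\in[0,T]}\|\sqrt{N}M^{N}_{t}\|_{-J_{1}}^{2}\le \sum_{a\ge 1}\sup_{t\in[0,T]}\langle f_{a},\sqrt{N}M^{N}_{t}\rangle^{2}.
\end{equation*}
Taking expectations, applying the monotone convergence theorem, and using the scalar Doob estimate \eqref{BoundM001100} term-by-term yields
\begin{equation*}
\mathbb{E}\Bigl[\sup_{t\in[0,T]}\|\sqrt{N}M^{N}_{t}\|_{-J_{1}}^{2}\Bigr]\le \sum_{a\ge 1}\mathbb{E}\Bigl[\sup_{t\in[0,T]}\langle f_{a},\sqrt{N}M^{N}_{t}\rangle^{2}\Bigr]\le C\sum_{a\ge 1}\|f_{a}\|_{L}^{2}.
\end{equation*}

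Finally, since $J_{1}-L=\lceil D/2\rceil+1>D/2$, the embedding $W^{J_{1},2}_{0}(\Theta)\hookrightarrow W^{L,2}_{0}(\Theta)$ is of Hilbert--Schmidt type and the bound \eqref{HSbound} gives $\sum_{a}\|f_{a}\|_{L}^{2}<\infty$, uniformly in $N$. This yields the desired uniform bound \eqref{CompactContainmentMEqn}. The only subtle point is the interchange of sup and sum, which is harmless here because of the nonnegativity of each summand; the rest is a direct assembly of previously established pieces, so no step presents a real obstacle.
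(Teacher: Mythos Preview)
Your proof is correct and follows essentially the same approach as the paper: expand in an orthonormal basis of $W_0^{J_1,2}(\Theta)$, apply Parseval's identity, use the scalar Doob estimate \eqref{BoundM001100} term-by-term, and conclude via the Hilbert--Schmidt bound \eqref{HSbound}. Your version is slightly more explicit about the Riesz representation and the interchange of $\sup$ and $\sum$, but the argument is the same.
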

\begin{proof}

Let $\{f_a\}_{a\geq 1}$ be a complete orthonormal basis for $W^{J_{1},2}_{0}(\Theta)$. Using equation (\ref{BoundM001100}), Parseval's identity and (\ref{HSbound}) we get
\begin{align}
\sup_{N \in \mathbb{N}} \mathbb{E} \bigg{[}  \sup_{t \in [0,T]}  \norm{\sqrt{N} M^N_t}_{-J_{1}}^2 \bigg{]} &= \sup_{N \in \mathbb{N}} \mathbb{E} \bigg{[}  \sup_{t \in [0,T]}  \sum_{a \geq 1}  \la f_a, \sqrt{N} M_t^N \ra^2   \bigg{]}  \leq  \sup_{N \in \mathbb{N}} \sum_{ a \geq 1} \norm{f_a}_L^2 < \infty ,\notag
\end{align}
completing the proof of the lemma.
\end{proof}


\subsection{Regularity of $\sqrt{N} M^N$}
Let $\{ f_a \}_{a=1}^{\infty}$ be a complete orthonormal basis for $W_0^{J_{1},2}$  with $J_{1} = 2  \ceil*{\frac{D}{2}} + 4$. For $0 \leq r < t < T$. The equation below is the sum of jump terms at discrete times $\frac{1}{N}, \frac{2}{N}, \ldots, \frac{T}{N}$. By Lemma 3.1 of \cite{NeuralNetworkLLN}  (see also Theorem 3.2 of \cite{Burkholder}) and (\ref{SupToJnorm}) we get that for $0\leq r<t\leq T$ with  $(t - r) < \delta<1$
\begin{eqnarray}
&\phantom{.}& \mathbb{E} \bigg{[} \bigg{(} \la f_{a}, \sqrt{N} M_t^N \ra -  \la f_{a},  \sqrt{N} M_r^N  \ra \bigg{)}^2  \bigg{|} \mathfrak{F}_{r} \bigg{]} \notag \\
&=& N \mathbb{E} \bigg{[}\sum_{ k = \floor*{N r}}^{ \floor*{N t} -1 }      \bigg{(}  \la f_{a}, M^{1,N}_k \ra + \la f_{a}, M^{2,N}_k \ra \bigg{)}^2 \bigg{|} \mathcal{F}_{\floor*{Nr}}^{N}\bigg{]} \notag \\
&\leq& \frac{C}{N} \mathbb{E} \bigg{[}\sum_{ k = \floor*{N r}  }^{ \floor*{N t}-1 }   \sum_{|\alpha|= 1 } \sup_{c,w \in K} |D^{\alpha} f(c,w)  | \bigg{]} \notag \\
&\leq& C_1 \norm{f_{a}}_L^2 \delta  + \frac{C_2}{N} \norm{f_{a}}_L^2,\label{RegularityMf11}
\end{eqnarray}

where $L = \ceil*{\frac{D}{2}} + 3$ and the third line is derived using the same approach as in equation (\ref{Mbound}).
 By Parseval's identity and (\ref{HSbound}), we get Lemma \ref{L:RegularityMprocess} below.

\begin{lemma}\label{L:RegularityMprocess}
Let $J_{1} = 2  \ceil*{\frac{D}{2}} + 4$. If $0\leq r<t\leq T$ are such that  $(t - r) < \delta<1$, then there are unimportant constants $C_1,C_2<\infty$
\begin{eqnarray}
\mathbb{E} \bigg{[}  \norm{ \sqrt{N} M_t^N  -  \sqrt{N} M_r^N  }_{-J_{1}}^2   \bigg{|} \mathfrak{F}_{r} \bigg{]} \leq C_1  \delta  + \frac{C_2}{N}.\label{RegularityM}
\end{eqnarray}
\end{lemma}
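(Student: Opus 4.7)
The plan is straightforward since essentially all of the analytic work has already been carried out in deriving the single-test-function bound (\ref{RegularityMf11}); what remains is to pass from that scalar estimate to a bound on the full dual-Sobolev norm. First, I would fix a complete orthonormal basis $\{f_a\}_{a\geq 1}$ of $W_0^{J_1,2}(\Theta)$, so that by Parseval's identity in the dual pairing we can write
\begin{equation*}
\|\sqrt{N}M_t^N - \sqrt{N}M_r^N\|_{-J_1}^2 = \sum_{a \geq 1} \bigl\langle f_a, \sqrt{N}M_t^N - \sqrt{N}M_r^N\bigr\rangle^2.
\end{equation*}
Conditioning on $\mathfrak{F}_r$ and interchanging the sum with the conditional expectation by monotone convergence reduces the problem term-by-term to the estimate already obtained.

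Next, I would apply the bound (\ref{RegularityMf11}) to each basis function $f_a$, which yields
\begin{equation*}
\mathbb{E}\Bigl[\bigl\langle f_a, \sqrt{N}M_t^N - \sqrt{N}M_r^N\bigr\rangle^2 \,\Big|\, \mathfrak{F}_r\Bigr] \;\leq\; C_1 \|f_a\|_L^2 \,\delta + \frac{C_2}{N} \|f_a\|_L^2,
\end{equation*}
with $L = \lceil D/2 \rceil + 3$. Summing over $a$ and factoring out the common dependence on $\delta$ and $1/N$ gives the bound
\begin{equation*}
\mathbb{E}\Bigl[\|\sqrt{N}M_t^N - \sqrt{N}M_r^N\|_{-J_1}^2 \,\Big|\, \mathfrak{F}_r\Bigr] \;\leq\; \Bigl(C_1 \delta + \frac{C_2}{N}\Bigr) \sum_{a \geq 1} \|f_a\|_L^2.
\end{equation*}

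Finally, I would invoke the Hilbert--Schmidt embedding bound (\ref{HSbound}), which is exactly the reason for choosing $J_1 - L > D/2$: it guarantees that $\sum_{a \geq 1} \|f_a\|_L^2 < \infty$. Absorbing this finite sum into the constants yields the claimed estimate. There is no real obstacle here beyond being careful that the constants $C_1, C_2$ in (\ref{RegularityMf11}) are uniform in $a$ (which they are, since they come from the quadratic variation bound of Lemma 3.1 of \cite{NeuralNetworkLLN} applied through the Sobolev embedding, with the $\|f_a\|_L$ dependence made explicit), and that the $\sigma$-algebra manipulations commute with the $\sum_a$ by Fubini/Tonelli; both are routine.
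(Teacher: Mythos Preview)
Your proposal is correct and follows exactly the same approach as the paper: the paper states immediately before the lemma that ``By Parseval's identity and (\ref{HSbound}), we get Lemma \ref{L:RegularityMprocess} below,'' which is precisely your argument of summing the scalar bound (\ref{RegularityMf11}) over an orthonormal basis of $W_0^{J_1,2}(\Theta)$ and using the Hilbert--Schmidt embedding to control $\sum_a \|f_a\|_L^2$.
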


In particular, (\ref{RegularityM}) implies that the regularity condition of Theorem 4.20 in \cite{Kurtz1975} (equivalently  condition B of Theorem 8.6 of Chapter 3 of \cite{EthierAndKurtz}) is satisfied. (See also Remark 8.7 B of Chapter of \cite{EthierAndKurtz} regarding replacing $\sup_N$ with $\lim_N$.) 


\subsection{Regularity of $\eta^N$}

Let $\{ f_a \}_{a=1}^{\infty}$ be a complete orthonormal basis for $W_0^{J_{2},2}(\Theta)$ for $J_{2} = 3  \ceil*{\frac{D}{2}} + 6$. Recall that $\eta_t^N$ can be written via the decomposition
\begin{eqnarray}
\eta_t^N = \sqrt{N} ( \mu_t^N - \tilde \mu^N_t ) + \sqrt{N} ( \tilde \mu^N_t - \bar \mu_t ).
\label{DeCompositionEtaRepeat}
\end{eqnarray}
Let $\Xi_t^N = \sqrt{N} ( \mu_t^N - \tilde \mu^N_t )$ and $Z_t^N = \sqrt{N} ( \tilde \mu^N_t - \bar \mu_t )$. For $0 \leq r < t < T$,
\begin{eqnarray}
\big{(} \la f_a, \eta_t^N \ra - \la f_a, \eta_r^N \ra \big{)}^2 \leq 2 \big{(} \la f_a, \Xi_t^N \ra -  \la f_a, \Xi_r^N \ra  \big{)}^2 +  2 \big{(} \la f_a, Z_t^N \ra - \la f_a, Z_r^N \ra \big{)}^2.\notag
\end{eqnarray}

Using Young's inequality, the Cauchy-Schwarz inequality, and the fact that $[0,T] \times \mathcal{X} \times \mathcal{Y}$ is a compact set,
\begin{align}
\big{(} \la f_{a}, \Xi_t^N \ra -  \la f_{a}, \Xi_r^N \ra  \big{)}^2 &\leq C \bigg{[} \int_r^t \int_{\mathcal{X}\times\mathcal{Y}}   \bigg{(}  y  \la \sigma(w \cdot x) \partial_c f_{a}, \Xi^N_{s} \ra \bigg{)}^2 \pi(dx,dy)  ds \notag \\
&\quad+ \int_r^t \int_{\mathcal{X}\times\mathcal{Y}}  \bigg{(} y \la c \sigma'(w \cdot x)  x\cdot\nabla_w f_{a}, \Xi^N_{s} \ra \bigg{)}^2 \pi(dx,dy)  ds \notag \\
&\quad+  \int_r^t \int_{\mathcal{X}\times\mathcal{Y}} \bigg{(}    \la c \sigma(w \cdot x),  \mu^N_{s} \ra  \la \sigma(w \cdot x) \partial_c f_{a}, \Xi^N_{s} \ra  \bigg{)}^2 \pi(dx,dy)  ds \notag \\
&\quad+  \int_r^t   \int_{\mathcal{X}\times\mathcal{Y}}  \bigg{(}  \la c \sigma(w \cdot x),  \Xi_s^N \ra \la \sigma(w \cdot x) \partial_c f_{a}, \tilde \mu^N_{s} \ra \bigg{)}^2 \pi(dx,dy)   ds \notag \\
&\quad+   \int_r^t  \int_{\mathcal{X}\times\mathcal{Y}}   \bigg{(}   \la c \sigma(w \cdot x), \sqrt{N} ( \tilde \mu_s^N   - \bar \mu_s ) \ra  \la \sigma(w \cdot x) \partial_c f_{a}, \tilde \mu^N_{s} \ra  \bigg{)}^2 \pi(dx,dy)  ds \notag \\
&\quad+   \int_r^t  \int_{\mathcal{X}\times\mathcal{Y}}   \bigg{(}    \la c \sigma(w \cdot x),  \mu^N_{s} \ra  \la  c \sigma'(w \cdot x) x \cdot \nabla_w f_{a}, \Xi^N_{s} \ra  \bigg{)}^2 \pi(dx,dy)  ds \notag \\
&\quad+  \int_r^t   \int_{\mathcal{X}\times\mathcal{Y}}  \bigg{(}    \la c \sigma(w \cdot x),  \Xi_s^N \ra  \la c \sigma'(w \cdot x) x \cdot \nabla_w f_{a}, \tilde \mu^N_{s} \ra \bigg{)}^2 \pi(dx,dy)   ds \notag \\
&\quad+  \int_r^t \int_{\mathcal{X}\times\mathcal{Y}}   \bigg{(}  \la c \sigma(w \cdot x), \sqrt{N} ( \tilde \mu_s^N   - \bar \mu_s ) \ra  \la c  \sigma'(w \cdot x)  x \cdot \nabla_w f_{a}, \tilde \mu^N_{s} \ra  \bigg{)}^2 \pi(dx,dy)   ds \notag \\
&\quad+  \big{(} \la f_{a}, \sqrt{N}M_t^N \ra - \la f_{a}, \sqrt{N}M_r^N \ra \big{)}^2    + ( R_t^{1,N} - R_r^{1,N} )^2  + ( R_t^{2,N} - R_r^{2,N} )^2 \bigg{]}.
\label{XiRegularity11}
\end{align}

Recall that $J_1 = 2 \ceil*{\frac{D}{2}} + 4$. Since $\pi(dx,dy)$ has compact support, we have 
\begin{eqnarray}
\bigg{|} \la  c \sigma'(w \cdot x) x \cdot \nabla_w f_{a}, \Xi^N_{s} \ra \bigg{|}  
&\leq& \norm{ c \sigma'(w \cdot x) x \cdot \nabla_w f_{a}} _{J_1} \norm{ \Xi^N_s }_{ - J_1 } \notag \\
&\leq&  C \norm{ f_{a} }_{J_1 + 1 }  \norm{ \Xi^N_s }_{ - J_1 },
\label{J1bound}
\end{eqnarray}
and, we have by (\ref{XiFinalBound}) that $\sup_{N \in \mathbb{N}} \sup_{t \in [0,T]} \mathbb{E}  \norm{\Xi^N_t}_{-J_{1}}^2  < C$. This treats the second term on the right hand side of (\ref{XiRegularity11}). The treatment of the first, third, fourth, sixth and seventh term is basically almost identical. The fifth and eighth terms are handled using the (\ref{iidCLTbound}), (\ref{SupToJnorm}) the fact that $\mathcal{X} \times \mathcal{Y}$ is a compact set, and the compact containment of $\mu_t^N$ and $\tilde \mu_t^N$.

Recall also that $R_t^{1,N} = N^{-3/2} \displaystyle \sum_{k=0}^{  \floor*{N t} -1 } G_k^N$ where $|G_k^N|  \leq \norm{f}_L$ where $L = \ceil*{\frac{D}{2}} + 3$. Therefore,
\begin{align}
(R_t^{1,N} - R_r^{1,N} )^2 &= \bigg{(} N^{-3/2} \sum_{k= \floor*{Nr} }^{  \floor*{N t}-1 } G_k^N \bigg{)}^2 \leq \frac{1}{N}  (t - r)^2  \norm{f_{a}}_L^2 + \frac{1}{N^3} \norm{f_{a}}_L^2.
\label{Rregularity11}
\end{align}

In addition, by (\ref{Vbound}) and (\ref{SupToJnorm}) 
\begin{align}
( R_t^{2,N} - R_r^{2,N} )^2 &\leq 2 \big{(} \sqrt{N} V_t^N \big{)}^2 + 2 \big{(} \sqrt{N} V_r^N \big{)}^2 \leq \frac{C}{N} \norm{ f_{a} }_L^2.\label{Rregularity12}
\end{align}

Therefore, using (\ref{XiRegularity11}), (\ref{J1bound}), (\ref{Rregularity11}), (\ref{Rregularity12}), (\ref{iidCLTbound}), (\ref{SupToJnorm}) the fact that $\mathcal{X} \times \mathcal{Y}$ is a compact set, the compact containment of $\mu_t^N$ and $\tilde \mu_t^N$, and (\ref{RegularityMf11}) for the martingale terms, we have for $0\leq r<t\leq T$ with $(t-r)<\delta<1$
\begin{align}
\mathbb{E} \bigg{[} \big{(} \la f_{a}, \Xi_t^N \ra -  \la f_{a}, \Xi_r^N \ra  \big{)}^2 \bigg{|} \mathfrak{F}_{r} \bigg{]} &\leq C  \bigg{[} \sup_{0\leq s\leq T}\mathbb{E}  \norm{\Xi_s^N}_{-J_1}^2  \bigg{(} \norm{f_{a}}_L^2   + \norm{f_{a}}_{J_1+1}^2   \bigg{)}  \delta \notag \\
&\quad +  \norm{f_{a}}_L^2  \delta  +  \frac{1}{N}   \norm{f_{a}}_L^2 \delta^2 \bigg{]} + C_2 \big{(} \frac{1}{N} + \frac{1}{N^3}  \big{)} \norm{f_{a}}_L^2  .
\label{XiRegularity22}
\end{align}

Since we have chosen $J_{2}=3\ceil*{\frac{D}{2}}+6$, we certainly have that $J_{2}>J_{1}+1+\frac{D}{2}$, which then implies that
$\sum_{a\geq 1}\norm{f_{a}}_{J_1+1}^2<\infty$ and $\sum_{a\geq 1}\norm{f_{a}}_{L}^2<\infty$. Hence, using the uniform bound (\ref{XiFinalBound}) and Parseval's identity, we obtain for $(t-r)<\delta<1$ that
\begin{eqnarray}
\mathbb{E} \bigg{[} \norm{ \Xi_t^N  -   \Xi_r^N }_{-J_{2}}^2 \bigg{|} \mathfrak{F}_{r} \bigg{]} \leq C_1 \delta + C_2  \frac{1}{N}.\notag
\end{eqnarray}
Using a similar approach, we can show that  for $(t-r)<\delta<1$ (see also (\ref{iidCLTbound})) there is a finite constant $C<\infty$ such that
\begin{eqnarray}
\mathbb{E} \bigg{[} \norm{ Z_t^N  -   Z_r^N }_{-J_{2}}^2 \bigg{|} \mathfrak{F}_{r} \bigg{]} \leq C \delta.\notag
\end{eqnarray}

\begin{lemma}\label{L:regularityEta}
Let $J_{2} = 3  \ceil*{\frac{D}{2}} + 6$. If $0\leq r<t\leq T$ are such that  $(t - r) < \delta<1$, then there are unimportant constants $C_1,C_2<\infty$ such that
\begin{eqnarray}
\mathbb{E} \bigg{[} \norm{ \eta_t^N  -   \eta_r^N }_{-J_{2}}^2 \bigg{|} \mathfrak{F}_{r}  \bigg{]} \leq C_1 \delta  + C_2  \frac{1}{N}.
\label{RegularityEtaEqn}
\end{eqnarray}
\end{lemma}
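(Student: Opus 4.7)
The plan is to decompose the increment as $\eta_t^N - \eta_r^N = (\Xi_t^N - \Xi_r^N) + (Z_t^N - Z_r^N)$, where $\Xi^N = \sqrt{N}(\mu^N - \tilde\mu^N)$ and $Z^N = \sqrt{N}(\tilde\mu^N - \bar\mu)$, as in \eqref{DeCompositionEta}, and bound each piece separately in $W^{-J_2,2}$. Fixing a complete orthonormal basis $\{f_a\}$ of $W_0^{J_2,2}(\Theta)$, Parseval's identity reduces the task to summing conditional second moments of the scalar increments $\langle f_a, \Xi_t^N - \Xi_r^N\rangle$ and $\langle f_a, Z_t^N - Z_r^N\rangle$ over $a \geq 1$.

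For the $\Xi^N$-piece, I would invoke the representation \eqref{Eq:XiRepresentation} to express $\langle f_a, \Xi_t^N - \Xi_r^N\rangle$ as a sum of time-integrals over $[r,t]$, the martingale increment $\sqrt{N}\langle f_a, M_t^N - M_r^N\rangle$, and the remainders $R^{1,N}, R^{2,N}$. Squaring and applying Cauchy--Schwarz in time extracts a factor of $\delta$, after which each integrand must be controlled. The integrands decompose into bounded scalar factors (via compact support of $\pi$, $\mu^N$, $\tilde\mu^N$, and $\bar\mu$) multiplied by inner products of the form $\langle g\,\partial_c f_a, \Xi_s^N\rangle$ or $\langle g\cdot\nabla_w f_a, \Xi_s^N\rangle$, together with the CLT-type factor $\langle c\sigma(w\cdot x), \sqrt{N}(\tilde\mu_s^N - \bar\mu_s)\rangle$. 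The first class is dominated by $C\norm{f_a}_{J_1+1}\norm{\Xi_s^N}_{-J_1}$, whose expectation is uniformly bounded by Lemma \ref{UniformEtaBoundLemma}; the CLT factor is $O(1)$ in expectation by a direct i.i.d.\ variance computation. The martingale increment is handled by Lemma \ref{L:RegularityMprocess}, and the remainders by the explicit $O(1/N)$ bounds \eqref{Rregularity11}--\eqref{Rregularity12}.

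For the $Z^N$-piece I would exploit its i.i.d.\ structure: $\langle f_a, Z_t^N - Z_r^N\rangle$ is a centered sum of $N$ i.i.d.\ increments scaled by $N^{-1/2}$, so its conditional second moment equals the single-particle variance. Since $(\tilde c^i, \tilde w^i)$ solves an integral equation with uniformly bounded integrand on the compact set $K$, the single-particle increment is at most $C\delta \sup_{K}|\nabla f_a|$, which via the Sobolev embedding is at most $C\delta\norm{f_a}_L$; squaring and using $\delta < 1$ gives the bound $C\delta\norm{f_a}_L^2$. Summing the two contributions over $a$ and invoking that $J_2 - (J_1 + 1) > D/2$ and $J_2 - L > D/2$ makes both embeddings $W_0^{J_2,2}\hookrightarrow W_0^{J_1+1,2}$ and $W_0^{J_2,2}\hookrightarrow W_0^{L,2}$ Hilbert--Schmidt, so $\sum_a \norm{f_a}_{J_1+1}^2 + \sum_a\norm{f_a}_L^2 < \infty$; the choice $J_2 = 3\lceil D/2\rceil + 6$ is dictated precisely by this requirement.

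The main obstacle is the bookkeeping for the $\Xi^N$-piece: \eqref{Eq:XiRepresentation} contains many time-integrals, and squaring generates numerous cross-terms, each of which must be shown to factor into a Sobolev norm of $f_a$ of order at most $J_1 + 1$ times an object with uniformly bounded expectation, never invoking a higher-order norm that would break the Parseval summation. The most delicate terms are those containing $\langle c\sigma(w\cdot x), \sqrt{N}(\tilde\mu_s^N - \bar\mu_s)\rangle$, which is only $O(1)$ in expectation and hence must be isolated via Young's inequality from the $f_a$-dependent inner product before taking expectations, exactly as in the analogous step inside the proof of Lemma \ref{UniformEtaBoundLemma}.
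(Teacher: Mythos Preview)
Your proposal is correct and follows essentially the same route as the paper: the decomposition $\eta^N = \Xi^N + Z^N$, squaring the scalar increments $\langle f_a,\Xi_t^N-\Xi_r^N\rangle$ via the representation \eqref{Eq:XiRepresentation} and Cauchy--Schwarz in time, controlling the resulting pieces by $\norm{f_a}_{J_1+1}\norm{\Xi_s^N}_{-J_1}$ (with \eqref{XiFinalBound}) and $\norm{f_a}_L$, handling the martingale term via \eqref{RegularityMf11} and the remainders via \eqref{Rregularity11}--\eqref{Rregularity12}, treating $Z^N$ by its i.i.d.\ structure, and finally summing over $a$ using the Hilbert--Schmidt embeddings ensured by $J_2 = 3\lceil D/2\rceil + 6$. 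One small remark: the uniform bound on $\mathbb{E}\norm{\Xi_s^N}_{-J_1}^2$ is \eqref{XiFinalBound}, established in the course of proving Lemma~\ref{UniformEtaBoundLemma}, rather than the lemma itself (which concerns $\eta^N$).
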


In particular, (\ref{RegularityEtaEqn}) implies that the regularity condition of Theorem 4.20 in \cite{Kurtz1975} (equivalently  condition B of Theorem 8.6 of Chapter 3 of \cite{EthierAndKurtz}) is satisfied. (See also Remark 8.7 B of Chapter of \cite{EthierAndKurtz} regarding replacing $\sup_N$ with $\lim_N$.) 

\subsection{Compact containment of the fluctuations process $\eta^N$}

The main result of this section is Lemma \ref{CompactcontainmentEtaProcessLemma} below.
\begin{lemma} \label{CompactcontainmentEtaProcessLemma}
If $J_{2} = 3 \ceil*{\frac{D}{2}} + 6$, then there is a constant $C<\infty$ such that
\begin{eqnarray}
\sup_{N \in \mathbb{N}}  \mathbb{E} \sup_{t \in [0,T]} \norm{\eta^N_t}^{2}_{-J_{2}}  < C.
\label{Eq:CompactcontainmentEtaBound}
\end{eqnarray}
In particular, the process $\{\eta_{\cdot}^{N}\}_{N\in\mathbb{N}}$ satisfies the compact containment condition in $W^{-J,2}(\Theta)$ with $J\geq J_{2}+1=3 \ceil*{\frac{D}{2}} + 7$.
\end{lemma}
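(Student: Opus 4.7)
My plan starts from the pre-limit expansion (\ref{EtaEqn1}) and dualizes against $f\in W_0^{J_{2},2}(\Theta)$. The quantity $\la f,\eta_t^N\ra$ splits into (i) $\la f,\eta^N_0\ra$; (ii) the four linear drift terms; (iii) the martingale term $\sqrt{N}\la f,M^N_t\ra$; (iv) the quadratic corrections $\Gamma^{1,N}_t,\Gamma^{2,N}_t$; and (v) the remainders $R^{1,N}_t,R^{2,N}_t$. Because $J_{2}\ge J_{1}$, the continuous embedding $W_0^{J_2,2}\subset W_0^{J_1,2}$ gives $\|\cdot\|_{-J_{2}}\le \|\cdot\|_{-J_{1}}$, so Lemma \ref{UniformEtaBoundLemma} applied at $t=0$ controls (i), and Lemma \ref{L:CompactContainmenrM_process} directly yields $\sup_N\mathbb{E}\sup_{t\le T}\|\sqrt{N}M^N_t\|^{2}_{-J_{2}}\le C$ for (iii).

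For the drift terms (ii), each integrand is of the form $\la \sigma(w\cdot x)\partial_c f,\eta^N_s\ra$ or $\la c\sigma'(w\cdot x)\,x\cdot\nabla_w f,\eta^N_s\ra$ (possibly multiplied by $\la c\sigma,\bar\mu_s\ra$), with the prefactors involving $(y-\la c\sigma,\bar\mu_s\ra)$ bounded by a constant because of the uniform compact support of $\bar\mu$ and $\mathcal{X}\times\mathcal{Y}$. Multiplying by $\sigma(w\cdot x)$ or $c\sigma'(w\cdot x)x$ and applying one spatial derivative to $f$ costs at most one derivative in the Sobolev norm, and since $J_{1}+1\le J_{2}$, Sobolev duality yields
\begin{equation*}
|\la \sigma(w\cdot x)\partial_c f,\eta^N_s\ra|\le \|\sigma(w\cdot x)\partial_c f\|_{J_{1}}\,\|\eta^N_s\|_{-J_{1}}\le C\,\|f\|_{J_{2}}\,\|\eta^N_s\|_{-J_{1}},
\end{equation*}
and similarly for the other integrand. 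Integrating in $s$ and in $\pi(dx,dy)$, each drift term, viewed as a linear functional of $f$, satisfies $\|\mathrm{drift}_t\|_{-J_{2}}\le C\int_0^t\|\eta^N_s\|_{-J_{1}}\,ds$. Taking $\sup_{t\le T}$, squaring, applying Cauchy--Schwarz, and invoking Lemma \ref{UniformEtaBoundLemma} gives $\mathbb{E}\sup_{t\le T}\|\mathrm{drift}_t\|_{-J_{2}}^{2}\le CT\int_0^T\mathbb{E}\|\eta^N_s\|_{-J_{1}}^{2}\,ds\le CT^{2}$.

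The only delicate step is (iv), the $\Gamma$ terms. They are nominally quadratic in $\eta^N$, and naively using $|\la c\sigma,\eta^N_s\ra|\le C\|\eta^N_s\|_{-J_{1}}$ would force a fourth-moment estimate that Lemma \ref{UniformEtaBoundLemma} does not provide. Instead I exploit the following deterministic bound, which uses only the uniform compact support (\ref{UniformBoundfromLLNpaper}), Assumption \ref{A:Assumption1}, and compactness of $\mathcal{X}$:
\begin{equation*}
|\la c\sigma(w\cdot x),\eta^N_s\ra| = \sqrt{N}\,|\la c\sigma(w\cdot x),\mu^N_s-\bar\mu_s\ra| \le 2\sqrt{N}\sup_{(c,w)\in K,\,x\in\mathcal{X}}|c\sigma(w\cdot x)| \le C\sqrt{N}.
\end{equation*}
Plugging this into the defining integrals of $\Gamma^{1,N}_t$ and $\Gamma^{2,N}_t$ exactly cancels the $1/\sqrt{N}$ prefactor, producing an estimate of the same form as for the drift, namely $\sup_{t\le T}\|\Gamma^{i,N}_t\|_{-J_{2}}\le C\int_0^T\|\eta^N_s\|_{-J_{1}}\,ds$. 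The remainders (v) are handled by their construction: $R^{1,N}_t=N^{-3/2}\sum_k G_k^N$ with $|G_k^N|\le C\|f\|_L$, and $R^{2,N}_t=\sqrt{N}V^N_t$ with the bound (\ref{Vbound}), jointly giving $\mathbb{E}\sup_{t\le T}(\|R^{1,N}_t\|_{-J_{2}}^{2}+\|R^{2,N}_t\|_{-J_{2}}^{2})\le C/N$. Assembling the five contributions yields (\ref{Eq:CompactcontainmentEtaBound}).

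To upgrade the uniform $L^{2}$ bound in $W^{-J_{2},2}$ to the compact containment condition in $W^{-J,2}$ with $J\ge J_{2}+1$, I invoke the Rellich--Kondrachov theorem on the bounded domain $\Theta$: the embedding $W_{0}^{J_{2}+1,2}(\Theta)\hookrightarrow W_{0}^{J_{2},2}(\Theta)$ is compact, and by duality so is $W^{-J_{2},2}(\Theta)\hookrightarrow W^{-(J_{2}+1),2}(\Theta)$. Hence the closed ball $\{x:\|x\|_{-J_{2}}\le R\}$ is compact in $W^{-(J_{2}+1),2}$, and Markov's inequality applied to the bound just established provides, for any $\varepsilon>0$, an $R_{\varepsilon}$ such that $\mathbb{P}(\sup_{t\le T}\|\eta^N_t\|_{-J_{2}}\le R_{\varepsilon})\ge 1-\varepsilon$ uniformly in $N$, giving the compact containment condition. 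The main obstacle is the quadratic $\Gamma$ terms; all the other contributions follow routinely from Lemmas \ref{UniformEtaBoundLemma} and \ref{L:CompactContainmenrM_process} once the duality pairings are bookkept correctly.
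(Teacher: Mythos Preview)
Your argument is correct and takes a genuinely different route from the paper's. The paper proves the bound via the decomposition $\eta^N_t=\Xi^N_t+\sqrt{N}(\tilde\mu^N_t-\bar\mu_t)$, where $\tilde\mu^N$ is the empirical measure of i.i.d.\ particles driven by $\bar\mu$; it then reuses the regularity estimates (\ref{XiRegularity11})--(\ref{XiRegularity22}) with $r=0$ to control $\Xi^N$ (whose evolution equation contains no term quadratic in $\eta^N$), and handles $\sqrt{N}(\tilde\mu^N-\bar\mu)$ by the i.i.d.\ variance computation (\ref{iidCLTbound}). You instead work directly with the $\eta^N$ equation (\ref{EtaEqn1}), and the crux is your treatment of the quadratic $\Gamma$ terms: the deterministic bound $|\la c\sigma(w\cdot x),\eta^N_s\ra|\le C\sqrt{N}$---coming only from the compact supports of $\mu^N_s$, $\bar\mu_s$ and $\mathcal{X}$---exactly cancels the $1/\sqrt{N}$ prefactor and reduces $\Gamma^{i,N}$ to a drift-type term controlled by $\int_0^T\|\eta^N_s\|_{-J_1}\,ds$ and Lemma \ref{UniformEtaBoundLemma}. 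This is a clean shortcut; it avoids introducing the auxiliary particle system $(\tilde c^i,\tilde w^i)$ altogether. Two small remarks: (a) your description of the four drift terms in (ii) only explicitly covers the first and third; terms two and four in (\ref{EtaEqn1}) pair $\eta^N_s$ with $c\sigma(w\cdot x)$ and $\bar\mu_s$ with the $f$-dependent factor, but the same duality bound $|\la b\,c\sigma,\eta^N_s\ra|\le C\|\eta^N_s\|_{-J_1}$ (after inserting the bump function, as in Lemma \ref{CompactLemmaXi}) applies, so the conclusion is unchanged; (b) when passing from $|\la\sigma(w\cdot x)\partial_c f,\eta^N_s\ra|$ to $\|\eta^N_s\|_{-J_1}$, the test function $\sigma(w\cdot x)\partial_c f$ is not compactly supported, so you must tacitly insert the bump function $b$ as the paper does---worth stating once. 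The compact containment conclusion via Rellich--Kondrachov on $\Theta$ matches the paper exactly.
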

\begin{proof}
The proof of this statement follows by the representation (\ref{DecompositionFinalEta}) together with the a-priori bounds of Lemma \ref{UniformEtaBoundLemma} and \ref{L:CompactContainmenrM_process}.

Let $\{ f_a \}_{a=1}^{\infty}$ be a complete orthonormal basis for $W_0^{J_{2},2}$  with $J_{2} = 3  \ceil*{\frac{D}{2}} + 6$. Equation  (\ref{DecompositionFinalEta}) with $f=f_{a}$ gives
\begin{eqnarray}
\mathbb{E}\sum_{a\geq 1}\sup_{t\in[0,T]}\la f_a, \eta_t^N \ra^2 &\leq& 2 \mathbb{E}\sum_{a\geq 1}\sup_{t\in[0,T]}\la f_a,  \Xi_t^N \ra^2 + 2 \mathbb{E}\sum_{a\geq 1}\sup_{t\in[0,T]}\la f_a, \sqrt{N} ( \tilde \mu_t^N - \bar \mu_t ) \ra^2.\nonumber
\end{eqnarray}

Following the arguments in equations (\ref{XiRegularity11})-(\ref{XiRegularity22}) with $r=0$  and using Lemma \ref{L:CompactContainmenrM_process} gives
\begin{align}
\mathbb{E} \sum_{a\geq 1}\sup_{t\in[0,T]} \bigg{[} \big{(} \la f_{a}, \Xi_t^N \ra -  \la f_{a}, \Xi_0^N \ra  \big{)}^2 \bigg{]} &\leq C  \bigg{[} \int_0^T \bigg{(} \sum_{a\geq 1}\norm{f_{a}}_L^2  \mathbb{E} \bigg{[} \norm{\Xi_s^N}_{-J_{1}}^2 \bigg{]} + \sum_{a\geq 1}\norm{f_{a}}_{J_1+1}^2  \mathbb{E} \bigg{[} \norm{ \Xi^N_s }_{ - J_1 }^2 \bigg{]} \bigg{)}  ds \notag \\
&\quad+   \sum_{a\geq 1}\norm{f_{a}}_L^2    +  N^{-1}  \sum_{a\geq 1} \norm{f_{a}}_L^2 \bigg{]}\nonumber\\
&\leq C  \bigg{[} \sup_{s\in[0,T]}\mathbb{E} \bigg{[} \norm{\Xi_s^N}_{-J_{1}}^2 \bigg{]} \sum_{a\geq 1}\norm{f_{a}}_L^2   +  \sup_{s\in[0,T]}\mathbb{E} \bigg{[} \norm{ \Xi^N_s }_{ - J_1 }^2 \bigg{]}\sum_{a\geq 1}\norm{f_{a}}_{J_1+1}^2  \notag \\
&\quad+   \sum_{a\geq 1}\norm{f_{a}}_L^2    +  N^{-1}  \sum_{a\geq 1} \norm{f_{a}}_L^2 \bigg{]}\nonumber
\end{align}

Similarly, using now (\ref{iidCLTbound}), we have
\begin{align}
 \mathbb{E}\sum_{a\geq 1}\sup_{t\in[0,T]}\la f_a, \sqrt{N} ( \tilde \mu_t^N - \bar \mu_t ) \ra^2&\leq C \sum_{a\geq 1}\norm{f_a}_{L}^{2}\nonumber
\end{align}

Putting the last displays together we obtain
\begin{align}
\mathbb{E}\sum_{a\geq 1}\sup_{t\in[0,T]}\la f_a, \eta_t^N \ra^2 &\leq  C \bigg{[} \sup_{s\in[0,T]}\mathbb{E} \bigg{[} \norm{\Xi_s^N}_{-J_{1}}^2 \bigg{]} \sum_{a\geq 1}\norm{f_{a}}_L^2   +  \sup_{s\in[0,T]}\mathbb{E} \bigg{[} \norm{ \Xi^N_s }_{ - J_1 }^2 \bigg{]}\sum_{a\geq 1}\norm{f_{a}}_{J_1+1}^2  \notag \\
&\quad+   \sum_{a\geq 1}\norm{f_{a}}_L^2    +  N^{-1}  \sum_{a\geq 1} \norm{f_{a}}_L^2 + \mathbb{E} \bigg{[} \norm{\Xi_0^N}_{-J_{2}}^2 \bigg{]} \bigg{]}.\nonumber
\end{align}

By Lemma \ref{UniformEtaBoundLemma} we have that $\sup_{N\in\mathbb{N}}\sup_{t\in[0,T]}\mathbb{E} \norm{\eta^N_{t}}^{2}_{-J_{1}}\leq C$. Since, $J_{2}>J_{1}+1+\frac{D}{2}>L+\frac{D}{2}$, we also obtain (by Sobolev embedding as before) that $\sum_{a\geq 1}\norm{f_{a}}_{J_1+1}^2<\infty$ and $\sum_{a\geq 1}\norm{f_{a}}_{L}^2<\infty$. In addition, since $J_{2}>J_{1}$ we have that $\norm{\cdot}_{-J_{2}}\leq C \norm{\cdot}_{-J_{1}}$ which then, due to (\ref{XiFinalBound}), leads to $\sup_{N\in\mathbb{N}}\mathbb{E} \bigg{[} \norm{\Xi_0^N}_{-J_{2}}^2 \bigg{]}<\infty$. Hence, we indeed have that
\begin{align}
\sup_{N \in \mathbb{N}}\mathbb{E}\sum_{a\geq 1}\sup_{t\in[0,T]} \la f_{a}, \eta_t^N \ra^{2}\leq C.\nonumber
\end{align}

Hence, by Parseval's identity we obtain
\begin{align}
\sup_{N \in \mathbb{N}}  \mathbb{E} \sup_{t \in [0,T]} \norm{\eta^N_t}_{-J_{2}}^2&=\sup_{N \in \mathbb{N}}\mathbb{E}\sup_{t\in[0,T]}\sum_{a\geq 1} \la f_{a}, \eta_t^N \ra^{2}\leq C.\nonumber
\end{align}

Now, due to the bound in the last display, we obtain that for every $\epsilon>0$, there is a constant $C_{\epsilon}$ such that
\begin{align}
\sup_{N \in \mathbb{N}}  \mathbb{P}\left\{ \sup_{t \in [0,T]} \norm{\eta^N_t}_{-J_{2}}^2>C_{\epsilon}\right\}&\leq \epsilon,\nonumber
\end{align}
and, due to the fact that the set $\left\{\phi\in W^{-(J_{2}+1),2}: \norm{\phi}_{-J_{2}}\leq C_{\epsilon}\right\}$ is a compact subset of $W^{-(J_{2}+1),2}$, we obtain the validity of the compact containment condition for $\{\eta_{\cdot}^{N}\}_{N\in\mathbb{N}}$ in $W^{-J,2}$ with $J\geq J_{2}+1$, as desired.
\end{proof}

\subsection{Relative Compactness of $\eta^N$ and $\sqrt{N} M^N$}

\begin{lemma} \label{RelativeCompactnessLemma}
Let $T>0$ and $J \geq 3  \ceil*{\frac{D}{2}} + 7$. Then, the sequences  $\{\mu^N_{t},t\in[0,T]\}_{N\in\mathbb{N}}$, $\{\eta^N_{t},t\in[0,T]\}_{N\in\mathbb{N}}$ and $\{\sqrt{N} M^N_{t},t\in[0,T]\}_{N\in\mathbb{N}}$ are relatively compact in $D_{\mathcal{M}(\mathbb{R}^{1 + d})}[0,T]$, $ D_{W^{-J,2} }([0,T])$ and $ D_{W^{-J,2} }([0,T])$ respectively.
\end{lemma}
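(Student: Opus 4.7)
The plan is to invoke the Kurtz relative compactness criterion (Theorem 4.20 of \cite{Kurtz1975}, equivalently Theorem 8.6 of Chapter 3 of \cite{EthierAndKurtz}), which for a sequence of c\`adl\`ag processes taking values in a complete separable metric space $S$ reduces relative compactness in $D_S([0,T])$ to verifying (a) a compact containment condition (for each $\epsilon > 0$ and $t \in [0,T]$, a compact $\mathcal{K}_{\epsilon,t}\subset S$ with $\sup_N \mathbb{P}(X^N_t \notin \mathcal{K}_{\epsilon,t}) \leq \epsilon$), and (b) a regularity/Aldous-type estimate on conditional second moments of increments over short intervals. The strategy is to match each of the three sequences with the corresponding bounds established in the previous subsections.

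For $\{\mu^N\}$ in $D_{\mathcal{M}(\mathbb{R}^{1+d})}([0,T])$, relative compactness has already been established in \cite{NeuralNetworkLLN}, so I would simply cite that result.

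For $\{\eta^N\}$ in $D_{W^{-J,2}}([0,T])$ with $J \geq J_2+1 = 3\ceil*{\frac{D}{2}}+7$, the compact containment condition is supplied by Lemma \ref{CompactcontainmentEtaProcessLemma}: the uniform bound $\sup_N \mathbb{E}\sup_{t \in [0,T]} \norm{\eta^N_t}_{-J_2}^2 < \infty$, combined with Markov's inequality, gives $\sup_N \mathbb{P}(\sup_{t \in [0,T]} \norm{\eta^N_t}_{-J_2} > R_\epsilon) \leq \epsilon$ for $R_\epsilon$ large. Since $\Theta$ is bounded, the Rellich--Kondrachov theorem makes the embedding $W_0^{J,2}(\Theta) \hookrightarrow W_0^{J_2,2}(\Theta)$ compact for $J \geq J_2+1$, and by duality the ball $\{\phi \in W^{-J_2,2}(\Theta) : \norm{\phi}_{-J_2} \leq R_\epsilon\}$ is a relatively compact subset of $W^{-J,2}(\Theta)$, yielding the desired compact containment. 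The regularity condition (b) is precisely Lemma \ref{L:regularityEta}, which bounds $\mathbb{E}[\norm{\eta^N_t - \eta^N_r}_{-J_2}^2 \mid \mathfrak{F}_r] \leq C_1 \delta + C_2/N$ for $0 \leq t - r < \delta < 1$; since $J \geq J_2$ implies $\norm{\cdot}_{-J} \leq C \norm{\cdot}_{-J_2}$, this transfers to the $-J$ norm and verifies the Aldous--Kurtz moment estimate (after taking $\delta \to 0$ and $N \to \infty$, or equivalently invoking Remark 8.7(B) of Chapter 3 of \cite{EthierAndKurtz} to replace $\sup_N$ by $\lim_N$).

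For $\{\sqrt{N}M^N\}$ in $D_{W^{-J,2}}([0,T])$, the argument is identical in structure: compact containment follows from Lemma \ref{L:CompactContainmenrM_process} together with the compact embedding $W^{-J_1,2}(\Theta) \hookrightarrow W^{-J,2}(\Theta)$ for $J \geq J_1+1$ (which is implied by $J \geq J_2+1$ since $J_1 < J_2$), while the regularity condition is exactly Lemma \ref{L:RegularityMprocess}. Essentially no obstacles remain at this stage: the analytical work has been discharged in the preceding subsections, and the present lemma is an assembly step. The only point requiring care is a consistent choice of the Sobolev index, which is why I fix $J \geq 3\ceil*{\frac{D}{2}}+7$, large enough to simultaneously accommodate the compact containment for $\eta^N$, for $\sqrt{N}M^N$, and (looking ahead) for the identification of the limiting SPDE and its uniqueness argument.
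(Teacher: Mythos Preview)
Your proposal is correct and follows essentially the same approach as the paper: cite \cite{NeuralNetworkLLN} for $\mu^N$, and for $\eta^N$ and $\sqrt{N}M^N$ combine the compact containment Lemmas \ref{CompactcontainmentEtaProcessLemma}, \ref{L:CompactContainmenrM_process} with the regularity Lemmas \ref{L:regularityEta}, \ref{L:RegularityMprocess}, then invoke Theorem 8.6 of Chapter 3 of \cite{EthierAndKurtz} (with Remark 8.7(B)) / Theorem 4.20 of \cite{Kurtz1975}. Your write-up simply spells out in more detail the compact embedding argument that the paper already places inside the proof of Lemma \ref{CompactcontainmentEtaProcessLemma}.
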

\begin{proof}
Relative compactness of $\mu^N$ was proven in \cite{NeuralNetworkLLN}. Lemmas \ref{L:regularityEta}, \ref{CompactcontainmentEtaProcessLemma} for $\eta^{N}$ and Lemmas \ref{L:CompactContainmenrM_process}, \ref{L:RegularityMprocess} for $\sqrt{N} M^N$ combined with Theorem 8.6 of Chapter 3 of \cite{EthierAndKurtz} (and using Remark 8.7 B of \cite{EthierAndKurtz}), equivalently Theorem 4.20 of \cite{Kurtz1975}, prove the result.
\end{proof}


\section{Continuity properties and identification of the limiting equation} \label{Identification}
\begin{lemma}\label{L:ContinuityOfETA}
Let $J \geq 3  \ceil*{\frac{D}{2}} + 7$. Any limit point of $\{ \eta_t^N, t \in [0,T] \}_{N \in \mathbb{N}}$ is continuous, i.e., it takes values in $C_{W^{-J,2}}([0,T])$.
\end{lemma}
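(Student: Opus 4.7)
The plan is to show that the maximum jump size of $\eta^N$ vanishes as $N\to\infty$ in the $W^{-J,2}$ norm, which by the standard characterization of the Skorokhod topology (e.g.\ Theorem 10.2 of Chapter 3 in \cite{EthierAndKurtz}, or the observation that the set $\{x \in D_{W^{-J,2}}([0,T]) : \sup_t \|x(t)-x(t^-)\|_{-J} = 0\} = C_{W^{-J,2}}([0,T])$ is closed under Skorokhod convergence when the maximum jump vanishes in probability) forces any weak limit point to live in $C_{W^{-J,2}}([0,T])$.

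The first step is to locate and estimate the jumps. Since $\bar\mu_t$ solves the deterministic evolution equation (\ref{EvolutionEquationIntroduction}) and is continuous in $t$, all jumps of $\eta^N_t = \sqrt{N}(\mu^N_t - \bar\mu_t)$ come from $\mu^N_t = \nu^N_{\floor*{Nt}}$, which jumps only at times $t = k/N$ for $k=1,\dots,\floor*{NT}$. Thus
\[
\eta^N_{k/N} - \eta^N_{(k/N)^-} = \sqrt{N}\,(\nu^N_k - \nu^N_{k-1}).
\]
Using the Taylor expansion (\ref{FirstEqn}) together with the SGD updates (\ref{Eq:SGD}) and the uniform bound (\ref{UniformBoundfromLLNpaper}) on parameters together with the compactness of $\mathcal{X}\times\mathcal{Y}$, each increment $|c^i_k-c^i_{k-1}| + \|w^i_k-w^i_{k-1}\|$ is $\mathcal{O}(1/N)$. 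Consequently, for any $f \in W_0^{J,2}(\Theta)$,
\[
\bigl|\la f, \nu^N_k - \nu^N_{k-1} \ra\bigr| \;\leq\; \frac{C}{N}\sum_{|\alpha|\leq 1} \sup_{(c,w) \in K} |D^\alpha f(c,w)| \;\leq\; \frac{C}{N}\,\|f\|_L,
\]
where the last inequality uses the Sobolev embedding (\ref{SupToJnorm}) with $L = \ceil*{D/2}+3$.

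The second step is to convert this pointwise bound into a norm bound in $W^{-J,2}$. Let $\{f_a\}_{a\geq 1}$ be a complete orthonormal basis of $W_0^{J,2}(\Theta)$. By Parseval's identity,
\[
\bigl\|\eta^N_{k/N} - \eta^N_{(k/N)^-}\bigr\|_{-J}^2 \;=\; N \sum_{a\geq 1} \la f_a, \nu^N_k - \nu^N_{k-1} \ra^2 \;\leq\; \frac{C}{N}\sum_{a\geq 1} \|f_a\|_L^2.
\]
Because $J \geq 3\ceil*{D/2}+7 > L + D/2$, the embedding $W_0^{J,2}(\Theta) \hookrightarrow W_0^{L,2}(\Theta)$ is Hilbert--Schmidt (Theorem 6.53 in \cite{Adams}), so $\sum_a\|f_a\|_L^2 < \infty$. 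Taking the supremum over $k$ yields the deterministic estimate
\[
\sup_{t \in [0,T]} \bigl\|\eta^N_t - \eta^N_{t^-}\bigr\|_{-J} \;\leq\; \frac{C}{\sqrt{N}} \;\xrightarrow[N\to\infty]{}\; 0.
\]

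The final step is to invoke a standard Skorokhod-topology criterion: if a sequence $\{X^N\}$ in $D_{W^{-J,2}}([0,T])$ converges in distribution to $X$ and $\sup_{t\leq T}\|X^N_t - X^N_{t^-}\|_{-J} \to 0$ in probability, then $X$ has continuous paths almost surely (see e.g.\ Theorem 10.2, Chapter 3 of \cite{EthierAndKurtz}). Combining this with the relative compactness from Lemma \ref{RelativeCompactnessLemma}, every limit point of $\{\eta^N\}$ takes values in $C_{W^{-J,2}}([0,T])$. I anticipate that no step is technically difficult; the only point requiring attention is the choice of $J$ large enough so that Parseval summability is controlled through the Hilbert--Schmidt embedding, which is exactly why $J \geq 3\ceil*{D/2}+7$ (rather than merely the value dictated by the compact containment in Lemma \ref{CompactcontainmentEtaProcessLemma}) is needed.
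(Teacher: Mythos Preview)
Your argument is correct and in fact more direct than the paper's own proof. The paper reaches the same conclusion---that $\sup_{t\leq T}\|\eta^N_t-\eta^N_{t^-}\|_{-J}^2$ vanishes at rate $1/N$---but it routes the estimate through the auxiliary decomposition $\eta^N=\Xi^N+\sqrt{N}(\tilde\mu^N-\bar\mu)$, observes that the second piece is continuous, and then bounds the jumps of $\Xi^N$ by separately controlling the jump sizes of the martingale $\sqrt{N}M^N$ and of the remainder $R^{1,N}+R^{2,N}$. You bypass this decomposition entirely by bounding the SGD increment $\nu^N_k-\nu^N_{k-1}$ directly from the Taylor expansion and the uniform parameter bounds, which yields a deterministic (not merely $L^2$) estimate on the maximal jump. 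Both approaches use the same Sobolev embedding and Hilbert--Schmidt summation to pass from pointwise bounds to the $W^{-J,2}$ norm; yours simply avoids unpacking the evolution equation for $\Xi^N$.

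One small cosmetic point: your intermediate display uses $\sum_{|\alpha|\leq 1}$, whereas the Taylor expansion \eqref{FirstEqn} also produces second-order terms in $f$. These contribute at order $1/N^2$ and are absorbed in your final bound $C\|f\|_L/N$ since $L=\ceil*{D/2}+3$ controls second derivatives as well, so the conclusion is unaffected.
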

\begin{proof}
In order to prove that any limit point of $\{ \eta_t^N, t \in [0,T] \}_{N \in \mathbb{N}}$ takes values in $C_{W^{-J,2}}([0,T])$, it is sufficient to show that
\begin{eqnarray*}
\lim_{N \rightarrow \infty}  \mathbb{E}\left[   \sup_{ t \leq T}  \norm{ \eta_t^N - \eta_{t^{-}}^N }_{-J}^2  \right]= 0.
\end{eqnarray*}

We again use the decomposition (\ref{DeCompositionEtaRepeat}),
\begin{eqnarray}
   \sup_{ t \leq T}  \norm{ \eta_t^N - \eta_{t^{-}}^N }_{-J}^2 \leq    2 \sup_{ t \leq T}  \norm{ \Xi_t^N - \Xi_{t^{-}}^N }_{-J}^2  + 2 \sup_{ t \leq T}  \norm{ \sqrt{N} ( \tilde \mu_t^N - \bar \mu_t) - \sqrt{N} ( \tilde \mu_{t^{-}}^N   - \bar \mu_{t^{-}} ) }_{-J}^2.\notag
\end{eqnarray}
Since both $\tilde \mu_t^N$ and $\bar \mu_t$ are continuous, $ \norm{ (\tilde \mu_t^N - \bar \mu_t) - ( \tilde \mu_{t^{-}}^N   - \bar \mu_{t^{-}} ) }_{-J} = 0$.

Next, let $\{ f_a \}_{a=1}^{\infty}$ be a complete orthonormal basis for $W_0^{J,2}$. As it follows by (\ref{Eq:XiRepresentation}) the discontinuities of $\la f_{a}, \Xi_t^N \ra$ are those of $\sqrt{N} \la f_{a}, M_t^N \ra$ and $R_t^{1,N} + R_t^{2,N}$. Hence,  we shall have,
\begin{eqnarray}
\la f_{a}, \Xi_t^N \ra - \la f_{a},  \Xi_{t^{-}}^N \ra = \sqrt{N} \la f_{a}, M_t^N \ra - \sqrt{N} \la f_{a}, M_{t^{-}}^N \ra     +  R_{t}^{N} - R_{t^{-}}^N,\notag
\end{eqnarray}
where $R_t^N = R_t^{1,N} + R_t^{2,N}$.

Note that $\la f_{a}, M_t^N \ra $ is a pure jump process where the size of the $k$-th jump is bounded by
\begin{eqnarray}
&\phantom{.}& \bigg{|} \frac{1}{N} \alpha \big{(} y_k -  \la c \sigma(w \cdot x_k),  \nu^N_k \ra \big{)} \la \sigma(w \cdot x_k) \partial_c f_{a}, \nu_k^N \ra   - D^{1,N}_k  \bigg{|}  \notag \\
&+& \bigg{|}  \frac{1}{N} \alpha \big{(} y_k -  \la c \sigma(w \cdot x_k),  \nu^N_k \ra \big{)} \la c  \sigma'(w \cdot x_k) x \cdot \nabla_w f_{a}, \nu_k^N \ra  - D^{2,N}_k \bigg{|}.
\end{eqnarray}

Therefore, for $0 \leq t \leq T$,
\begin{align}
&\bigg{(} \sqrt{N}\la f_{a}, M_t^N \ra - \sqrt{N}\la f_{a}, M_{t^{-}}^N \ra  \bigg{)}^2 \leq\nonumber\\
&\qquad\leq 2  N \sup_{0 \leq k  \leq \floor*{N t} -1}  \bigg{(}  \frac{1}{N} \alpha \big{(} y_k -  \la c \sigma(w \cdot x_k),  \nu^N_k \ra \big{)} \la \sigma(w \cdot x_k) \partial_c f_{a}, \nu_k^N \ra   - D^{1,N}_k \bigg{)}^2 \notag \\
&\qquad+ 2 N\sup_{ 0 \leq k  \leq \floor*{N t} -1}  \bigg{(}  \frac{1}{N} \alpha \big{(} y_k -  \la c \sigma(w \cdot x_k),  \nu^N_k \ra \big{)} \la c  \sigma'(w \cdot x_k) x \cdot \nabla_w f_{a}, \nu_k^N \ra  - D^{2,N}_k \bigg{)}^2. \notag
\end{align}

Due to the uniform bound (\ref{UniformBoundfromLLNpaper}), the bound (\ref{SupToJnorm}), and $\pi(dx,dy)$ having compact support,
\begin{align}
\big{|} \la f_{a}, \sqrt{N} M_t^N \ra - \la f_{a}, \sqrt{N} M_{t^{-}}^N \ra  \big{|}^2  &\leq  \frac{C}{N} \bigg{(} \sum_{ | \alpha| =1} \sup_{(c,w) \in K} | D^{\alpha} f_{a}(c,w) |  \bigg{)}^2 \leq \frac{C}{N} \norm{f_{a}}_L^2.\notag
\end{align}

Similarly,
\begin{eqnarray}
\bigg{(} R_{t}^{N} - R_{t^{-}}^N \bigg{)}^2 \leq \frac{C}{N} \norm{f_{a}}_L^2.\notag
\end{eqnarray}

Therefore, for $0 \leq t \leq T$,
\begin{eqnarray}
\la f_{a}, \Xi_t^N - \Xi_{t^{-}}^N \ra^2 \leq \frac{C}{N} \norm{f_{a}}_L^2.\notag
\end{eqnarray}

Since  $J-L >D/2$, the embedding $ W_0^{J,2}(\Theta) \hookrightarrow W_0^L(\Theta)$ is of Hilbert-Schmidt type (Theorem 6.53 of \cite{Adams}) and we have the bound $\sum_{a} \norm{ f_a }_L^2 < \infty$. Hence, we obtain
\begin{eqnarray}
\mathbb{E}\left[\sup_{ t \leq T}  \norm{ \Xi_t^N - \Xi_{t^{-}}^N }_{-J}^2\right]  \leq \frac{C}{N}.\notag
\end{eqnarray}

Consequently, $\lim_{N \rightarrow \infty}   \mathbb{E}\left[ \sup_{ t \leq T}  \norm{ \eta_t^N - \eta_{t^{-}}^N }_{-J}^2\right] = 0$, concluding the proof of the lemma.
\end{proof}

\begin{lemma} \label{VarianceOfBarM}
Let $J_{1} = 2  \ceil*{\frac{D}{2}} + 4$ and for $(x,y)\in\mathcal{X}\times\mathcal{Y}$, $\mu\in\mathcal{M}(\mathbb{R}^{1+d})$ and $h\in\mathcal{C}^{1}_{0}(\mathbb{R}^{1+d})$ define the operator
\[
\mathcal{R}_{x,y,\mu}[h]=(y - \la c \sigma(w \cdot x), \mu \ra )  \la \nabla(c  \sigma(w \cdot x) ) \cdot  \nabla h, \mu \ra.
\]

Then, for every $f \in W_0^{J_{1},2}(\Theta)$, $\sqrt{N} \la f, M_t^N \ra \in D_{\mathbb{R}}([0,T])$ converges in distribution to a distribution valued mean-zero Gaussian martingale $\bar M_t$ with variance
\begin{eqnarray*}
\textrm{Var} \bigg{[} \la f, \bar M_t \ra \bigg{]}
 &=&\alpha^2 \int_0^t \bigg{[}  \int_{\mathcal{X}\times\mathcal{Y}} \bigg{(} \mathcal{R}_{x,y,\bar\mu_{s}}[f] -\int_{\mathcal{X}\times\mathcal{Y}}  \mathcal{R}_{x,y,\bar\mu_{s}}[f]\pi(dx,dy)\bigg{)}^2  \pi(dx,dy)\bigg{]} ds \notag
\end{eqnarray*}

More generally, for every $f,g \in W_0^{J_{1},2}(\Theta)$, $(\sqrt{N} \la f, M_t^N \ra, \sqrt{N} \la g, M_t^N \ra ) \in D_{\mathbb{R}^2}([0,T])$ converges to a distribution valued mean-zero Gaussian martingale with covariance function
\begin{eqnarray}
\textrm{Cov} \bigg{[} \la f, \bar M_{t} \ra, \la g, \bar M_{t} \ra \bigg{]} &=& \alpha^2 \int_0^{t} \bigg{[}  \int_{\mathcal{X}\times\mathcal{Y}} \bigg{(} \mathcal{R}_{x,y,\bar\mu_{s}}[f] -\int_{\mathcal{X}\times\mathcal{Y}}  \mathcal{R}_{x,y,\bar\mu_{s}}[f]\pi(dx,dy)\bigg{)}\times\nonumber\\
& &\qquad\qquad\times
\bigg{(} \mathcal{R}_{x,y,\bar\mu_{s}}[g] -\int_{\mathcal{X}\times\mathcal{Y}}  \mathcal{R}_{x,y,\bar\mu_{s}}[g]\pi(dx,dy)\bigg{)}
  \pi(dx,dy)\bigg{]} ds.\notag
\end{eqnarray}

\end{lemma}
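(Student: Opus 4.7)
The plan is to apply a martingale central limit theorem (e.g.\ Theorem 7.1.4 of \cite{EthierAndKurtz}) to the c\`adl\`ag martingale $t\mapsto \sqrt{N}\la f, M^N_t\ra$. I would first combine $M^{1,N}_k$ and $M^{2,N}_k$, using the identity $\nabla(c\sigma(w\cdot x))\cdot\nabla f = \sigma(w\cdot x)\partial_c f + c\sigma'(w\cdot x)x\cdot\nabla_w f$, into the single one-step increment
\begin{equation*}
\la f, M^{1,N}_k + M^{2,N}_k\ra = \frac{\alpha}{N}\Bigl(\mathcal{R}_{x_k, y_k, \nu_k^N}[f] - \int_{\mathcal{X}\times\mathcal{Y}} \mathcal{R}_{x,y,\nu_k^N}[f]\,\pi(dx,dy)\Bigr).
\end{equation*}
Because $(x_k,y_k)\sim\pi$ is independent of $\mathcal{F}_k^N$, this increment has conditional mean zero, so $\sqrt{N}\la f, M^N_t\ra$ is indeed a square-integrable martingale (in the piecewise-constant continuous-time extension). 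A direct computation then gives the predictable quadratic variation
\begin{equation*}
A^{N,f}_t := \frac{\alpha^2}{N}\sum_{k=0}^{\floor*{Nt} - 1} V_f(\nu_k^N) = \alpha^2 \int_0^{\floor*{Nt}/N} V_f(\mu_s^N)\,ds,
\end{equation*}
where $V_f(\mu) := \int_{\mathcal{X}\times\mathcal{Y}}\bigl(\mathcal{R}_{x,y,\mu}[f] - \int_{\mathcal{X}\times\mathcal{Y}} \mathcal{R}_{x',y',\mu}[f]\,\pi(dx',dy')\bigr)^2\pi(dx,dy)$.

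Second, I would pass $A^{N,f}_t$ to the limit and show it converges in probability to $\alpha^2\int_0^t V_f(\bar\mu_s)\,ds$, which is precisely the claimed variance. The functional $\mu \mapsto V_f(\mu)$ is continuous in the weak topology and uniformly bounded on the set of probability measures supported in the compact set $K$ from Lemma \ref{CompactLemmaXi}: this uses compactness of $\mathcal{X}\times\mathcal{Y}$, the smoothness of $\sigma$, and the fact that $f \in W_0^{J_1,2}(\Theta)$ has bounded first derivatives on $K$ by Sobolev embedding. Combining this continuity with the LLN $\mu^N \to \bar\mu$ of Theorem \ref{TheoremLLN} and the a priori bound (\ref{UniformBoundfromLLNpaper}) on the support of $\nu^N_k$, bounded convergence delivers the stated limit.

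Third, the jump condition of the martingale CLT is immediate: each jump of $\sqrt{N}\la f, M^N\ra$ is bounded by $C\norm{f}_L/\sqrt{N}$ by exactly the estimate used in the proof of Lemma \ref{L:ContinuityOfETA}, so the maximum jump on $[0,T]$ tends to zero deterministically. Theorem 7.1.4 of \cite{EthierAndKurtz} then gives convergence in distribution of $\sqrt{N}\la f, M^N\ra$ in $D_\mathbb{R}([0,T])$ to a continuous mean-zero Gaussian martingale with the claimed variance. For the covariance, I would apply the vector version of the same theorem to $(\sqrt{N}\la f, M^N\ra, \sqrt{N}\la g, M^N\ra)$, whose cross predictable quadratic variation is $\frac{\alpha^2}{N}\sum_{k=0}^{\floor*{Nt}-1} \mathrm{Cov}_\pi\bigl(\mathcal{R}_{x,y,\nu_k^N}[f], \mathcal{R}_{x,y,\nu_k^N}[g]\bigr)$ and converges to the claimed covariance by the same argument; equivalently, one can polarize the scalar statement applied to $f\pm g$.

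The step I expect to be the main technical obstacle is the convergence $A^{N,f}_t \to \alpha^2\int_0^t V_f(\bar\mu_s)\,ds$. One has to combine the weak-topology convergence $\mu^N \to \bar\mu$ in $D_E([0,T])$ with the joint uniform compact support of $(\mu^N_s)_{s\in[0,T]}$ from (\ref{UniformBoundfromLLNpaper}), use that on this restricted class of measures the weak topology is metrizable and $V_f$ is bounded and continuous, and then exchange the time integral with the $N\to\infty$ limit via bounded convergence. Once this is in hand, the identification of the Gaussian limit and its covariance is a direct application of the martingale CLT.
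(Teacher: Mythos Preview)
Your proposal is correct and follows essentially the same approach as the paper: express the martingale increments via $\mathcal{R}_{x,y,\nu_k^N}[f]$, compute the (predictable) quadratic variation as a Riemann-type sum $\frac{\alpha^2}{N}\sum V_f(\nu_k^N)$, pass to the limit using the LLN $\mu^N\to\bar\mu$ and continuity/boundedness of $V_f$ on compactly supported measures, verify the vanishing-jump condition, and invoke Theorem 7.1.4 of \cite{EthierAndKurtz}; the covariance is obtained via Cramer--Wold/polarization exactly as you suggest. The only organizational difference is that the paper works with the optional quadratic variation $[\sqrt{N}\la f,M^N_\cdot\ra]_t=\sum_k (X_k^N)^2$ and explicitly shows the compensator difference $\sum_k\bigl((X_k^N)^2-\mathbb{E}[(X_k^N)^2\mid\mathcal{F}_k^N]\bigr)$ vanishes in $L^2$, whereas you work directly with the predictable quadratic variation; since the jump sizes are $O(N^{-1/2})$ the two routes are equivalent.
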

\begin{proof}
Recall that
\begin{eqnarray}
\sqrt{N} \la f, M_t^N \ra &=& N^{1/2} \sum_{k=0}^{ \floor*{N t} -1} \bigg{(}   \frac{\alpha}{N} \big{(} y_k -  \la c \sigma(w \cdot x_k),  \nu^N_k \ra \big{)} \la \sigma(w \cdot x_k) \partial_c f, \nu_k^N \ra   - D^{1,N}_k \bigg{)} \notag \\
& &+ N^{1/2} \sum_{k=0}^{ \floor*{N t}-1 } \bigg{(}  \frac{\alpha}{N} \big{(} y_k -  \la c \sigma(w \cdot x_k),  \nu^N_k \ra \big{)} \la c  \sigma'(w \cdot x_k) x \cdot \nabla_w f, \nu_k^N \ra  - D^{2,N}_k \bigg{)} \notag \\
&=& \sum_{k=0}^{ \floor*{N t}-1 }  X^N_k,\nonumber
\end{eqnarray}
where we can write
\begin{eqnarray}
X^N_k & \vcentcolon = & 
\frac{\alpha}{\sqrt{N}}\left[\big{(} y_k -  \la c \sigma(w \cdot x_k),  \nu^N_k \ra \big{)} \la \nabla(c  \sigma(w \cdot x_k)) \cdot \nabla f, \nu_k^N \ra\right.\nonumber\\
& &-\left.\left(\int_{\mathcal{X}\times\mathcal{Y}}  (y - \la c \sigma(w \cdot x), \nu^N_k \ra )  \la \nabla (c  \sigma(w \cdot x) ) \cdot  \nabla f, \nu_k^N \ra   \pi(dx,dy)\right)\right].\notag
\end{eqnarray}

Due to the compact support of $\pi(dx,dy)$ and the uniform bound $|c^i| + \norm{w^i} < C_{o}$, $|X^N_k | \leq C N^{-1/2}$.

$\sqrt{N} \la f, M_t^N \ra$ is a pure jump process and its quadratic variation is
\begin{eqnarray}
\left[\sqrt{N} \la f, M_{\cdot}^N \ra  \right]_{t} &=& \sum_{k=0}^{ \floor*{N t}-1 }  ( X^N_k )^2 \notag\\
  &=&  \sum_{k=0}^{ \floor*{N t}-1 } \mathbb{E} \big{[}  ( X^N_k )^2  \big{|} \mathcal{F}_k^{N} \big{]} +   \sum_{k=0}^{ \floor*{N t}-1 } \bigg{(} ( X_k^N )^2 -  \mathbb{E} \big{[}  ( X^N_k )^2  \big{|} \mathcal{F}_k^{N} \big{]} \bigg{)}.
\label{QuadraticVariationMpreLimit}
\end{eqnarray}

The first term on the right hand side of (\ref{QuadraticVariationMpreLimit}) becomes:
\begin{eqnarray}
&\phantom{.}& \sum_{k=0}^{ \floor*{N t}-1 }  \mathbb{E} \bigg{[} ( X^N_k )^2  \big{|} \mathcal{F}_{k}^{N} \bigg{]}  = \frac{\alpha^2}{N} \sum_{k=0}^{ \floor*{N t}-1 }  \bigg{[}  \int_{\mathcal{X}\times\mathcal{Y}} \bigg{(} (y - \la c \sigma(w \cdot x), \nu^N_k \ra )  \la \nabla (c  \sigma(w \cdot x) ) \cdot  \nabla f, \nu_k^N \ra \bigg{)}^2  \pi(dx,dy)          \notag \\
 & &-   \left(\int_{\mathcal{X}\times\mathcal{Y}}  (y - \la c \sigma(w \cdot x), \nu^N_k \ra )  \la \nabla (c  \sigma(w \cdot x) ) \cdot  \nabla f, \nu_k^N \ra   \pi(dx,dy)\right)^{2}\bigg{]}\nonumber\\
&=&  \alpha^2 \int_0^t  \int_{\mathcal{X}\times\mathcal{Y}} \bigg{(} (y - \la c \sigma(w \cdot x), \mu^N_s \ra )  \la \nabla(c  \sigma(w \cdot x) ) \cdot  \nabla f, \mu_s^N \ra \bigg{)}^2  \pi(dx,dy)   ds \nonumber\\
   & &\quad-\alpha^2 \int_0^t \left( \int_{\mathcal{X}\times\mathcal{Y}}  (y - \la c \sigma(w \cdot x), \mu^N_s \ra )  \la \nabla(c  \sigma(w \cdot x) ) \cdot  \nabla f, \mu_s^N \ra   \pi(dx,dy) \right)^{2}  ds+ \mathcal{O}(N^{-1})\nonumber\\
&=& \alpha^2 \int_0^t  \int_{\mathcal{X}\times\mathcal{Y}} \mathcal{R}^{2}_{x,y,\mu^{N}_{s}}[f]  \pi(dx,dy)   ds -
\alpha^2 \int_0^t \left( \int_{\mathcal{X}\times\mathcal{Y}}  \mathcal{R}_{x,y,\mu^{N}_{s}}[f]   \pi(dx,dy) \right)^{2}  ds+ \mathcal{O}(N^{-1})  \nonumber\\
&=& \alpha^2 \int_0^t  \int_{\mathcal{X}\times\mathcal{Y}}\left( \mathcal{R}_{x,y,\mu^{N}_{s}}[f]     -
  \int_{\mathcal{X}\times\mathcal{Y}}  \mathcal{R}_{x,y,\mu^{N}_{s}}[f]   \pi(dx,dy) \right)^{2}\pi(dx,dy)  ds+ \mathcal{O}(N^{-1})
\label{Xmda2}
 \end{eqnarray}

The second term on the right hand side of (\ref{QuadraticVariationMpreLimit}) can be bounded as follows:
\begin{eqnarray}
&\phantom{.}& \mathbb{E} \bigg{[} \bigg{(}  \sum_{k=0}^{ \floor*{N t}-1 } \bigg{[} ( X_k^N )^2 -  \mathbb{E} \big{[}  ( X^N_k )^2  \big{|} \mathcal{F}_k^{N} \big{]} \bigg{]} \bigg{)}^2 \bigg{]} \notag \\
&=&  \sum_{j=0}^{   \floor*{N t}-1 }   \sum_{k=0}^{ \floor*{N t}-1 }  \mathbb{E} \bigg{[} \bigg{(} ( X_k^N )^2 -  \mathbb{E} \big{[}  ( X^N_k )^2  \big{|} \mathcal{F}_k^{N} \big{]} \bigg{)}  \bigg{(} ( X_j^N )^2 -  \mathbb{E} \big{[}  ( X^N_j )^2  \big{|} \mathcal{F}_j^{N} \big{]} \bigg{)}  \bigg{]}  \notag \\
&=&  \sum_{k=0}^{ \floor*{N t}-1 }  \mathbb{E} \bigg{[}  \bigg{(} ( X_k^N )^2 -  \mathbb{E} \big{[}  ( X^N_k )^2  \big{|} \mathcal{F}_k^{N} \big{]} \bigg{)}^2 \bigg{]} \notag \\
& &+  2 \sum_{j=0}^{   \floor*{N t}-2 }   \sum_{k=j+1}^{ \floor*{N t}-1 }  \mathbb{E} \bigg{[} \mathbb{E} \bigg{[} ( X_k^N )^2 -  \mathbb{E} \big{[}  ( X^N_k )^2  \big{|} \mathcal{F}_k^{N} \big{]} \bigg{|} \mathcal{F}_{k}^{N} \bigg{]}  \bigg{(} ( X_j^N )^2 -  \mathbb{E} \big{[}  ( X^N_j )^2  \big{|} \mathcal{F}_j^{N} \big{]} \bigg{)}  \bigg{]} \notag \\
&=&  \sum_{k=0}^{ \floor*{N t}-1 }  \mathbb{E} \bigg{[}  \bigg{(} ( X_k^N )^2 -  \mathbb{E} \big{[}  ( X^N_k )^2  \big{|} \mathcal{F}_k^{N} \big{]} \bigg{)}^2 \bigg{]}  \notag \\
&\leq&  C \sum_{k=0}^{ \floor*{N t}-1 }  N^{-2} \leq \frac{C}{N},\nonumber
\end{eqnarray}
where the last inequality uses the bound $|X^N_k | \leq C N^{-1/2}$.

Therefore, since $\mu^N \overset{p} \rightarrow \bar \mu$ in $D_E([0,T])$ and by applying the continuous mapping theorem to (\ref{Xmda2}), we have that for each $t \in [0,T]$,
\begin{eqnarray}
&\phantom{.}& \left[ \sqrt{N} \la f, M_{\cdot}^N \ra  \right]_{t}  \overset{p} \rightarrow
\alpha^2 \int_0^t  \int_{\mathcal{X}\times\mathcal{Y}}\left( \mathcal{R}_{x,y,\bar{\mu}_{s}}[f]     -
  \int_{\mathcal{X}\times\mathcal{Y}}  \mathcal{R}_{x,y,\bar{\mu}_{s}}[f]   \pi(dx,dy) \right)^{2}\pi(dx,dy)  ds
 \label{QuadraticVariationMFinalLimit}
\end{eqnarray}
as $N \rightarrow \infty$.

Using the same approach as in Lemma \ref{L:ContinuityOfETA}, we also have that
\begin{eqnarray}
\lim_{N \rightarrow \infty}  \mathbb{E} \left[   \sup_{ t \leq T} \bigg{|}  \sqrt{N} \la f, M_t^N \ra  -  \sqrt{N} \la f, M_{t^{-}}^N \ra \bigg{|}   \right]= 0.
\label{Mcontinuity2222}
\end{eqnarray}

The first statement of this lemma follows from (\ref{QuadraticVariationMFinalLimit}), (\ref{Mcontinuity2222}), and Theorem 7.1.4 of \cite{EthierAndKurtz}. The convergence of $(\sqrt{N} \la f, M_t^N \ra, \sqrt{N} \la g, M_t^N \ra )$ follows by a similar procedure and the Cramer-Wold theorem.

\end{proof}

\begin{lemma} \label{LemmaIdentifyLimit}
Let $J \geq 3 \ceil*{\frac{D}{2}} + 7$. Any limit point $\bar \eta$ must satisfy the stochastic evolution equation
\begin{eqnarray}
\la f, \bar  \eta_t \ra &=& \la f, \bar \eta_0 \ra + \int_0^t  \bigg{(} \int_{\mathcal{X}\times\mathcal{Y}}  \alpha \big{(} y -  \la c \sigma(w \cdot x), \bar \mu_{s} \ra \big{)} \la \sigma(w \cdot x) \partial_c f, \bar \eta_{s} \ra \pi(dx,dy)  \bigg{)} ds\notag \\
& &- \alpha \int_0^t  \bigg{(} \int_{\mathcal{X}\times\mathcal{Y}}   \la c \sigma(w \cdot x), \bar \eta_s \ra \la \sigma(w \cdot x) \partial_c f, \bar \mu_{s} \ra \pi(dx,dy)  \bigg{)} ds\notag \\
& &+ \int_0^t \bigg{(} \int_{\mathcal{X}\times\mathcal{Y}}   \alpha \big{(} y -  \la c \sigma(w \cdot x),  \bar \mu_{s} \ra \big{)} \la c  \sigma'(w \cdot x) x \cdot \nabla_w f, \bar \eta_{s} \ra \pi(dx, dy) \bigg{)}ds \notag \\
& &- \alpha \int_0^t \bigg{(} \int_{\mathcal{X}\times\mathcal{Y}}  \la c \sigma(w \cdot x),  \bar \eta_{s} \ra \big{)} \la c  \sigma'(w \cdot x) x \cdot \nabla_w f, \bar \mu_{s} \ra \pi(dx, dy) \bigg{)}ds \notag \\
& &+ \la f, \bar M_t \ra,
\label{SPDEeta}
\end{eqnarray}
for every $f \in W_0^{J,2}(\Theta)$.
\end{lemma}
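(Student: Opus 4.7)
The plan is to pass to the limit in the pre-limit identity (\ref{EtaEqn1}) along a convergent subsequence, identifying each term with the corresponding term in (\ref{SPDEeta}). By Lemma \ref{RelativeCompactnessLemma}, the joint sequence $(\mu^N, \eta^N, \sqrt{N}M^N)$ is relatively compact in $D_{\mathcal{M}(\mathbb{R}^{1+d})}([0,T]) \times D_{W^{-J,2}}([0,T]) \times D_{W^{-J,2}}([0,T])$, so I extract a subsequence converging in distribution to some $(\bar\mu, \bar\eta, \bar M)$ (with $\bar\mu$ being deterministic and given by Theorem \ref{TheoremLLN}, and the covariance structure of $\bar M$ being given by Lemma \ref{VarianceOfBarM}). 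By the Skorokhod representation theorem I may assume this convergence is almost sure on a common probability space. Moreover, by Lemma \ref{L:ContinuityOfETA} the limit $\bar\eta$ has continuous paths in $W^{-J,2}$, so the a.s.\ convergence of $\eta^N$ to $\bar\eta$ in $D_{W^{-J,2}}([0,T])$ actually upgrades to uniform convergence on $[0,T]$.

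Next I fix a test function $f \in W_0^{J,2}(\Theta)$ and treat the terms in (\ref{EtaEqn1}) one at a time. The remainder terms $R_t^{1,N}$ and $R_t^{2,N}$ are uniformly $\mathcal{O}(N^{-1/2})$ by their definitions, so they vanish. For $\Gamma^{1,N}_t$ and $\Gamma^{2,N}_t$, I use the $N^{-1/2}$ prefactor combined with the uniform bound $\sup_N \mathbb{E}\sup_{t \le T}\|\eta^N_t\|_{-J_2}^2 < \infty$ from Lemma \ref{CompactcontainmentEtaProcessLemma}: since $c\sigma(w\cdot x)$, $\sigma(w\cdot x)\partial_c f$, and $c\sigma'(w\cdot x)x\cdot \nabla_w f$ (after multiplication by the bump cutoff from Section 3) all lie in $W_0^{J,2}(\Theta)$ with norm bounded uniformly in $(x,y) \in \mathcal{X}\times\mathcal{Y}$, each pairing $\la \cdot, \eta^N_s\ra$ is bounded by $C\|\eta^N_s\|_{-J}$, and therefore $\Gamma^{1,N}_t + \Gamma^{2,N}_t \to 0$ in $L^1$. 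The martingale term $\sqrt{N}\la f, M^N_t\ra$ converges in distribution to $\la f, \bar M_t\ra$ by Lemma \ref{VarianceOfBarM} (jointly with $\eta^N$ by the same Skorokhod coupling, possibly after passing to a further subsequence).

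The main obstacle, and the step deserving most care, is the passage to the limit in the drift integrals. The integrands involve bilinear pairings of $\eta^N_s$ with $\bar\mu_s$ and with $\mu^N_s$. For each fixed $(x,y)$, the maps $\eta \mapsto \la \sigma(w\cdot x)\partial_c f, \eta\ra$ and $\eta \mapsto \la c\sigma'(w\cdot x)x\cdot\nabla_w f, \eta\ra$ are continuous linear functionals on $W^{-J,2}$, because $\sigma(w\cdot x)\partial_c f$ and $c\sigma'(w\cdot x)x\cdot\nabla_w f$ (multiplied by the bump function of Section 3) lie in $W_0^{J,2}(\Theta)$ with norms bounded uniformly in $(x,y)$. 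Similarly $\eta \mapsto \la c\sigma(w\cdot x), \eta\ra$ is continuous on $W^{-J,2}$. Combined with the uniform convergence of $\eta^N$ to $\bar\eta$, the convergence $\mu^N \to \bar\mu$, and the uniform bound in Lemma \ref{CompactcontainmentEtaProcessLemma}, the dominated convergence theorem applied under the outer $ds\, \pi(dx,dy)$ integral yields convergence of each drift term to the corresponding term in (\ref{SPDEeta}). The key technical point here is that the coefficients $\bar\mu_s$ that replace $\mu^N_s$ in the drift come from the law of large numbers: whenever $\mu^N_s$ appears multiplying $\la \cdot, \eta^N_s\ra$, I rewrite it as $\bar\mu_s + (\mu^N_s - \bar\mu_s)$ and note that $\mu^N - \bar\mu \to 0$ uniformly in probability by Theorem \ref{TheoremLLN}, while $\la \cdot, \eta^N\ra$ is uniformly bounded in $L^2$.

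Finally I note that $\la f, \eta^N_0\ra \to \la f, \bar\eta_0\ra$ holds by the classical CLT applied to the i.i.d.\ initialization. Assembling all the terms, the limit identity (\ref{SPDEeta}) holds almost surely for every $f \in W_0^{J,2}(\Theta)$, first pointwise in $t$ and then, by continuity of both sides, simultaneously for all $t \in [0,T]$. This completes the identification.
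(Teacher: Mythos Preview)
Your argument is correct and follows essentially the same route as the paper: pass to the limit term-by-term in the pre-limit identity (\ref{EtaEqn1}), using the uniform moment bound on $\eta^N$ to kill $\Gamma^{1,N}$ and $\Gamma^{2,N}$, the explicit $\mathcal{O}(N^{-1/2})$ size to kill $R^{1,N}, R^{2,N}$, Lemma \ref{VarianceOfBarM} for the martingale, and joint relative compactness for the drift. The paper packages the last step by invoking Theorem 5.5 of \cite{KurtzAndProtter}, whereas you make it explicit via Skorokhod representation plus dominated convergence; both are standard and equivalent here.

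One small point: your ``key technical point'' paragraph about rewriting $\mu^N_s = \bar\mu_s + (\mu^N_s - \bar\mu_s)$ in the drift is unnecessary, because in (\ref{EtaEqn1}) the four drift integrals already carry $\bar\mu_s$ rather than $\mu^N_s$; the difference was absorbed precisely into the $\Gamma^{1,N}, \Gamma^{2,N}$ terms when (\ref{EtaEqn1}) was derived in Section \ref{PreliminaryCalculations}. So once $\Gamma^{1,N}, \Gamma^{2,N} \to 0$ is established, the drift limits require only the convergence $\eta^N \to \bar\eta$ in $W^{-J,2}$ and no further use of the law of large numbers for $\mu^N$.
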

\begin{proof}
The result can be proven by considering the pre-limit evolution equation (\ref{EtaEqn1}). For each $f \in W_0^{J,2}(\Theta)$, $\displaystyle \sup_{t \in [0,T]} R_t^N \overset{p} \rightarrow 0$. Due to the uniform bound $\displaystyle \sup_{N \in \mathbb{N}} \sup_{t \in [0,T]} \mathbb{E}[  \norm{ \eta_t }_{-J_{1}}^2 ] < C$, it can be shown that $\Gamma^{1,N}_t\overset{p} \rightarrow 0$ and $\Gamma^{2,N}_t \overset{p} \rightarrow 0$ uniformly in $t \in [0,T]$. Indeed, recall that
\begin{eqnarray}
 \Gamma^{1,N}_t &=& \frac{1}{\sqrt{N}} \int_0^t  \int_{\mathcal{X}\times\mathcal{Y}} - \alpha   \la c \sigma(w \cdot x), \eta_s^N \ra \la \sigma(w \cdot x) \partial_c f, \eta^N_{s} \ra \pi(dx,dy)   ds \notag \\
\Gamma^{2,N}_t &=& \frac{1}{\sqrt{N}} \int_0^t  \int_{\mathcal{X}\times\mathcal{Y}}  -\alpha   \la c \sigma(w \cdot x), \eta_s^N \ra  \la c \sigma'(w \cdot x) x \cdot \nabla_w f, \eta^N_{s} \ra \pi(dx,dy) ds.\notag
\end{eqnarray}

Recall that $J_1 = 2  \ceil*{\frac{D}{2}} + 4$. Then, using the compactness of $\mathcal{X} \times \mathcal{Y}$, the bound (\ref{UnfiormEtaBound}), and Young's inequality
\begin{eqnarray}
\mathbb{E}  \bigg{[} \sup_{t\in[0,T]}\big{|}  \Gamma^{1,N}_t  \big{|}  \bigg{]} &\leq& \mathbb{E} \bigg{[} \frac{1}{\sqrt{N}} \int_0^T  \int_{\mathcal{X}\times\mathcal{Y}} \norm{ c \sigma (w \cdot x) }_{J_{1}} \norm{ \eta_s^N}_{-J_{1}} \norm{  \sigma(w \cdot x) \partial_c f}_{J_1} \norm{\eta^N_{s}}_{-J_1} \pi(dx,dy) ds \bigg{]} \notag \\
&\leq&  \frac{C}{\sqrt{N}}   \norm{  f}^{2}_{J_1+1}  \leq \frac{C}{\sqrt{N}}.\notag
\end{eqnarray}

Similarly, $ \mathbb{E} \bigg{[} \sup_{t\in[0,T]}\big{|} \Gamma^{2,N}_t  \big{|} \bigg{]} \leq \frac{C}{\sqrt{N}}$.  Therefore, $\Gamma^{1,N}_t\overset{p} \rightarrow 0$ and $\Gamma^{2,N}_t \overset{p} \rightarrow 0$ uniformly in $t \in [0,T]$.

By Lemma \ref{RelativeCompactnessLemma} we have that the sequence $(\mu^{N}_t, \eta^{N}_{t},\sqrt{N}M^{N}_{t})$ is relatively compact in $D_{\mathcal{M}(\mathbb{R}^{1 + d})\times W^{-J,2}\times W^{-J,2}}[0,T]$. Denoting by $(\bar{\mu}_{t},\bar{\eta}_{t},\bar{M}_{t})$ a limiting point of an appropriate subsequence and due to the linearity of the involved operators in (\ref{EtaEqn1}) we obtain by Theorem 5.5 in \cite{KurtzAndProtter} and Lemma \ref{VarianceOfBarM}
that  $\bar \eta$ satisfies (\ref{SPDEeta}).
\end{proof}

\section{Uniqueness of the stochastic evolution equation} \label{Uniqueness}

The limiting distribution $\bar \eta_t$ satisfies the stochastic evolution equation (\ref{SPDEeta}). Suppose (\ref{SPDEeta}) does not have a unique solution. Then, there are at least two solutions $\bar \eta^1$ and $\bar \eta^2$ which satisfy (\ref{SPDEeta}). Define $\Phi_t = \bar \eta^1_t - \bar \eta^2_t$. Our goal is to show that $\|\Phi_{t}\|_{-J}=0$ for all $t\leq T$. $\Phi_t$ satisfies the deterministic equation

\begin{eqnarray}
\la f, \Phi_t \ra &=& \alpha \int_0^t \int_{\mathcal{X} \times \mathcal{Y}} \pi(dx,dy) \bigg{[} \big{(} y - \la c \sigma(w x), \bar \mu_s \ra  \big{)} \la c \sigma'(w x) x \cdot \nabla_{w}f, \Phi_s \ra + \big{(} y - \la c \sigma(w x), \bar \mu_s \ra  \big{)} \la \sigma(w x) \partial_{c}f, \Phi_s \ra  \bigg{]} ds \notag \\
&-& \alpha \int_0^t \int_{\mathcal{X}\times \mathcal{Y}} \pi(dx,dy) \bigg{[} \la c \sigma(w x), \Phi_s \ra  \la c \sigma'(w x) x \cdot \nabla_{w}f, \bar \mu_s \ra + \la c \sigma(w x),  \Phi_s \ra \la \sigma(w x) \partial_{c}f, \bar \mu_s \ra  \bigg{]} ds, \notag \\
\la f, \Phi_0 \ra &=& 0.\notag
\end{eqnarray}

Therefore,
\begin{eqnarray}
 \la f, \Phi_t \ra^2 &=& 2\alpha \int_0^t \int_{\mathcal{X} \times \mathcal{Y}} \pi(dx,dy)  \bigg{[} \big{(} y - \la c \sigma(w x), \bar \mu_s \ra  \big{)} \la c \sigma'(w x) x \cdot \nabla_{w}f, \Phi_s \ra \nonumber\\
 & &\hspace{5cm}+ \big{(} y - \la c \sigma(w x), \bar \mu_s \ra  \big{)} \la \sigma(w x)  \partial_{c}f, \Phi_s \ra  \bigg{]}  \la f, \Phi_s \ra ds \notag \\
&-&2\alpha \int_0^t \int_{\mathcal{X} \times \mathcal{Y}} \pi(dx,dy) \bigg{[}  \la c \sigma(w x), \Phi_s \ra   \la c \sigma'(w x) x \cdot \nabla_{w}f, \bar \mu_s \ra + \la c \sigma(w x),  \Phi_s \ra   \la \sigma(w x)  \partial_{c}f, \bar \mu_s \ra  \bigg{]}  \la f, \Phi_s \ra ds. \notag
\label{DiffPhi}
\end{eqnarray}

Using Young's inequality, the fact that $\bar \mu$ takes values in a compact set, $\pi(dx,dy)$ has compact support, and the bound (\ref{SupToJnorm}),
\begin{eqnarray}
 \la f, \Phi_t \ra^2 &\leq& \alpha \int_0^t \int_{\mathcal{X} \times \mathcal{Y}} \pi(dx,dy)  \bigg{[} \big{(} y - \la c \sigma(w x), \bar \mu_s \ra  \big{)} \la c \sigma'(w x) x \nabla_{w}f, \Phi_s \ra \nonumber\\
 & &\hspace{5cm}+ \big{(} y - \la c \sigma(w x), \bar \mu_s \ra  \big{)} \la \sigma(w x)  \partial_{c}f, \Phi_s \ra  \bigg{]}  \la f, \Phi_s \ra ds \notag \\
& &+ C \int_0^t \big{(}  \la f, \Phi_s \ra^2  + \norm{f}_{L}^2 \norm{ \Phi_s}_{-J}^2 \big{)} ds,
\label{DiffPhi2}
\end{eqnarray}
where $L = \ceil*{\frac{D}{2}} + 3$ and $J \geq 3  \ceil*{\frac{D}{2}} + 7$.

\begin{lemma} \label{CompactLemmaEta}
For any $f \in W_0^{J,2}(\Theta)$ and every $t \in [0,T]$,
\begin{eqnarray}
\la f, \bar{\eta}_t \ra = \la  b f, \bar{\eta}_t \ra,\notag
\end{eqnarray}
where $b$ is the bump function defined in equation (\ref{BumpFunctionDefinition}).
\end{lemma}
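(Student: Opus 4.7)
The plan is to reduce the statement to an elementary pre-limit identity and then pass to the limit, using continuity of the functionals involved in the dual Sobolev space.

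First I would observe that by Lemma \ref{CompactLemmaXi} the signed measures $\mu_t^N$ and $\bar{\mu}_t$ are both supported in $K = [-C_o, C_o]^{1+d}$, hence so is their rescaled difference $\eta_t^N = \sqrt{N}(\mu_t^N - \bar{\mu}_t)$. By construction of the bump function (see the discussion following Lemma \ref{CompactLemmaXi}, with $r = \sqrt{D}\,C_o$), we have $b(c,w) = 1$ for every $(c,w)$ with $\|(c,w)\| \leq r$, and since any point of $K$ satisfies this inequality, $b \equiv 1$ on $K$. Consequently the function $g := f - b f$ vanishes identically on $K$, and integrating against the compactly supported signed measure $\eta_t^N$ yields
\begin{equation*}
\la f - b f,\, \eta_t^N \ra \;=\; \int_{\mathbb{R}^{1+d}} (f - b f)\, d\eta_t^N \;=\; 0
\end{equation*}
for every $N \in \mathbb{N}$ and every $t \in [0,T]$.

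Next I would verify that both $f$ and $b f$ are legitimate test functions in $W_0^{J,2}(\Theta)$. For $f$ this is an assumption, and for $b f$ it follows from $b \in C_c^{\infty}(\Theta)$ together with the standard fact that multiplication by a compactly supported smooth function is a bounded operator on $W_0^{J,2}(\Theta)$. Therefore the map $\phi \mapsto \la f - b f,\, \phi \ra$ is a bounded linear functional on $W^{-J,2}(\Theta)$, and so a continuous map $W^{-J,2}(\Theta) \to \mathbb{R}$.

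Finally I would pass to the limit. By Lemma \ref{RelativeCompactnessLemma} and Lemma \ref{LemmaIdentifyLimit}, along a convergent subsequence $\eta^N \Rightarrow \bar{\eta}$ in $D_{W^{-J,2}}([0,T])$, and by Lemma \ref{L:ContinuityOfETA} the limit has continuous sample paths. Hence for every fixed $t \in [0,T]$, the evaluation $\eta^N_t \Rightarrow \bar{\eta}_t$ holds in $W^{-J,2}(\Theta)$. Composing with the continuous linear functional above gives
\begin{equation*}
0 \;=\; \la f - b f,\, \eta_t^N \ra \;\Rightarrow\; \la f - b f,\, \bar{\eta}_t \ra,
\end{equation*}
so the limit random variable is almost surely zero, which is precisely the claim. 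There is no substantive obstacle here; the only mild technicality is confirming that multiplication by $b$ preserves $W_0^{J,2}(\Theta)$ so that $b f$ can legitimately be paired with $\bar{\eta}_t$ in the dual pairing — this is a standard Leibniz-rule argument using $b \in C_c^{\infty}$.
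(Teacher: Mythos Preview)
Your proof is correct and follows essentially the same approach as the paper: establish the pre-limit identity $\la f-bf,\eta_t^N\ra=0$ from the compact support of $\eta_t^N$, then pass to the limit along a convergent subsequence. Your route via the continuity of the linear functional $\phi\mapsto\la f-bf,\phi\ra$ on $W^{-J,2}$ is slightly more direct than the paper's joint-convergence argument, and your explicit check that $bf\in W_0^{J,2}(\Theta)$ is in fact more careful than the paper's assertion that $bf\in C_c^\infty$ (which need not hold for general $f\in W_0^{J,2}$).
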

\begin{proof}
From Lemma \ref{CompactLemmaXi}, there exists a bump function $b(c,w)$ such that, for any $f \in W_0^{J,2}(\Theta)$ and every $t \in [0,T]$,
\begin{eqnarray}
\la f, \eta^N_t \ra = \la b f, \eta^N_t \ra.\notag
\end{eqnarray}
Furthermore, $b f \in C_c^{\infty}$.  Therefore, for all $N \in \mathbb{N}$,
\begin{eqnarray}
\sup_{t \in [0,T]} | \la f, \eta^N_t \ra - \la b f, \eta^N_t \ra | = 0.
\label{SupFBfiniteN}
\end{eqnarray}

Due to relative compactness, there is a sub-sequence
\begin{eqnarray}
\bigg{(} \la f, \eta^{N_k}_\cdot \ra, \la b f, \eta^{N_k}_\cdot \ra, \eta^{N_k}_{\cdot}, \sqrt{N} M^{N_k}_{\cdot} \bigg{)} \overset{d} \rightarrow \bigg{(} \la f, \bar \eta_{\cdot} \ra, \la b f, \bar \eta_{\cdot} \ra, \bar \eta_{\cdot}, \bar M_{\cdot} \bigg{)}.\notag
\end{eqnarray}
in $D_{\mathbb{R} \times \mathbb{R} \times W^{-J,2} \times W^{-J,2}}([0,T])$. Due to (\ref{SupFBfiniteN}), any limit point must satisfy $\la f, \bar\eta_t \ra = \la b f, \bar\eta_t \ra$ for each $t \in [0,T]$.

\end{proof}

Due to Lemma \ref{CompactLemmaEta}, we can re-write equation (\ref{DiffPhi2}) as
\begin{eqnarray}
 \la f, \Phi_t \ra^2 &\leq& \int_0^t \int_{\mathcal{X} \times \mathcal{Y}} \pi(dx,dy)  \bigg{[} \big{(} y - \la c \sigma(w x), \bar \mu_s \ra  \big{)} \la b(c,w)  c \sigma'(w x) x \cdot \nabla_{w}f, \Phi_s \ra \notag \\
 & &+ \big{(} y - \la c \sigma(w x), \bar \mu_s \ra  \big{)} \la b(c,w)\sigma(w x)  \partial_{c}f, \Phi_s \ra  \bigg{]}  \la f, \Phi_s \ra ds \notag \\
& &+ C \int_0^t \big{(}  \la f, \Phi_s \ra^2  + \norm{f}_{L}^2 \norm{ \Phi_s}_{-J}^2 \big{)} ds.
\label{DiffPhi3}
\end{eqnarray}

Let $\{ f_a \}_{a=1}^{\infty}$ be a complete orthonormal basis for $W_0^{J,2}$ where $J \geq 3 \ceil*{\frac{D}{2}} + 7$. Let $f = f_a$ in equation (\ref{DiffPhi3}) and then sum (\ref{DiffPhi2}) over all $a$. By Parseval's identity,

%

\begin{eqnarray}
\norm{ \Phi_t }_{-J}^2 &\leq& \int_0^t  \int \pi(dx,dy) \bigg{[} \big{(} y - \la c \sigma(w x), \bar \mu_s \ra  \big{)} \la \Phi_s, \mathcal{G}_1^{\ast} \Phi_s  \ra_{-J} + \big{(} y - \la c \sigma(w x), \bar \mu_s \ra  \big{)} \la \Phi_s, \mathcal{G}_2^{\ast} \Phi_s \ra_{-J}  \bigg{]}  ds \notag \\
& &+ C \int_0^t \norm{\Phi_s}_{-J}^2 ds.\notag
\end{eqnarray}

The operators $\mathcal{G}^1$ and $\mathcal{G}^2$ are defined in equation (\ref{operatorsG}). Since $\bar \mu_t$ takes values in a compact set and $\pi(dx,dy)$ has compact support,
\begin{eqnarray}
\norm{ \Phi_t }_{-J}^2 &\leq&  C_1 \int_0^t \int_{\mathcal{X} \times \mathcal{Y}} \pi(dx,dy) \bigg{(} \big{|} \la \Phi_s, \mathcal{G}_1^{\ast} \Phi_s  \ra_{-J} \big{|} + \big{|} \la \Phi_s, \mathcal{G}_2^{\ast} \Phi_s \ra_{-J} \big{|} \bigg{)}  ds \notag \\
& &+ C_2 \int_0^t \norm{\Phi_s}_{-J}^2 ds.\notag
\end{eqnarray}

Using Lemma \ref{LemmaBoundPsiG} and the fact that $\mathcal{X} \times \mathcal{Y}$ is a compact set,
\begin{eqnarray*}
\norm{ \Phi_t }_{-J}^2 &\leq&  C \int_0^t  \norm{\Phi_s}_{-J}^2 ds,
\end{eqnarray*}
which then by Gronwall's inequality gives $\norm{\Phi_t}_{-J}^2 = 0$ for $t \in [0,T]$. Thus, we have established the following result.

\begin{theorem} \label{TheoremUniquenessEtaBar}
Let $J\geq 3\ceil*{ \frac{D}{2} }+7$ with $D=d+1$. Then, the solution $\bar{\eta}$ to the stochastic evolution equation (\ref{SPDEeta}) is unique in $W^{-J,2}$.
\end{theorem}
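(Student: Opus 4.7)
The plan is a standard $L^2$--energy/Gronwall argument in the dual Sobolev norm $\norm{\cdot}_{-J}$, with the twist that the test function $c\sigma(w\cdot x)$ and its derivatives are not themselves in $W_0^{J,2}(\Theta)$, so we must invoke Lemma \ref{CompactLemmaEta} (the bump--function trick) before we can express the linear operators appearing in the drift as genuine bounded operators on $W_0^{J,2}$.

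First I would suppose there exist two solutions $\bar\eta^1,\bar\eta^2\in C_{W^{-J,2}}([0,T])$ to (\ref{SPDEeta}) driven by the same Gaussian noise $\bar M_t$ and set $\Phi_t=\bar\eta^1_t-\bar\eta^2_t$. Since (\ref{SPDEeta}) is linear in $\bar\eta$ and the noise cancels, $\Phi_t$ solves the deterministic evolution equation displayed just before the statement, with $\Phi_0=0$. The next step is to compute $\la f,\Phi_t\ra^2$ by the chain rule (as already written in the excerpt) and to bound each term. Using Young's inequality on the terms that involve the ``good'' factor $\la f,\Phi_s\ra$ paired with inner products against $\bar\mu_s$, together with the fact that $\bar\mu$ is compactly supported, $\pi(dx,dy)$ has compact support, and the Sobolev embedding $\sup_{c,w\in K}|D^\alpha f|\le C\|f\|_L$ with $L=\lceil D/2\rceil+3$, controls the ``easy'' terms by $C\int_0^t(\la f,\Phi_s\ra^2+\|f\|_L^2\|\Phi_s\|_{-J}^2)\,ds$.

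The nontrivial terms are of the form $\la c\sigma(w\cdot x),\Phi_s\ra\,\la c\sigma'(w\cdot x)x\cdot\nabla_w f,\Phi_s\ra$ and similar, which, after using $\la f,\Phi_s\ra$ as one factor, leave a pairing $\la c\sigma(w\cdot x)\,(\text{derivative of }f),\Phi_s\ra$. Here I would apply Lemma \ref{CompactLemmaEta} to replace the smooth but non--compactly supported symbols $c\sigma(w\cdot x),\ \sigma(w\cdot x),\ c\sigma'(w\cdot x)x$ by $b(c,w)$ times the same expressions, which are then elements of $C_0^\infty(\Theta)$. This allows rewriting the relevant bilinear forms as $\la\Phi_s,\mathcal{G}_i^{\ast}\Phi_s\ra_{-J}$ for the operators $\mathcal{G}_1,\mathcal{G}_2$ of (\ref{operatorsG}). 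Then I would choose an orthonormal basis $\{f_a\}_{a\ge1}$ of $W_0^{J,2}$, set $f=f_a$, sum over $a$, and apply Parseval's identity (using the Hilbert--Schmidt embedding $W_0^{J,2}\hookrightarrow W_0^{L,2}$ valid because $J-L>D/2$) to obtain
\begin{equation*}
\|\Phi_t\|_{-J}^2\le C_1\int_0^t\!\!\int_{\mathcal{X}\times\mathcal{Y}}\bigl(|\la\Phi_s,\mathcal{G}_1^{\ast}\Phi_s\ra_{-J}|+|\la\Phi_s,\mathcal{G}_2^{\ast}\Phi_s\ra_{-J}|\bigr)\pi(dx,dy)\,ds+C_2\int_0^t\|\Phi_s\|_{-J}^2\,ds.
\end{equation*}

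Finally I would invoke Lemma \ref{LemmaBoundPsiG} to bound $|\la\Phi_s,\mathcal{G}_i^{\ast}\Phi_s\ra_{-J}|\le C\|\Phi_s\|_{-J}^2$ for $i=1,2$ (this is precisely where the choice $J\ge J_1=2\lceil D/2\rceil+4$ is needed, and $J\ge 3\lceil D/2\rceil+7$ in the statement is comfortably more than enough). Combining the bounds reduces everything to $\|\Phi_t\|_{-J}^2\le C\int_0^t\|\Phi_s\|_{-J}^2\,ds$, whence Gronwall's inequality together with $\Phi_0=0$ yields $\|\Phi_t\|_{-J}=0$ for all $t\in[0,T]$, proving uniqueness. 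The main obstacle is purely notational: making sure the bump--function replacement is applied consistently so that every bilinear form we differentiate can legitimately be identified with the action of $\mathcal{G}_i^{\ast}$ on $W^{-J,2}$, since without this the estimates in Lemma \ref{LemmaBoundPsiG} do not apply.
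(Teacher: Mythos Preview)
Your proposal is correct and follows essentially the same route as the paper: difference of two solutions, chain rule for $\la f,\Phi_t\ra^2$, Young's inequality and (\ref{SupToJnorm}) for the terms where the derivative of $f$ is paired against $\bar\mu_s$, Lemma \ref{CompactLemmaEta} to insert the bump function, summation over an orthonormal basis of $W_0^{J,2}$ with Parseval and the Hilbert--Schmidt embedding, Lemma \ref{LemmaBoundPsiG} for the $\mathcal{G}_i^\ast$ terms, and Gronwall. One small slip: the ``nontrivial terms'' are not of the form $\la c\sigma(w\cdot x),\Phi_s\ra\,\la c\sigma'(w\cdot x)x\cdot\nabla_w f,\Phi_s\ra$ (no such bilinear-in-$\Phi$ term occurs in the equation for $\Phi$); the hard terms are $(y-\la c\sigma(w\cdot x),\bar\mu_s\ra)\,\la c\sigma'(w\cdot x)x\cdot\nabla_w f,\Phi_s\ra\,\la f,\Phi_s\ra$ and its $\partial_c$ analogue, exactly as in (\ref{DiffPhi2})--(\ref{DiffPhi3}), and these are what become $\la\Phi_s,\mathcal{G}_i^\ast\Phi_s\ra_{-J}$ after the bump-function replacement and Parseval.
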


\section{Proof of the Main Result} \label{ProofOfMainResult}
We now collect our results and prove Theorem \ref{MainTheoremCLT}. By Lemma \ref{RelativeCompactnessLemma} we have that the sequence $(\mu^{N}_t, \eta^{N}_{t},\sqrt{N}M^{N}_{t})$ is relatively compact in $D_{\mathcal{M}(\mathbb{R}^{1 + d}) \times W^{-J,2}\times W^{-J,2}}  ([0,T]) $.
 Lemma \ref{LemmaIdentifyLimit} establishes that the limit point satisfies the SPDE (\ref{SPDEmain}) and Theorem \ref{TheoremUniquenessEtaBar} proves that limit point is unique. Therefore, by Prokhorov's Theorem,  $\eta^N \overset{d} \rightarrow  \bar \eta$ in $D_{W^{-J,2}}([0,T])$ where $\bar \eta$ satisfies the stochastic evolution equation (\ref{SPDEmain}).

\section{Conclusion} \label{Conclusion}

Neural networks are nonlinear  models whose parameters are estimated from data using stochastic gradient descent.  They have achieved immense practical success over the past decade in a variety of applications in image, speech, and text recognition. However, there is limited mathematical understanding of their properties. This paper studies neural networks with a single hidden layer in the asymptotic regime of large network sizes and large numbers of stochastic gradient descent iterations. We rigorously prove a central limit theorem (CLT) for the empirical distribution of the neural network parameters. The limiting fluctuations process satisfies a stochastic partial differential equation and has Gaussian distribution.

\appendix
\section{Proof of Lemma \ref{L:RemainderTerms}}\label{A:Appendix0}
\begin{proof}[Proof of Lemma \ref{L:RemainderTerms}]
Let us recall that $\tilde R_t^{1,N}$ is the remainder term
\begin{eqnarray}
\tilde R_t^{1,N} = \sum_{k =0}^{\floor*{Nt}-1} \bigg{(} \big{(} \la f, \Xi_{\frac{k+1}{N}^{-}}^N \ra  + G_k^N  N^{-3/2} \big{)}^2  - \la f, \Xi^N_{\frac{k+1}{N}^{-}} \ra^2  \bigg{)},\notag
\end{eqnarray}
where $|G_k^N| < C \displaystyle \sum_{|\alpha| =2 } \sup_{c,w \in K} |D^{\alpha} f(c,w)  |$ due to the bound $| c_k^i| + \norm{w_k^i} < C_{o}$ and $\pi(dx,dy)$ having compact support. $K \subset \mathbb{R}^{1+d}$ is a compact set.

By the Sobolev embedding Theorem (Theorem 6.2 in \cite{Adams}), we have that
\begin{eqnarray}
 \sum_{|\alpha| \leq 2 } \sup_{c,w \in K} |D^{\alpha} f(c,w)  |  \leq   C\norm{ f}_{L}\nonumber
\end{eqnarray}
where $L = \ceil*{\frac{D}{2}} + 3$.

Therefore,
\begin{eqnarray}
| \tilde R_t^{1,N} | &\leq&  \frac{C_1 \norm{f}_L}{N} \sum_{k =0}^{\floor*{Nt}-1} \left|\la f,  \mu^N_{\frac{k}{N}}  - \tilde \mu^N_{\frac{k+1}{N}} \ra\right|  + C_2 N^{-2} \norm{f}_L^2  \notag \\
&\leq&  \frac{C_1 \norm{f}_L}{N} \sum_{k =0}^{\floor*{Nt}-1}  \sup_{c,w \in K} |  f(c,w)  | + C_2 N^{-2} \norm{f}_L \notag \\
&\leq& C_1  \norm{f}_L ^2+  C_2 N^{-2} \norm{f}_L^2  \notag \\
&\leq& C \norm{f}_L^2.\nonumber
\end{eqnarray}

$\tilde R_t^{2,N}$ is the remainder term:
\begin{align}
\tilde R_t^{2,N} &= - 2\sqrt{N} \int_{ \frac{ \floor*{Nt} }{N} }^t \int_{\mathcal{X}\times\mathcal{Y}}   \alpha \la f, \Xi_s^N \ra \big{(} y -  \la c \sigma(w \cdot x),  \mu^N_s \ra \big{)} \la \sigma(w \cdot x) \partial_c f, \mu_t^N \ra \pi(dx,dy) ds \notag \\
&- 2\sqrt{N} \int_{ \frac{ \floor*{Nt} }{N} }^t \int_{\mathcal{X}\times\mathcal{Y}}   \alpha  \la f, \Xi_s^N \ra \big{(} y -  \la c \sigma(w \cdot x),  \mu^N_t \ra \big{)} \la c \sigma'(w \cdot x) x \cdot \nabla_w f, \mu_t^N \ra \pi(dx,dy) ds\nonumber
\end{align}

Using Young's inequality, compactness of $\mathcal{X} \times \mathcal{Y}$, and the bound (\ref{UniformBoundfromLLNpaper}),
\begin{align}
| \tilde R_t^{2,N}  |  &\leq C  \int_{ \frac{ \floor*{Nt} }{N} }^t \int_{\mathcal{X}\times\mathcal{Y}}    \la f, \Xi_s^N \ra^{2} \pi(dx,dy) ds+ C N  \int_{ \frac{ \floor*{Nt} }{N} }^t \int_{\mathcal{X}\times\mathcal{Y}} \big{(} y -  \la c \sigma(w \cdot x),  \mu^N_s \ra \big{)}^{2} \la \sigma(w \cdot x) \partial_c f, \mu_t^N \ra^{2} \pi(dx,dy) ds \notag \\
&\quad+ C N \int_{ \frac{ \floor*{Nt} }{N} }^t \int_{\mathcal{X}\times\mathcal{Y}} \big{(} y -  \la c \sigma(w \cdot x),  \mu^N_t \ra \big{)}^{2} \la c \sigma'(w \cdot x) x \cdot \nabla_w f, \mu_t^N \ra^{2} \pi(dx,dy) ds\notag\\
&\leq C_{1}  \int_{ \frac{ \floor*{Nt} }{N} }^t   \la f, \Xi_s^N \ra^{2} ds+ C_{2} N  \int_{ \frac{ \floor*{Nt} }{N} }^t  \sum_{|\alpha| = 1 } \sup_{c,w \in K} |D^{\alpha} f(c,w)  |^{2} ds\notag\\
&\leq C_1 \int_{ \frac{ \floor*{Nt} }{N} }^t  \la f, \Xi_s^N \ra^2 ds + C_2  N \int_{ \frac{ \floor*{Nt} }{N} }^t  \norm{f}_L^2 ds \notag \\
&\leq  C_1 \int_{0}^t  \la f, \Xi_s^N \ra^2 ds + C_2 \norm{f}_L^2.\nonumber
\end{align}

Hence, we have obtained that
\begin{align}
|\tilde R_t^{1,N}|+| \tilde R_t^{2,N}  |  &\leq   C_1 \int_{0}^t  \la f, \Xi_s^N \ra^2 ds + C_2 \norm{f}_L^2.\nonumber
\end{align}
which is (\ref{Eq:BoundRTerms}). We then notice that
\begin{eqnarray}
&\phantom{.}&  \sum_{k =0}^{\floor*{Nt}-1} \mathbb{E} \bigg{[}   \sqrt{N} \la f, \Xi^N_{\frac{k+1}{N}^{-}} \ra  \la f, M^{1,N}_k + M^{2,N}_k \ra \bigg{]} \notag \\
&=& N  \sum_{k =0}^{\floor*{Nt}-1} \mathbb{E} \bigg{[}     \la f,  \mu_{ \frac{k+1}{N}^{-} }^N - \tilde \mu_{ \frac{k+1}{N}}^N  \ra  \la f, M^{1,N}_k + M^{2,N}_k \ra  \bigg{]}  \notag \\
  &=& N  \sum_{k =0}^{\floor*{Nt}-1} \mathbb{E} \bigg{[}    \la f,  \nu_{ k}^N  \ra \mathbb{E} \bigg{[}   \la f, M^{1,N}_k + M^{2,N}_k \ra \bigg{|} \mathcal{F}_k^{N}  \bigg{]} \bigg{]} \notag \\
  & &- N  \sum_{k =0}^{\floor*{Nt}-1} \mathbb{E} \bigg{[} \mathbb{E} \bigg{[}     \la f, \tilde  \mu_{ \frac{k+1}{N} }^N  \ra \mathbb{E} \bigg{[}   \la f, M^{1,N}_k + M^{2,N}_k \ra \bigg{|} \mathcal{F}_k^{N}  \bigg{]} \bigg{|} \mathcal{F}_0^{N} \bigg{]} \bigg{]}  \notag \\
  &=& 0,
\end{eqnarray}
where 
 $\mathcal{F}_k^N$ is the $\sigma-$algebra generated by $(c^{i}_{0},w^{i}_{0})_{i=1}^{N}$ and $(x_{j}, y_{j})_{j=0}^{k-1}$. In the fourth line we use the conditional independence of $\la f, M^{1,N}_k+  M^{2,N}_k\ra$ and $\tilde \mu^N_{ \frac{k+1}{N}}$ given the initial values $\{ w^i_0 , c^i_0 \}_{i=1}^N$. Also, since $\mu_t^N$ only changes at discrete times due to jumps, $\mu_{ \frac{k+1}{N}^{-} }^N = \nu_{k}^N$.

We have also used the fact that the conditional expectation
\begin{eqnarray}
\mathbb{E} \bigg{[}   \la f, M^{1,N}_k \ra \bigg{|} \mathcal{F}_k^{N}  \bigg{]}  &=&  \mathbb{E} \bigg{[}  \frac{1}{N} \alpha \big{(} y_k -  \la c \sigma(w \cdot x_k),  \nu^N_k \ra \big{)} \la \sigma(w \cdot x_k) \partial_c f, \nu_k^N \ra   - D^{1,N}_k \bigg{|} \mathcal{F}_k^{N} \bigg{]} \notag \\
&=&  \frac{\alpha}{N^2} \sum_{i=1}^N   \mathbb{E} \bigg{[} \bigg{(} \big{(} y_k -  \la c \sigma(w \cdot x_k),  \nu^N_k \ra \big{)}  \sigma(w^i_k \cdot x_k) \partial_c f(c^i_k, w^i_k) \notag \\
& &-  \int_{\mathcal{X}\times\mathcal{Y}}  \big{(} y -  \la c \sigma(w \cdot x),  \nu^N_k \ra \big{)}  \sigma(w^i_k \cdot x) \partial_c f(c^i_k, w^i_k) \pi(dx,dy)  \bigg{)} \bigg{|} \mathcal{F}_k^{N} \bigg{]} \notag \\
&=& 0.\nonumber
\end{eqnarray}
Similarly, $\mathbb{E} \bigg{[}   \la f, M^{2,N}_k \ra \bigg{|} \mathcal{F}_k^{N}  \bigg{]} = 0$.

 Now we can treat the term $\mathbb{E} \bigg{[}  \sum_{k =0}^{\floor*{Nt}-1} \bigg{(} \big{(} \la f, \Xi^N_{\frac{k+1}{N}^{-}}   + \sqrt{N} M^{1,N}_k + \sqrt{N} M^{2,N}_k \ra\big{)}^2  - \la f, \Xi^N_{ \frac{k+1}{N}^{-}} \ra^2  \bigg{)} \bigg{]}$ from (\ref{Ito}) and get
\begin{eqnarray}
&\phantom{.}& \mathbb{E} \bigg{[}  \sum_{k =0}^{\floor*{Nt}-1} \bigg{(} \big{(} \la f, \Xi^N_{\frac{k+1}{N}^{-}}   + \sqrt{N} M^{1,N}_k + \sqrt{N} M^{2,N}_k \ra\big{)}^2  - \la f, \Xi^N_{ \frac{k+1}{N}^{-}} \ra^2  \bigg{)} \bigg{]}  \notag \\
&=& \mathbb{E} \bigg{[}  \sum_{k =0}^{\floor*{Nt}-1} \bigg{(}  2\sqrt{N} \la f, \Xi^N_{\frac{k+1}{N}^{-}} \ra  \la f, M^{1,N}_k + M^{2,N}_k \ra  + N \la f, M^{1,N}_k + M^{2,N}_k \ra^2 \bigg{)}   \bigg{]} \notag \\
&=&  \mathbb{E} \bigg{[} N \sum_{k =0}^{\floor*{Nt}-1} \la f, M^{1,N}_k + M^{2,N}_k \ra ^2   \bigg{]}  \notag \\
&=&\alpha^2 \mathbb{E} \bigg{[} \frac{1}{N}  \sum_{k =0}^{\floor*{Nt}-1}  \bigg{(}   \big{(} y_k -  \la c \sigma(w \cdot x_k),  \nu^N_k \ra \big{)} \la \sigma(w \cdot x_k) \partial_c f, \nu_k^N \ra  \notag \\
& &+ \big{(} y_k -  \la c \sigma(w \cdot x_k),  \nu^N_k \ra \big{)} \la c  \sigma'(w \cdot x_k) x \cdot \nabla_w f, \nu_k^N \ra \notag \\
& &- \int_{\mathcal{X}\times\mathcal{Y}}   \big{(} y -  \la c \sigma(w \cdot x),  \nu^N_k \ra \big{)} \la \sigma(w \cdot x) \partial_c f, \nu_k^N \ra \pi(dx,dy)  \notag \\
& &- \int_{\mathcal{X}\times\mathcal{Y}}   \big{(} y -  \la c \sigma(w \cdot x),  \nu^N_k \ra \big{)} \la c  \sigma'(w \cdot x) x \cdot \nabla_w f, \nu_k^N \ra  \pi(dx,dy)   \bigg{)}^2      \notag \\
&<& C  \big{(} \sum_{|\alpha| =1 } \sup_{c,w \in K} |D^{\alpha} f(c,w)  | \big{)}^2 \leq C \norm{f}_L^2.\nonumber
\end{eqnarray}
which is (\ref{Mbound}) concluding the proof of the lemma.
\end{proof}
\section{Auxiliary lemmas}\label{A:Appendix1}

\begin{lemma} \label{LemmaBoundPsi}
Let $0\leq k\leq J$. If $\Psi \in C_0^{\infty}(\Theta)$, $g \in C^{\infty}_0( \Theta)$, then, there exists a  constant $C<\infty$ such that
\begin{eqnarray}
\int_{\Theta} D^k \bigg{[}  g \frac{\partial \Psi}{\partial w} \bigg{]} D^k \Psi dc dw \leq C \norm{ \Psi}_J^2.
\end{eqnarray}
\end{lemma}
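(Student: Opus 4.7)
The plan is to expand $D^{k}[g\,\partial_{w}\Psi]$ via the Leibniz rule, isolate the single term in which all $k$ derivatives fall on $\partial_{w}\Psi$ (this is the only term of ``bad'' differential order $k+1$), and handle that term by integration by parts; all remaining terms are controlled directly by Cauchy--Schwarz.

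More precisely, by Leibniz,
\begin{eqnarray*}
D^{k}\bigl[g\,\partial_{w}\Psi\bigr] = \sum_{j=0}^{k} \binom{k}{j} D^{k-j}g \cdot D^{j}\partial_{w}\Psi.
\end{eqnarray*}
For $0\leq j\leq k-1$, the factor $D^{j}\partial_{w}\Psi$ involves at most $k$ derivatives of $\Psi$, and $D^{k-j}g$ is bounded uniformly (since $g\in C^{\infty}_{0}(\Theta)$). Cauchy--Schwarz in $L^{2}(\Theta)$ together with $k\le J$ then yields
\begin{eqnarray*}
\left|\int_{\Theta} D^{k-j}g\cdot D^{j}\partial_{w}\Psi\cdot D^{k}\Psi\,dc\,dw\right| \leq C\,\|\Psi\|_{J}^{2}
\end{eqnarray*}
for each such $j$, with $C$ depending on sup-norms of $g$ and its derivatives up to order $J$ on the compact support of $g$.

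The only remaining contribution is the $j=k$ term, namely
\begin{eqnarray*}
\int_{\Theta} g \cdot D^{k}\partial_{w}\Psi \cdot D^{k}\Psi \,dc\,dw = \tfrac{1}{2}\int_{\Theta} g\,\partial_{w}\bigl[(D^{k}\Psi)^{2}\bigr]\,dc\,dw,
\end{eqnarray*}
using that $D^{k}$ and $\partial_{w}$ commute and that $2\,(D^{k}\Psi)\,\partial_{w}(D^{k}\Psi)=\partial_{w}(D^{k}\Psi)^{2}$. Since $g\in C^{\infty}_{0}(\Theta)$, $g$ vanishes near $\partial\Theta$, so integration by parts in $w$ produces no boundary terms and gives
\begin{eqnarray*}
\tfrac{1}{2}\int_{\Theta} g\,\partial_{w}(D^{k}\Psi)^{2}\,dc\,dw = -\tfrac{1}{2}\int_{\Theta} \partial_{w}g\cdot (D^{k}\Psi)^{2}\,dc\,dw,
\end{eqnarray*}
whose absolute value is at most $\tfrac{1}{2}\|\partial_{w}g\|_{\infty}\,\|D^{k}\Psi\|_{L^{2}}^{2}\leq C\|\Psi\|_{J}^{2}$.

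Summing the lower-order estimates with the integration-by-parts bound on the top-order term yields the claim. The only subtle point is the $j=k$ term, where naive Cauchy--Schwarz would require $k+1$ derivatives on $\Psi$ (i.e., control by $\|\Psi\|_{J+1}$, which is not available); this is precisely what the integration by parts sidesteps. The constant $C$ depends on $\|g\|_{C^{J}(\overline{\Theta})}$ and on $\Theta$, but not on $\Psi$. By density, the bound extends to $\Psi\in W_{0}^{J,2}(\Theta)$ as needed in the applications above.
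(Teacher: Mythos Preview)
Your proof is correct and follows essentially the same strategy as the paper: expand by the Leibniz rule, bound all lower-order terms directly (the paper uses Young's inequality where you use Cauchy--Schwarz, which is immaterial), and remove the single $(k+1)$-order term by integration by parts in $w$. Your handling of the top-order term via the identity $g\,\partial_{w}(D^{k}\Psi)\cdot D^{k}\Psi=\tfrac{1}{2}g\,\partial_{w}[(D^{k}\Psi)^{2}]$ is in fact a slightly more direct version of the paper's argument, which integrates by parts, re-expands, and recovers the original integral with the opposite sign to reach the same conclusion; just be aware that $D^{k}$ here denotes a fixed mixed partial $\partial_{c}^{k_{1}}\partial_{w}^{k_{2}}$, so your one-variable Leibniz notation should be read as shorthand for the multi-index version.
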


\begin{proof}[Proof of Lemma \ref{LemmaBoundPsi}]
We prove the statement for $d =1$. The algebra for $d>1$ is similar, albeit more tedious.  Let $k = k_1 + k_2$ with $k_1,k_2\geq 0$ arbitrarily chosen.
\begin{eqnarray}
 \int_{\Theta} D^k \bigg{[}  g \frac{\partial \Psi}{\partial w} \bigg{]} D^k \Psi dc dw
&=& \int_{\Theta} \frac{\partial^k}{\partial c^{k_1} \partial w^{k_2}} \bigg{[}  g \frac{\partial \Psi}{\partial w} \bigg{]} D^k \Psi dc dw  \notag \\
&=& \sum_{ \substack{ \alpha_1 + \alpha_2 = k+1, \alpha_2 \leq k \\ i_1 + i_2 = k_1, \\ j_1 + j_2 = k_2 }} \int_{\Theta} \frac{\partial^{\alpha_1} g}{\partial c^{i_1} \partial w^{j_1}} \frac{\partial^{\alpha_2} \Psi }{ \partial c^{i_2} \partial w^{j_2}} D^k \Psi dc dw \notag \\
&+&  \int_{\Theta} g   \frac{\partial}{\partial w} \bigg{[} \frac{\partial^k \Psi}{\partial c^{k_1} \partial w^{k_2}} \bigg{]} D^k \Psi dc dw
\label{SubStackExpression}
\end{eqnarray}

Since $g \in C^{\infty}_{0}(\bar \Theta)$ and using Young's inequality,
\begin{eqnarray}
 \sum_{ \substack{ \alpha_1 + \alpha_2 = k+1, \alpha_2 \leq k \\ i_1 + i_2 = k_1, \\ j_1 + j_2 = k_2 }} \int_{\Theta} \frac{\partial^{\alpha_1} g}{\partial c^{i_1} \partial w^{j_1}} \frac{\partial^{\alpha_2} \Psi }{ \partial c^{i_2} \partial w^{j_2}} D^k \Psi dc dw
&\leq& C \sum_{ \substack{ \alpha_1 + \alpha_2 = k+1, \alpha_2 \leq k \\ i_1 + i_2 = k_1, \\ j_1 + j_2 = k_2 }} \int_{\Theta}  \bigg{|} \frac{\partial^{\alpha_2} \Psi }{ \partial c^{i_2} \partial w^{j_2}} \bigg{|} \bigg{|} D^k \Psi \bigg{|} dc dw \notag \\
&\leq& C \sum_{ \substack{ \alpha_1 + \alpha_2 = k+1, \alpha_2 \leq k \\ i_1 + i_2 = k_1, \\ j_1 + j_2 = k_2 }} \int_{\Theta}  \bigg{(}  \bigg{|} \frac{\partial^{\alpha_2} \Psi }{ \partial c^{i_2} \partial w^{j_2}} \bigg{|}^2 + \bigg{|} D^k \Psi \bigg{|}^2 \bigg{)} dc dw \notag \\
&\leq& C  \norm{ \Psi}_J^2.
\label{BoundSubStack}
\end{eqnarray}


Therefore, we have
\begin{eqnarray}
 \int_{\Theta} D^k \bigg{[}  g \frac{\partial \Psi}{\partial w} \bigg{]} D^k \Psi dc dw
&\leq& C_1 \norm{ \Psi}_J^2 +   \int_{\Theta} g   \frac{\partial}{\partial w} \bigg{[} \frac{\partial^k \Psi}{\partial c^{k_1} \partial w^{k_2}} \bigg{]} \frac{\partial^k \Psi}{\partial c^{k_1} \partial w^{k_2}} dc dw \notag \\
&=& C_2 \norm{ \Psi}_J^2 -   \int_{\Theta} \frac{\partial^k \Psi}{\partial c^{k_1} \partial w^{k_2}}\frac{\partial}{\partial w} \bigg{[} g \frac{\partial^k \Psi}{\partial c^{k_1} \partial w^{k_2}} \bigg{]} dc dw.\notag
\end{eqnarray}

The inequality on line 2 follows from the bound (\ref{BoundSubStack}).  The third line follows from integration by parts and the fact that $g \in C_0^{\infty}(\Theta)$.

We next consider the other term
\begin{eqnarray}
\frac{\partial}{\partial w} \bigg{[} g \frac{\partial^k \Psi}{\partial c^{k_1} \partial w^{k_2}} \bigg{]} &=& g D^k \bigg{[} \frac{\partial \Psi}{\partial w} \bigg{]} + \frac{\partial g}{\partial w}(c,w) D^k \Psi \notag \\
&=& D^k \bigg{[} g   \frac{\partial \Psi}{\partial w} \bigg{]}   + \frac{\partial g}{\partial w}(c,w) D^k \Psi-  \sum_{ \substack{ \alpha_1 + \alpha_2 = k+1, \alpha_2 \leq k \\ i_1 + i_2 = k_1, \\ j_1 + j_2 = k_2 }}  \frac{\partial^{\alpha_1} g}{\partial c^{i_1} \partial w^{j_1}} \frac{\partial^{\alpha_2} \Psi }{ \partial c^{i_2} \partial w^{j_2}},\notag
\end{eqnarray}
where the last term is from (\ref{SubStackExpression}). Now, by applying the same approach as in (\ref{BoundSubStack}), i.e. using Young's inequality and $g \in C_{0}^{\infty}(\bar \Theta)$, we have the bound
\begin{eqnarray}
 \int_{\Theta} D^k \bigg{[}  g \frac{\partial \Psi}{\partial w} \bigg{]} D^k \Psi dc dw
&\leq& C_2 \norm{ \Psi}_J^2 -   \int_{\Theta} \frac{\partial^k \Psi}{\partial c^{k_1} \partial w^{k_2}}\frac{\partial}{\partial w} \bigg{[} g \frac{\partial^k \Psi}{\partial c^{k_1} \partial w^{k_2}} \bigg{]} dc dw   \notag \\
&\leq& C \norm{\Psi}_J^2 -  \int_{\Theta} D^k \bigg{[}  g \frac{\partial \Psi}{\partial w} \bigg{]} D^k \Psi dc dw.\notag
\end{eqnarray}

Rearranging, we have that there is a constant $C<\infty$ (different than above)
\begin{eqnarray}
 \int_{\Theta} D^k \bigg{[}  g(c,w) \frac{\partial \Psi}{\partial w} \bigg{]} D^k \Psi dc dw  \leq C \norm{ \Psi}_J^2.\notag
\end{eqnarray}

\end{proof}

\end{document}